\newtheorem{example}{Example}[section]
\newtheorem{remark}{Remark}[section]
\newcommand{\R}{\mathbb R}
\newcommand{\Cplx}{\mathbb C}
\newcommand{\Cauchy}{{\mathscr{C}}}
\newcommand{\Hardy}{\mathcal{H}}
\newcommand{\pol}{\lambda}
\newcommand{\cc}[1]{\overline{#1}}
\newcommand{\mb}[1]{\left(\begin{array}{#1}}
\newcommand{\me}{\end{array}\right)}
\newcommand{\eqb}{\begin{equation}}
\newcommand{\eqe}{\end{equation}}
\newcommand{\eqab}{\begin{eqnarray}}
\newcommand{\eqae}{\end{eqnarray}}
\newcommand{\bfgamma}{{\boldsymbol\gamma}}
\newcommand{\bflambda}{{\boldsymbol\lambda}}
\newcommand{\bfpol}{{\boldsymbol\lambda}}
\newcommand{\bfxi}{{\boldsymbol\xi}}
\newcommand{\bfPhi}{{\boldsymbol\Phi}}
\newcommand{\bfvarphi}{{\boldsymbol\varphi}}
\newcommand{\bfeps}{{\boldsymbol\epsilon}}
\def\roff  {\mbox{\boldmath$\varepsilon$}}
\newcommand{\qw}{\rho}
\newcommand{\dqw}{{\mathcal D}_\rho}
\newcommand{\respm}{\bfPhi}
\newcommand{\dres}{\varphi}
\newcommand{\Id}{\mathbb I}
\newcommand{\HT}{\mathbb{S}}
\newcommand{\bfH}{ {\mathbf H} }
\newcommand{\bfB}{ {\mathbf B} }
\newcommand{\bfN}{ {\mathbf N} }
\newcommand{\bfM}{ {\mathbf M} }
\newcommand{\IR}{ {\mathbb R} }
\newcommand{\IC}{ {\mathbb C} }
\def\bfcE{\mbox{\boldmath$\mathcal{E}$}}
\newcommand{\imunit}{{\dot{\imath\!\imath}}}
\newcommand{\bfS}{ {\mathbf S} }
\numberwithin{equation}{section}\numberwithin{equation}{section}
\def\bdgfit{\mbox{\textsf{mimoFIT}}}
\title{Vector Fitting for Matrix-valued Rational Approximation}
\author{Z.~Drma\v{c}\thanks{Faculty of Science, Department of Mathematics, University of Zagreb,
Bijeni\v{c}ka 30, 10000 Zagreb, Croatia. The work of this author was supported by the grant
HRZZ-9345 from the Croatian Science Foundation.} \and S.~Gugercin\thanks{Department of Mathematics,
Virginia Polytechnic Institute and State University,
460 McBryde, Virginia Tech,
Blacksburg, VA 24061-0123. The work of C. Beattie and S. Gugercin was supported in
part by NSF through Grant DMS-1217156.} \and C.~Beattie$^\dagger$}
\begin{document}
\maketitle

\begin{abstract}
Vector Fitting (\textsf{VF}) is a popular method of constructing rational approximants that provides a least squares fit to frequency response measurements. In an earlier work, we provided an analysis of \textsf{VF} for scalar-valued rational functions and established a connection with optimal $\Hardy_2$ approximation.   We build on this work and extend the previous framework to include the construction of effective rational approximations to \emph{matrix-valued} functions, a problem which presents significant challenges that do not appear in the scalar case.  Transfer functions associated with multi-input/multi-output (MIMO) dynamical systems typify the class of functions that we consider here.     
Others have also considered extensions of \textsf{VF} to matrix-valued functions and related numerical implementations are readily available.  However to our knowledge, a detailed analysis of numerical issues that arise does not yet exist.  We offer such an analysis  including critical implementation details here.  

 One important issue that arises for \textsf{VF} on matrix-valued functions that has remained largely unaddressed is the control of the McMillan degree of the resulting rational approximant; the McMillan degree can grow very high in the case of large input/output dimensions.   We introduce two new mechanisms for controlling the McMillan degree of the final approximant, one based on alternating least-squares minimization and one based on ancillary system-theoretic reduction methods.
  Motivated in part by our earlier work on the scalar \textsf{VF} problem as well as by recent innovations for computing optimal $\Hardy_2$ approximation, we establish a connection with optimal $\Hardy_2$ approximation, and are able to improve significantly the fidelity of  \textsf{VF} through numerical quadrature, with virtually no increase in cost or complexity.  We provide several numerical examples to support the theoretical discussion and proposed algorithms.

\end{abstract}

\begin{keywords}
least squares, frequency response, model order reduction, MIMO vector fitting, transfer function
\end{keywords}

\begin{AMS}
34C20, 41A05, 49K15, 49M05, 93A15, 93C05, 93C15
\end{AMS}

\section{Introduction}
Rational functions provide significant advantages over other classes of approximating functions, such as polynomials or trigonometric functions, that are important in the approximation of functions that occur in engineering and scientific applications.  \emph{Matrix-valued} rational functions offer substantial additional flexibility and broaden the domain of applicability by providing the potential for the interpolation and approximation of parameterized families of multi-dimensional observations.  
For example, in a variety of engineering applications, the dynamics arising from multi-input/multi-output (MIMO) dynamical systems may be inaccessible to direct modeling, yet input-output relationships often may be observed as a function of frequency, yielding 
an enormous amount of data.  In such cases, one may wish to deduce an \emph{empirical dynamical system model} nominally represented as a matrix-valued rational function, that fits the measured frequency response data.  This derived model may then be used as a surrogate in order to predict system behavior or to determine suitable control strategies.
See \cite{antoulas2005approximation,BGW2013,schilders2008model} for examples of rational approximation in action.

  
The \emph{McMillan degree} of a matrix-valued rational function, $\bfH(s)$, is the sum of pole multiplicities over the (extended) complex plane, or equivalently, the dimension of the state space in a minimal realization of $\bfH(s)$.
It is convenient to think of $\bfH(s)$ as a transfer function matrix associated with a stable MIMO linear time-invariant system having $m$ inputs and $p$ outputs (although this interpretation is not necessary for what follows); 
McMillan degree is a useful proxy for the level of complexity associated with $\bfH(s)$.   
Let $r>0$ be an integer denoting the desired McMillan degree for our approximant:
  $\bfH_r(s)=\bfN(s) / d(s)$, where $\bfN(s)$ is a $p\times m$ matrix having elements that are polynomials  in $s$ of order $r-1$ or less, and $d(s)$ is a (scalar) polynomial function having exact order $r$.   Denote by $\mathcal{R}_r$ the set of matrix-valued functions of this form.  Note that $\mathcal{R}_r$ consists of matrix-valued functions having entries that are strictly proper rational functions of order $r$;  the McMillan degree of $\bfH_r\in\mathcal{R}_r$ could range as high as $r\cdot\min(m,p)$, which could be significantly larger than our target, $r$.   
  We assume that observations (evaluations) of $\bfH(s)$ are available at predetermined points in the complex plane, $s=\xi_1,\,\ldots,\,\xi_{\ell}$.  
 Only observed values at these points: $\bfH(\xi_j)$, for $j=1,\ldots,\ell$, will be necessary 
 in order to derive our approximations.  
 We proceed by seeking a solution to
\begin{equation}\label{basicOptProb}
\min_{\bfH_r\in\mathcal{R}_r} \sum_{i=1}^\ell \qw_i \left\| \bfH_r(\xi_i) - \bfH(\xi_i) \right\|_F^2 \; .
\end{equation}

Sanathanan and Koerner \cite{Sanathanan-Koerner-1963} proposed an 
approach to solving (\ref{basicOptProb})  that produces a sequence of 
 rational matrix approximants $\bfH_r^{(k)}(s)$ having the form $\bfH_r^{(k)}(s)=\bfN^{(k)}(s) / d^{(k)}(s)$, where $\bfN^{(k)}(s)$ is a $p\times m$ matrix of polynomials of degree $r-1$ or less and  $d^{(k)}(s)$ is a (scalar-valued) polynomial of degree $r$. 
For each $k$, the coefficients of $\bfN^{(k)}$ and $d^{(k)}$ are adjusted by solving a weighted linear least squares problem with a weight determined by $d^{(k-1)}$.   An important reformulation of the Sanathanan-Koerner (\textsf{SK}) iteration was introduced by Gustavsen and Semlyen \cite{Gustavsen-Semlyen-1999}, which became known under the name \emph{Vector Fitting} (\textsf{VF}). The term ``vector fitting" is appropriate in light of the interpretation of the Frobenius norm, $\| \cdot \|_F$, that appears in (\ref{basicOptProb}) as a standard Euclidean vector norm,  $\| \cdot \|_2$, of an `unraveled' matrix listed column-wise as a vector in $\IC^{mp}$.  The Gustavsen-Semlyen \textsf{VF} method produces a similar sequence of rational approximants $\bfH_r^{(k)}(s)=\bfN^{(k)}(s) / d^{(k)}(s)$ but now with $\bfN^{(k)}$ and $d^{(k)}$ defined as rational functions represented in barycentric form.  Making central use of a clever change of representation at each step, the Gustavsen-Semlyen \textsf{VF} method achieves greater numerically stability and efficiency than the original Sanathanan-Koerner iteration.   

We describe both the \textsf{SK}  and \textsf{VF} iteration in \S \ref{S=Background} and make some observations that contribute to our analysis of it in \S \ref{S=NumericalDetails}.    In particular, we note that the change of representation implicit in the \textsf{VF} iteration can be related to a change of representation from barycentric to pole--residue form. This change of representation is made explicit in \S \ref{S=BC2PR}, where we provide formulas  \textcolor{black}{that appear to be new.} These formulas can be useful at any step of either the \textsf{SK} or \textsf{VF} iteration in order to estimate the contribution that each pole makes to the current approximation.  This, in turn, is useful in determining whether, $r$, the initial estimate of McMillan degree, is unnecessarily large relative to the information contained in the observed data.  
We note that many authors have applied, modified, and analyzed \textsf{VF}, see e.g. 
\cite{Gustavsen-2006}, \cite{Hendrickx-Dhaene-2006}, \cite{Deschrijver-Haegeman-Dhaene-2007},
\cite{Deschrijver-Gustavsen-2007}, \cite{Deschrijver-Mrozowski-Dhaene-Zutter-2008}, \cite{Deschrijver-Knockaert-Dhaene-2010}, \cite{Deschrijver-Knockaert-Dhaene-2013}. 
\textcolor{black}{A  \textsc{matlab} implementation \textsf{vecfit3} is provided at \cite{vectfit3} and is widely used.}
 When applied to MIMO problems, high fidelity rational approximations are sought and generally achieved at the expense of a relatively high McMillan degree for the final approximant. By way of contrast, the approaches we develop here are capable of providing systematic estimates to the McMillan degree of the original function, $\bfH(s)$, and can produce high fidelity rational approximations of any desired McMillan degree, to the extent possible. 

In \S \ref{S=NumericalDetails}, we analyze \textsf{VF} within a numerical linear algebra framework and discuss several important issues that are essential for a numerically sound implementation of the method. Although \textsf{VF} is based on successive solution of least squares (\textsf{LS}) problems, which is a well understood procedure, subtleties enter in the \textsf{VF} context.  We review commonly used numerical \textsf{LS} solution procedures in \S \ref{SS=LS+QR}  and point out some details that become significant in the \textsf{VF} setting.   In \S \ref{SS=noise}, we argue and illustrate by way of example that, as iterations proceed, the \textsf{VF} coefficient matrices in the pole identification phase tend to become noisy with a significant drop in column norms that can also coincide with a reduction in numerical rank. This prompts us to advise caution when rescaling columns in order to improve the condition number, since rescaling columns that have been computed through massive cancellations will preclude inferring an accurate numerical rank.


Ill-conditioning is intrinsic to rational approximation and 
manifested through the potentially high condition numbers of Cauchy matrices that naturally 
arise.  We introduce in \S \ref{SS=HALA} 
another approach toward curbing  ill-conditioning. 
Using recent results on high accuracy matrix computations, we show that successful computation is possible independent of the condition number of the underlying Cauchy matrix, and we open possibilities for application of Tichonov regularization and the Morozov discrepancy principle, which provide additional useful tools when working with noisy data.  In \S \ref{SSS:LessQRs}, we offer some suggestions for efficient software implementation and provide some algorithmic details that reduce computational cost.
{For the sake of brevity, we omit discussion of the algorithmic details that allow computation to proceed using only real arithmetic.}

 Typically,  the weights in (\ref{basicOptProb}) are $\qw_i=1$ and the sample points, $\xi_i$, are either uniformly or logarithmically spaced according to sampling expedience.   We do consider this case in detail, but also propose, in \S 4, an alternate strategy for choosing $\qw_i$ and $\xi_i$ that is guided by numerical quadrature. 
This connects the ``vector fitting" process with optimal $\Hardy_2$ approximation and follows up on our recent work, \cite{Drmac-Gugercin-Beattie:VF-2014-SISC}. Indeed, with proper choice of the nodes $\xi_i$ and the weights $\qw_i$, we can interpret the weighted \textsf{LS} approximation of (\ref{basicOptProb}) as rational approximation in a discretized $\Hardy_2$ norm in the Hardy space of matrix functions that are analytic in the open right half-plane. This leads to a significant improvement in the performance of the \textsf{VF} approximation and also sets the stage for a new post-processing stage proposed in \S \ref{sec_rank1_VFMIMO}, where we address the question of computing an approximation having predetermined McMillan degree. 
%

%
%

Rational data fitting has a long history (going back at least to Kalman \cite{Kalman-1958}) and continues to be studied in a variety of settings: For example, Gonnet, Pach\'{o}n, and Trefethen \cite{gonnet2011robust} recently provided a robust approach to rational approximation through linearized \textsf{LS} problems on the unit disk.  Hokanson in \cite{hokanson2013numerically} presented a detailed study of exponential fitting, which is closely related to rational approximation. The Loewner framework developed by Mayo and Antoulas \cite{Mayo-2007,lefteriu2010new} is an effective and numerically efficient method to 
construct rational interpolants directly from  measurements. This approach also has been extended successfully to parametric
\cite{AIL11,IonitaAntoulas2014}  and weakly nonlinear  \cite{Antoulas2014,Antoulas2015} problems.
In this paper, we focus solely on matrix-valued rational least-squares approximation by \textsf{VF}.

\section{Background and Problem Setting}\label{S=Background}
   
\textsf{VF} iteration is built upon the Sanathanan--Koerner (\textsf{SK}) procedure \cite{Sanathanan-Koerner-1963}, which we describe briefly as follows: 
A sequence of approximations having the form $\bfH_r^{(k)}=\bfN^{(k)}(s) / d^{(k)}(s) \in \mathcal{R}_r$, is developed by
taking $d^{(0)}(s)\equiv 1$, and then solving successively for 
 $\bfN^{(k+1)}$ and $d^{(k+1)}$ via the weighted linear least squares problem:
\begin{equation}\label{mimoSK}
\epsilon^{(k)} = \displaystyle \sum_{i=1}^\ell \frac{\qw_i}{|d^{(k)}(\xi_i)|^2}\left\| {\bfN^{(k+1)}(\xi_i) - d^{(k+1)}(\xi_i)\bfH(\xi_i)} \right\|_F^2 \longrightarrow \min.\;\ \mbox{ for } k=0,1,2,\ldots
\end{equation}
Since $\bfN^{(k)}$ and $d^{(k)}$ have a presumed affine dependence on parameters, (\ref{mimoSK}) does indeed define a linear least squares problem with respect to those parameters. 
%
Notice also that once $\bfN^{(k+1)}(s)$ and $d^{(k+1)}(s)$ have been computed, only $d^{(k+1)}(s)$ is 
used in the next iteration as a new weighting function. 
Although convergence of this process remains an open question,  
if the iterations do converge at least 
in the sense that, at some $k_*$, $d^{(k_*+1)}(s)$ ``is close to" $d^{(k_*)}(s)$ at the sample points $\xi_i$, 
the process is halted and we take $\bfH_r(s) = \bfN^{(k_*+1)}(s)/d^{(k_*+1)}(s)$.  
Note also that before this last step (i.e., for $k<k_*$), any effort to compute the value of $\bfN^{(k+1)}$ is wasted. 

The error in (\ref{mimoSK}) may be rewritten as
$
\epsilon^{(k_*)} =  \sum_{i=1}^\ell  \qw_i \left| d^{(k_*+1)}(\xi_i)/d^{(k_*)}(\xi_i)\right|^2\left\| {\bfH_r(\xi_i) - \bfH(\xi_i)}\right\|_F^2 , 
$
which corresponds to (\ref{basicOptProb}) up to an error that depends on the deviation of $|d^{(k_*+1)}/d^{(k_*)}|$ from $1$. 
This deviation becomes small as convergence occurs and can be associated with stopping criteria that we introduce 
in \S \ref{SS=StopCrit}.
If the iteration is halted prematurely, then the last step leaves
$d^{(k)}$ unchanged and redefines $\bfN^{(k+1)}$ as
the solution to the \textsf{LS} problem  $\sum_{i=1}^\ell \qw_i \| \bfN^{(k+1)}(\xi_i) /d^{(k)}(\xi_i) - \bfH(\xi_i)\|_F^2\rightarrow\min$.
The implementation of this procedure depends on specific choices for the parameterization of $\bfN^{(k)}(s)$
and $d^{(k)}(s)$ used in representing $\bfH_r^{(k)}(s)$; barycentric representations will offer clear advantage. 

\subsection{\textsf{VF}=\textsf{SK}+barycentric representation}
Suppose that at the $k$th iteration step, we choose a set of $r$ mutually distinct (but otherwise arbitrary) nodes, 
$\{\pol_j^{(k)}\}_{j=1}^r$.  Consider a barycentric representation for $\bfH_r^{(k)}(s)$:
\begin{equation}\label{eq:H_r(k)}
\bfH_r^{(k)}(s) = \frac{\bfN^{(k)}(s)}{d^{(k)}(s)} \equiv \frac{\sum_{j=1}^r \respm_j^{(k)}/(s-\pol_j^{(k)})}{1 + \sum_{j=1}^r \dres_j^{(k)}/(s-\pol_j^{(k)})},\;\;
\respm_j^{(k)}\in\Cplx^{p\times m},\;\; \dres_j^{(k)}, \pol_j^{(k)} \in\Cplx.
\end{equation}
Observe that if $\dres_j^{(k)}\neq 0$, $\bfH_r^{(k)}(\pol_j^{(k)})={\respm_j^{(k)}}/{\dres_j^{(k)}}$. 
On the other hand, if $\dres_j^{(k)}= 0$ then $\bfH_r^{(k)}(s)$ has a simple pole at $\pol_j^{(k)}$ with an associated residue given explicitly in (\ref{eq:residues}) of Proposition \ref{PROP1}.

 A new approximant is sought having the form, 
$$
\bfH_r^{(k+1)}(s) =  \frac{\sum_{j=1}^r \widehat{\respm}_j^{(k+1)}/(s-\pol_j^{(k)})}{1 + \sum_{j=1}^r \widehat{\dres}_j^{(k+1)}/(s-\pol_j^{(k)})},\;\;
\widehat{\respm}_j^{(k+1)}\in\Cplx^{p\times m},\;\; \widehat{\dres}_j^{(k+1)} \in\Cplx .
$$
The weighted least squares error as described in (\ref{mimoSK}) is written explicitly as
\begin{equation}\label{eq:SK-baryc}
\epsilon^{(k)} = \sum_{i=1}^\ell \frac{\qw_i}{|d^{(k)}(\xi_i)|^2} \left\| \sum_{j=1}^r \frac{\widehat{\respm}_j^{(k+1)}}{\xi_i-\pol_j^{(k)}} - \bfH(\xi_i) \left(1 + \sum_{j=1}^r \frac{\widehat{\dres}_j^{(k+1)}}{\xi_i-\pol_j^{(k)}}\right)\right\|_F^2 .
\end{equation}

\noindent One of the distinguishing features of the Gustavsen-Semlyen \textsf{VF} method \cite{Gustavsen-Semlyen-1999} emerges at this point: the explicit weighting factors $1/|d^{(k)}(\xi_i)|^2$ are eliminated through  ``pole relocation", i.e.,  the nodes in the barycentric representation are changed in such a way so as to absorb the weighting factors.
Note that if $\{\pol_j^{(k+1)}\}$ are the zeros of $d^{(k)}(s)$, then we can write 
${\displaystyle d^{(k)}(s) = {\prod_{q=1}^r (s-\pol_q^{(k+1)})}/{\prod_{q=1}^r (s-\pol_q^{(k)})}}$, and then 
\begin{eqnarray}
&& \sum_{i=1}^\ell \!\qw_i \!\left| \frac{{\displaystyle \prod_{q=1}^r (\xi_i-\pol_q^{(k)})}}{{\displaystyle \prod_{q=1}^r (\xi_i-\pol_q^{(k+1)})}}\right|^2 \!\left\| \!
\sum_{j=1}^r \frac{{\displaystyle \prod_{q\neq j} (\xi_i-\pol_q^{(k)})\widehat{\respm}_j^{(k+1)}}}{\prod_{q=1}^r (\xi_i-\pol_q^{(k)})} - \bfH(\xi_i) \frac{{\displaystyle \prod_{q=1}^r (\xi_i-\pol_q^{(k)}) + \sum_{j=1}^r
\widehat{\dres}_j^{(k+1)} \prod_{q\neq j} (\xi_i-\pol_q^{(k)}) }}{\prod_{q=1}^r (\xi_i-\pol_q^{(k)})} \right\|_F^2 \nonumber \\
&&= \sum_{i=1}^\ell \qw_i\left\| 
\sum_{j=1}^r \frac{{\displaystyle \prod_{q\neq j} (\xi_i-\pol_q^{(k)})\widehat{\respm}_j^{(k+1)}}}{\prod_{q=1}^r (\xi_i-\pol_q^{(k+1)})} - \bfH(\xi_i) \frac{{\displaystyle \prod_{q=1}^r (\xi_i-\pol_q^{(k)}) + \sum_{j=1}^r
\widehat{\dres}_j^{(k+1)} \prod_{q\neq j} (\xi_i-\pol_q^{(k)}) }}{\prod_{q=1}^r (\xi_i-\pol_q^{(k+1)})} \right\|_F^2 \equiv
\epsilon^{(k)}.
\end{eqnarray}
The next iterate, $\bfH_r^{(k+1)}$, will be represented in barycentric form using nodes $\pol_q^{(k+1)}$, $q=1,\ldots,r$.
We assume simple zeros for simplicity.  We introduce new variables, ${\dres}_j^{(k+1)}$,  so that
$$
\frac{{\displaystyle \prod_{q=1}^r (s-\pol_q^{(k)}) + \sum_{j=1}^r
\widehat{\dres}_j^{(k+1)} \prod_{q\neq j} (s-\pol_q^{(k)}) }}{\prod_{q=1}^r (s-\pol_q^{(k+1)})} = 1 + \sum_{j=1}^r \frac{{\dres}_j^{(k+1)}}{s-\pol_j^{(k+1)}}.
$$
 In the same way, we introduce new unknowns,
${\respm}_j^{(k+1)}$, so that
\begin{equation}\label{(*)}
\sum_{j=1}^r \frac{{\displaystyle \prod_{q\neq j} (s-\pol_q^{(k)})\widehat{\respm}_j^{(k+1)}}}{\prod_{q=1}^r (s-\pol_q^{(k+1)})} = 
\sum_{j=1}^r \frac{{\respm}_j^{(k+1)}}{s-\pol_j^{(k+1)}} .
\end{equation}
After this change of variables, the \textsf{LS} objective function from (\ref{eq:SK-baryc}) appears as
\begin{equation}\label{eq:VF-LS-problem}
\epsilon^{(k)} = \sum_{i=1}^\ell \qw_i\left\| \sum_{j=1}^r \frac{{\respm}_j^{(k+1)}}{\xi_i-\pol_j^{(k+1)}} - \bfH(\xi_i) \left(1 + \sum_{j=1}^r \frac{{\dres}_j^{(k+1)}}{\xi_i-\pol_j^{(k+1)}}\right)\right\|_F^2.
\end{equation}

Now, the values for $\{\respm_j^{(k+1)}\}_{j=1}^r$ and $\{ \dres_j^{(k+1)}\}_{j=1}^r$ that minimize $\epsilon^{(k)}$ in (\ref{eq:VF-LS-problem}) will determine the next iterate $\bfH_r^{(k+1)}(s)$.  The iteration continues by defining 
$d^{(k+1)}(s)=1+\sum_{j=1}^r{{\dres}_j^{(k+1)}}/{(s-\pol_j^{(k+1)})}$ 
and new poles $\pol_j^{(k+2)}$ will be the computed zeros of $d^{(k+1)}(s)$.

%
%
{
Numerical convergence is declared and the iteration terminates at an index $k_*$ when 
$\max_j |{\dres}_j^{(k_*)}|$ is ``small enough". 
The zeros of $d^{(k_*)}(s)$ are extracted as the eigenvalues of the matrix $\mathrm{diag}(\pol_j^{(k_*)})_{j=1}^r + (1,\cdots, 1)^T ({\dres}_1^{(k_*)}, \cdots, {\dres}_r^{(k_*)})$ and assigned to $\{\pol_j^{(k_*+1)}\}_{j=1}^r$.} 
The denominator $d^{(k_*)}(s)$ is set to the 
constant $1$ {indicating numerical pole--zero cancellation,} and the \textsf{LS} problem (\ref{eq:VF-LS-problem}) is solved 
for $\respm_j^{(k_*)}$ (assigning $k+1=k_*$) with 
all $\dres_j^{(k_*)}$ terms replaced with zeros.
The resulting \textsf{VF} approximant is
\begin{equation}\label{eq:H_r(k)-final}
\bfH_r(s) = \sum_{j=1}^r \frac{\respm_j^{(k_*)}}{s-\pol_j^{(k_*+1)}}.
\end{equation}
It is usually reported that numerical convergence takes place within a few iterations, provided that initial poles are well chosen.   However in difficult cases, numerical convergence 
may not be achieved, and the iterate described in (\ref{eq:H_r(k)}) may have a denominator that is far from constant. 
Even if the stopping criterion is not satisfied, the above procedure that yields (\ref{eq:H_r(k)-final}) will be valid at any $k$, and may be interpreted as taking the last computed barycentric nodes, viewing them as poles, and then computing a best least-squares rational approximation in pole-residue form.  
We discuss this closing step of the iteration further at the end of \S \ref{S=LSmimo-solution}.
The stopping criterion will be discussed in \S \ref{SS=StopCrit}.

\subsection{Pole-residue form of barycentric approximant}\label{S=BC2PR}
The barycentric representation offers many advantages for both the \textsf{VF} iteration and the \textsf{SK} iteration (i.e., with the explicit weighting by $1/|d^{(k)}(\xi_i)|^2$ as in (\ref{eq:SK-baryc})). 
Nonetheless, the pole-residue representation is more convenient for analysis and further usage. For example, a pole-residue representation of a (rational) transfer function leads immediately to a state space realization of the underlying dynamical system.  Furthermore, we may wish to inspect, in the course of iterations, an
approximant (\ref{eq:H_r(k)}) with the goal of estimating the importance of the contribution of certain of its poles and the possible effect of discarding them.  
Hence, it would be useful in general to have an efficient procedure to transform a barycentric representation of a function into a pole-residue representation. 
Toward that end, consider a $p\times m$ matrix rational function 
\begin{eqnarray} \label{eq:G_for_pr}
\mathbf{G}(s) = \frac{\sum_{j=1}^{r}\frac{\bfPhi_j}{s-\lambda_j}}
{1+\sum_{j=1}^{r}\frac{\varphi_j}{s-\lambda_j}},\;\; \bfPhi_j\in\Cplx^{p\times m},\;\;\varphi_j, \lambda_j\in\Cplx, \;\;
\mbox{and}\;\; |\varphi_j| + \|\bfPhi_j\|_F > 0.
\end{eqnarray}
 Assume that the barycentric nodes, 
$\lambda_1,\ldots,\lambda_r$ are distinct and closed under conjugation (non-real values appear in complex conjugate pairs), and that the corresponding $\varphi_j$s and $\bfPhi_j$s have a compatible conjugation symmetry ($\varphi_j$, $\bfPhi_j$ real if $\lambda_j$ real, $\varphi_i=\cc{\varphi_j}$, $\bfPhi_i=\cc{\bfPhi_j}$ if $\lambda_i=\cc{\lambda_j}$). 
Define complementary index sets: 
$$
\mathcal{J}_0 = \{ j\; : \; \varphi_j=0\}\quad \mbox{and}\quad \mathcal{J}_1= \{ j\; : \; \varphi_j \neq 0\}.
$$ 
Then $\{\lambda_j \; :\; j\in\mathcal{J}_k\}$, $\{\varphi_j \; :\; j\in\mathcal{J}_k\}$ are closed under complex conjugation for $k=0,1$. Note that 
$\{\lambda_j\; :\; j\in\mathcal{J}_0\}$ are among the poles of $\mathbf{G}(s)$.
The remaining poles can be indexed as 
$\{\widehat{\lambda}_j \; :\; j\in\mathcal{J}_1 \}$. 

\begin{proposition}\label{PROP1}
	Assume, in addition to the above,  that all poles of $\mathbf{G}(s)$ in (\ref{eq:G_for_pr}) are
	simple.\footnote{The general case of multiple poles is just more technical and it follows by standard residue calculus.} Then,
	$$
	\mathbf{G}(s) = \sum_{j\in\mathcal{J}_1} \frac{\widehat{{{R}_j}}}{s-\widehat{\lambda}_j} + 
	\sum_{j\in\mathcal{J}_0} \frac{{{{R}_j}}}{s-{\lambda}_j} , 
	$$
	where the residues can be efficiently calculated as 
	\begin{equation}\label{eq:residues}
	R_j = \frac{\bfPhi_j}{1+\sum_{i\in\mathcal{J}_1} \varphi_i/(\lambda_j-\lambda_i)},\;\;j\in\mathcal{J}_0 ; \;\;\;\;
	\widehat{R}_j = \frac{\prod_{i\in\mathcal{J}_1} (\widehat{\lambda}_j-\lambda_i)}{\prod_{i\neq j}(\widehat{\lambda}_j-\widehat{\lambda}_i)} \sum_{i=1}^r \frac{\bfPhi_i}{\widehat{\lambda}_j -\lambda_i},\;\; j\in\mathcal{J}_1 .
	\end{equation}

\end{proposition}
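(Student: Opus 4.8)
The plan is to compute both sets of residues directly, using partial-fraction bookkeeping on the numerator and denominator of $\mathbf{G}(s)$. Write $N(s) = \sum_{j=1}^r \bfPhi_j/(s-\lambda_j)$ and $d(s) = 1 + \sum_{j=1}^r \varphi_j/(s-\lambda_j)$, so that $\mathbf{G}(s) = N(s)/d(s)$. The key structural observation is that a barycentric node $\lambda_j$ is a pole of $\mathbf{G}$ precisely when it is \emph{not} cancelled by a zero of $d(s)$; since $d(s)$ has a simple pole at each $\lambda_j$ with $j \in \mathcal{J}_1$ and is regular at each $\lambda_j$ with $j \in \mathcal{J}_0$, the nodes in $\mathcal{J}_0$ survive as poles of $\mathbf{G}$, while for $j \in \mathcal{J}_1$ the factor $1/(s-\lambda_j)$ in $N(s)$ is matched by a like singularity in $d(s)$, so $\lambda_j$ becomes a removable point; the genuine poles coming from $\mathcal{J}_1$ are instead the zeros $\widehat{\lambda}_j$ of $d(s)$. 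This justifies the claimed form of the partial-fraction expansion, and it remains only to evaluate $R_j = \operatorname{res}_{s=\lambda_j}\mathbf{G}(s)$ for $j \in \mathcal{J}_0$ and $\widehat{R}_j = \operatorname{res}_{s=\widehat{\lambda}_j}\mathbf{G}(s)$ for $j \in \mathcal{J}_1$.

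For $j \in \mathcal{J}_0$: near $s=\lambda_j$ the numerator $N(s)$ has a simple pole with residue $\bfPhi_j$ (the other terms are regular there), while $d(s)$ is regular at $\lambda_j$ with value $d(\lambda_j) = 1 + \sum_{i \ne j} \varphi_i/(\lambda_j - \lambda_i) = 1 + \sum_{i \in \mathcal{J}_1}\varphi_i/(\lambda_j - \lambda_i)$, the last equality because $\varphi_i = 0$ for $i \in \mathcal{J}_0$. Hence $\operatorname{res}_{s=\lambda_j} N(s)/d(s) = \bfPhi_j / d(\lambda_j)$, which is exactly the first formula in (\ref{eq:residues}); one should note in passing that $d(\lambda_j) \ne 0$ (equivalently $\lambda_j$ is genuinely a simple pole of $\mathbf{G}$) is guaranteed by the simple-pole hypothesis. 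For $j \in \mathcal{J}_1$: $\widehat{\lambda}_j$ is a simple zero of $d(s)$ and $N$ is regular there (all $\widehat{\lambda}_j$ are distinct from every $\lambda_i$, else $d$ would have a pole, not a zero), so $\operatorname{res}_{s=\widehat{\lambda}_j} N(s)/d(s) = N(\widehat{\lambda}_j)/d'(\widehat{\lambda}_j)$. Now $N(\widehat{\lambda}_j) = \sum_{i=1}^r \bfPhi_i/(\widehat{\lambda}_j - \lambda_i)$ is the sum appearing in the second formula, and the scalar prefactor $\prod_{i \in \mathcal{J}_1}(\widehat{\lambda}_j - \lambda_i) / \prod_{i \ne j}(\widehat{\lambda}_j - \widehat{\lambda}_i)$ should emerge as $1/d'(\widehat{\lambda}_j)$.

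To get that last identification cleanly, I would clear denominators in $d(s)$. Since $\varphi_i = 0$ for $i \in \mathcal{J}_0$, the sum in $d$ runs only over $\mathcal{J}_1$, so
\[
d(s) \;=\; \frac{\prod_{i \in \mathcal{J}_1}(s - \lambda_i) \;+\; \sum_{j \in \mathcal{J}_1}\varphi_j \prod_{i \in \mathcal{J}_1,\, i \ne j}(s - \lambda_i)}{\prod_{i \in \mathcal{J}_1}(s - \lambda_i)} \;=\; \frac{\prod_{j \in \mathcal{J}_1}(s - \widehat{\lambda}_j)}{\prod_{i \in \mathcal{J}_1}(s - \lambda_i)},
\]
the numerator being a monic polynomial of degree $|\mathcal{J}_1|$ whose roots are by definition the $\widehat{\lambda}_j$, $j \in \mathcal{J}_1$. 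Differentiating this quotient and evaluating at a simple zero $s = \widehat{\lambda}_j$ (where the numerator vanishes, so only the term hitting that numerator factor survives) gives $d'(\widehat{\lambda}_j) = \prod_{i \ne j}(\widehat{\lambda}_j - \widehat{\lambda}_i) \big/ \prod_{i \in \mathcal{J}_1}(\widehat{\lambda}_j - \lambda_i)$, and taking the reciprocal yields precisely the prefactor in (\ref{eq:residues}). I expect the main obstacle — really the only point requiring care rather than routine computation — to be the careful handling of the pole/zero cancellation structure: establishing that the sum in $d$ collapses to an index set over $\mathcal{J}_1$, that the $\mathcal{J}_1$-nodes are removable singularities of $\mathbf{G}$ while their role as poles is taken over by the zeros $\widehat{\lambda}_j$, and that these $\widehat{\lambda}_j$ are distinct from all $\lambda_i$ and simple (this is where the hypothesis that all poles of $\mathbf{G}$ are simple is used). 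Once that combinatorial/pole-tracking picture is set up, both residue formulas follow from the one-line residue computations above, and the conjugation-symmetry remarks ensure the expansion respects the real structure but play no role in the residue formulas themselves.
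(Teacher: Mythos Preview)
Your proposal is correct and follows exactly the approach the paper indicates: the paper's proof is the single sentence ``The proof immediately follows from a calculation of residues, e.g.\ for $j\in\mathcal{J}_0$, $R_j=\lim_{s\rightarrow\lambda_j}(s-\lambda_j)\mathbf{G}(s)$,'' and you have simply carried out that residue computation in full, including the explicit identification of $1/d'(\widehat{\lambda}_j)$ via the factored form of $d(s)$.
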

\begin{proof}
The proof immediately follows from a calculation of residues, e.g. for $j\in\mathcal{J}_0$, $R_j=\lim_{s\rightarrow\lambda_j}(s-\lambda_j)\mathbf{G}(s)$. 	
\end{proof}

\noindent The first formula in (\ref{eq:residues}) (for $j\in\mathcal{J}_0$) is a special case of the second one, which is actually given in (\ref{(*)}).
This reflects the fact that the change of variables (\ref{(*)}) in the transition from \textsf{SK} to \textsf{VF} iterations implicitly seeks a pole-residue representation.

%

\subsection{Computing $\respm_j^{(k+1)}$ and $ \dres_j^{(k+1)}$}\label{S=LSmimo-solution}
Measurements are naturally kept in a tensor $\mathbb{S}\in\Cplx^{p\times m\times\ell}$ 
with $\mathbb{S}(:,:,i)=\bfS^{(i)}=\bfH(\xi_i)+ \bfcE ^{(i)}\in\Cplx^{p\times m}$.  
$\bfS^{(i)}$ denotes a sampling of the transfer function at the node $\xi_i$, allowing also for measurement errors to be
represented through $\bfcE ^{(i)}$.  Notice that the 
{mode-$3$ fiber}
$\mathbb{S}(u,v,:)=\begin{pmatrix} 
S^{(1)}_{uv},  S^{(2)}_{uv}, \ldots , S^{(\ell-1)}_{uv}, S^{(\ell)}_{uv}
\end{pmatrix}^T\in\Cplx^\ell$, represents connections between input $v$ and output $u$ over all frequency samples. 

The expression for $\epsilon^{(k)}$ can be matricized in several natural ways. The one followed in the \textsf{VF} literature is derived from a point-wise matching of input-output relations over all measurements. This is natural as it decouples the problem into $p\cdot m$ rational function approximations having a common set of poles. In terms of the matrix entries, the least squares error (\ref{eq:VF-LS-problem}) can be written as
%
%
\begin{eqnarray}
\epsilon^{(k)} &=& \sum_{i=1}^\ell \qw_i \sum_{u=1}^p \sum_{v=1}^m \left| \sum_{j=1}^r
\left( \frac{{(\respm}_j^{(k+1)})_{uv}}{\xi_i-\pol_j^{(k+1)}} - S^{(i)}_{uv}\frac{{\dres}_j^{(k+1)}}{\xi_i-\pol_j^{(k+1)}} \right) - S^{(i)}_{uv}\right|^2  \nonumber \\
&=& \sum_{v=1}^m \sum_{u=1}^p \left\| \dqw \begin{pmatrix} \Cauchy^{(k+1)}, & - D^{(uv)}{\Cauchy}^{(k+1)}\end{pmatrix} \begin{pmatrix}\varPhi^{(k+1)}(u,v,:) \cr \bfvarphi^{(k+1)} \end{pmatrix} -\dqw \mathbb{S}(u,v,:) \right\|_2^2 , \label{eq:LS1}
\end{eqnarray}
%
%
where {$\dqw = \mathrm{diag}(\sqrt{\qw_i})$  } , 
$D^{(uv)}=\mathrm{diag}(S^{(i)}_{uv})_{i=1}^\ell$, $\bfvarphi^{(k+1)}=(\dres^{(k+1)}_1,\ldots,\dres^{(k+1)}_r)^T$,  and
\begin{equation} \label{eq:A=PC}
\Cauchy^{(k+1)} = \left(\begin{smallmatrix}
\frac{1}{\xi_1-\pol_1^{(k+1)}} & \frac{1}{\xi_1-\pol_2^{(k+1)}} & \cdots & \frac{1}{\xi_1-\pol_r^{(k+1)}}  \\[0.3em]
\frac{1}{\xi_2-\pol_1^{(k+1)}} & \frac{1}{\xi_2-\pol_2^{(k+1)}} & \cdots & \frac{1}{\xi_2-\pol_r^{(k+1)}}  \cr
\vdots & \vdots & \vdots & \vdots  \cr
\frac{1}{\xi_{\ell-1}-\pol_1^{(k+1)}} & \frac{1}{\xi_{\ell-1}-\pol_2^{(k+1)}} & \cdots & \frac{1}{\xi_{\ell-1}-\pol_r^{(k+1)}} \cr
\frac{1}{\xi_\ell-\pol_1^{(k+1)}} & \frac{1}{\xi_\ell-\pol_2^{(k+1)}} & \cdots & \frac{1}{\xi_\ell-\pol_r^{(k+1)}} 
\end{smallmatrix}\right),\;\;
\varPhi^{(k+1)}(u,v,:)=\left( \begin{smallmatrix} 
(\Phi_1^{(k+1)})_{uv} \cr (\Phi_2^{(k+1)})_{uv} \cr \vdots \cr (\Phi_{r-1}^{(k+1)})_{uv} \cr (\Phi_r^{(k+1)})_{uv}
\end{smallmatrix}
\right) .
\end{equation}
To minimize (\ref{eq:LS1}), it is convenient to introduce the QR factorizations for
$1\leq u \leq p$, $1\leq v\leq m$:
\begin{align}\label{eq:concurrent-QRs}
\dqw
\begin{pmatrix} \Cauchy^{(k+1)},  - D^{(uv)}{\Cauchy}^{(k+1)}\end{pmatrix} & = \!\!
\left( \begin{smallmatrix}
\div & \div & \div & \divideontimes & \divideontimes & \divideontimes \\
\div & \div & \div & \divideontimes & \divideontimes & \divideontimes \\
\div & \div & \div & \divideontimes & \divideontimes & \divideontimes \\
\div & \div & \div & \divideontimes & \divideontimes & \divideontimes \\
\div & \div & \div & \divideontimes & \divideontimes & \divideontimes \\
\div & \div & \div & \divideontimes & \divideontimes & \divideontimes \\
 \div & \div & \div & \divideontimes & \divideontimes & \divideontimes \\
 \end{smallmatrix}\right)\!\!
 =  Q^{(k+1)}_{uv} \!\begin{pmatrix} (R^{(k+1)})_{11} & (R^{(k+1)}_{uv})_{12}\cr
 0 & (R^{(k+1)}_{uv})_{22} \cr 0 & 0 \end{pmatrix} \\
 & \!\! =\!\! 
 \begin{pmatrix} (Q^{(k+1)})_1 & (Q^{(k+1)}_{uv})_2 &  (Q^{(k+1)}_{uv})_3 \end{pmatrix}
 \left( \begin{smallmatrix}
 \ast & \ast & \ast & \times & \times & \times \\
   0  & \ast & \ast & \times & \times & \times \\
   0  &   0  & \ast & \times & \times & \times \\
   0  &   0  &   0  & \star          & \star          & \star          \\
   0  &   0  &   0  &       0        & \star          & \star \\
   0  &   0  &   0  &       0        &       0        & \star   \\
   0  &   0  &   0  &       0        &       0        &        0       \\[-.5em]
    \mbox{\small{$\vdots$}}  &  & & \mbox{\small{$\vdots$}}     &       &   \mbox{\small{$\vdots$}}        \\
   0  &   0  &   0  &       0        &       0        &        0       \\
  \end{smallmatrix}\right),
\end{align}
where the unitary matrix $Q^{(k+1)}_{uv}$ has been partitioned as 
$Q^{(k+1)}_{uv} = \begin{pmatrix} (Q^{(k+1)})_1 & (Q^{(k+1)}_{uv})_2 &  (Q^{(k+1)}_{uv})_3 \end{pmatrix}$, with block columns of sizes $\ell\times r$,
$\ell\times r$, $\ell\times (\ell-2r)$, respectively.  The leading $r$ columns of (\ref{eq:concurrent-QRs})
are \emph{independent} of $(u,v)$ hence the initial part of the factorization, 
$(Q^{(k+1)})_1\,(R^{(k+1)})_{11} = \dqw \Cauchy^{(k+1)}$, 
 need only be done once.  

The \textsf{LS} residual norm may be decomposed as 
$\epsilon^{(k)}  = \epsilon^{(k)}_{1} + \epsilon^{(k)}_{2} + \epsilon^{(k)}_{3}$,
where 
\begin{eqnarray}
\epsilon_1^{(k)} &=& \sum_{v=1}^m \sum_{u=1}^p \left\| (R^{(k+1)})_{11}\varPhi^{(k+1)}(u,v,:) + (R^{(k+1)}_{uv})_{12} \bfvarphi^{(k+1)} - (Q^{(k+1)})_1^* \dqw\mathbb{S}(u,v,:)\right\|_2^2 \nonumber\\
\epsilon_2^{(k)}
&=& \sum_{v=1}^m \sum_{u=1}^p \left\| (R^{(k+1)}_{uv})_{22}\bfvarphi^{(k+1)} -  (Q^{(k+1)}_{uv})_2^* \dqw\mathbb{S}(u,v,:) \right\|_2^2,\qquad\mbox{and} \label{eq:eps123}\\
 \epsilon_3^{(k)} & =  & \sum_{v=1}^m \sum_{u=1}^p \left\| (Q^{(k+1)}_{uv})_3^* \dqw\mathbb{S}(u,v,:)\right\|_2^2, \nonumber
\end{eqnarray}
where we have used $M^*$ to denote the conjugate transpose of a matrix $M$.
Here, $\epsilon^{(k)}_{3}$ is a part of the residual that is beyond the reach of the unknowns $\respm_j^{(k+1)}, \dres_j^{(k+1)}$ -- it corresponds to the component in the data that is orthogonal to the subspace of rational functions available with the current barycentric nodes.
Only a set of new (better) barycentric nodes will incline this subspace toward the data in such a way as to reduce this component of the error.  Extracting optimal information from a given subspace associated with the current barycentric nodes is achieved by minimizing  $\epsilon^{(k)}_{1} + \epsilon^{(k)}_{2}$.

To that end, first note that we can assume that $(R^{(k+1)})_{11}$ is nonsingular since it participates in a QR factorization of  $\dqw\Cauchy^{(k+1)}$, which in turn must have full column rank since $\{\pol_j^{(k)}\}_{j=1}^r$ are presumed distinct and we (tacitly) assume that $\xi_i\neq \pol_j^{(k)}$ throughout the iteration. 
 Thus, we can make $\epsilon^{(k)}_{1}$ exactly zero for any choice of
$\dres^{(k+1)}$ by taking, concurrently for all input-output pairs $(u,v)$, with $1\leq u\leq p$ and $1\leq v\leq m$, 
\begin{equation}\label{eq:PHIuv}
\varPhi^{(k+1)}(u,v,:) = (R^{(k+1)})_{11}^{-1} \left((Q^{(k+1)})_1^*\dqw \mathbb{S}(u,v,:) -(R^{(k+1)}_{uv})_{12} \bfvarphi^{(k+1)} \right).
\end{equation}
When the poles of the approximant are 
assigned to the barycentric nodes, $\{\pol_j^{(k+1)}\}_{j=1}^r$, then
the residues in (\ref{eq:H_r(k)-final}) may be found by solving (\ref{eq:PHIuv}) with $\bfvarphi^{(k+1)}=0$.
For other cases, the optimal $\bfvarphi^{(k+1)}$ will be found by minimizing $\epsilon_k^{(2)}$, and this process is uncoupled from any information about $\varPhi^{(k+1)}$. However, determining 
$\bfvarphi^{(k+1)}$ does involve a coupled
 minimization across all input-to-output $(u,v)$ pairs.  
{The computation (\ref{eq:PHIuv}) is skipped if we choose to proceed with the \textsf{SK} iterations.}

Once the maximal number of iterations is reached without reducing $\bfvarphi^{(k)}$
enough to be neglected, then $\bfvarphi^{(k+1)}$ in (\ref{eq:PHIuv}) also cannot be 
%
%
neglected without losing information. That is why the approximant defined in
(\ref{eq:H_r(k)-final}) uses the poles $\lambda_j^{(k_*+1)}$. It follows from \S \ref{S=BC2PR}
that this is equivalent to solving (\ref{eq:PHIuv}) with $\bfvarphi^{(k_*)}$ 
included in the right-hand sides and transforming the computed rational approximant 
from barycentric into pole-residue form. This is another elegant feature built into the Gustavsen-Semlyen \textsf{VF} framework. {(If we use the original \textsf{SK} iterations with diagonal scalings and fixed poles, then the barycentric form can be transformed to pole-residue representation using Proposition \ref{PROP1}.)  }

%
\subsection{Global structure}
This procedure can be represented as $\|\mathbf{A}^{(k+1)}x-\mathbf{b}\|_2 \rightarrow \min$, in the usual form, as follows. First, we specify that the indices $(u,v)$ in the $p\times m$ array will be vectorized in a column-by-column fashion, $(u,v)\leadsto \iota_{uv} = p(v-1) + u$. Define $\mathbf{A}^{(k+1)}$ to be a block matrix, with $pm\times (pm+1)$ block structure, each block of dimensions $\ell\times r$. Only $2 p m$ out of $pm(pm+1)$ blocks are \emph{a priori} nonzero. Initially, set $\mathbf{A}^{(k+1)}$ to zero and
update it as follows: in the block row $\iota_{uv}$ set the diagonal block to
$\dqw\Cauchy^{(k+1)}$ and the last block in  the row to $-\dqw D^{(uv)}\Cauchy^{(k+1)}$. 
The right-hand side is the vector $\mathbf{b}$ with the $pm\times 1$ block structure, where
the $\iota_{uv}$th block is set to $\dqw\mathbb{S}(u,v,:)$. The concurrent QR
factorizations (\ref{eq:concurrent-QRs}) can be then represented by pre-multiplying $\mathbf{A}^{(k+1)}$ with unitary block-diagonal matrix $(\mathcal{Q}^{(k+1)})^*$,  with the 
$\iota_{uv}$th diagonal block set to $(Q^{(k+1)}_{uv})^*$. It is easily seen that the rows of $(\mathcal{Q}^{(k+1)})^*\mathbf{A}^{(k+1)}$ can be permuted to obtain the structure illustrated in (\ref{eq:structure-partialQR}).\footnote{Elements not displayed are zeros.}

\begin{equation}\label{eq:structure-partialQR}
\underbrace{ \left( \begin{smallmatrix}  
\div & \div & \div &  &   &   &   &  &  &  &  &    &  \divideontimes & \divideontimes & \divideontimes \\ 
\div & \div & \div &  &   &   &   &  &  &  &  &    &  \divideontimes & \divideontimes & \divideontimes \\
\div & \div & \div &  &   &   &   &  &  &  &  &    &  \divideontimes & \divideontimes & \divideontimes \\
\div & \div & \div &  &   &   &   &  &  &  &  &    &  \divideontimes & \divideontimes & \divideontimes \\
\div & \div & \div &  &   &   &   &  &  &  &  &    &  \divideontimes & \divideontimes & \divideontimes \\
\div & \div & \div &  &   &   &   &  &  &  &  &    &  \divideontimes & \divideontimes & \divideontimes \\
\div & \div & \div &  &   &   &   &  &  &  &  &    &  \divideontimes & \divideontimes & \divideontimes \\ \hline
     &      &      & \div & \div & \div &  &  &  &  &  &  & \divideontimes & \divideontimes & \divideontimes \\ 
     &      &      & \div & \div & \div &  &  &  &  &  &  & \divideontimes & \divideontimes & \divideontimes \\
     &      &      & \div & \div & \div &  &  &  &  &  &  & \divideontimes & \divideontimes & \divideontimes \\
     &      &      & \div & \div & \div &  &  &  &  &  &  & \divideontimes & \divideontimes & \divideontimes \\
     &      &      & \div & \div & \div &  &  &  &  &  &  & \divideontimes & \divideontimes & \divideontimes \\
          &      &      & \div & \div & \div &  &  &  &  &  &  & \divideontimes & \divideontimes & \divideontimes \\
          &      &      & \div & \div & \div &  &  &  &  &  &  & \divideontimes & \divideontimes & \divideontimes \\\hline 
     &  &  &  &  &  & \div & \div & \div &  &  &  & \divideontimes & \divideontimes & \divideontimes \\
     &  &  &  &  &  & \div & \div & \div &  &  &  & \divideontimes & \divideontimes & \divideontimes \\
     &  &  &  &  &  & \div & \div & \div &  &  &  & \divideontimes & \divideontimes & \divideontimes \\
     &  &  &  &  &  & \div & \div & \div &  &  &  & \divideontimes & \divideontimes & \divideontimes \\
     &  &  &  &  &  & \div & \div & \div &  &  &  & \divideontimes & \divideontimes & \divideontimes \\
      &  &  &  &  &  & \div & \div & \div &  &  &  & \divideontimes & \divideontimes & \divideontimes \\
      &  &  &  &  &  & \div & \div & \div &  &  &  & \divideontimes & \divideontimes & \divideontimes \\  \hline   
     &  &  &  &  &  &  &  &  & \div & \div & \div & \divideontimes & \divideontimes & \divideontimes \\
     &  &  &  &  &  &  &  &  & \div & \div & \div & \divideontimes & \divideontimes & \divideontimes \\
     &  &  &  &  &  &  &  &  & \div & \div & \div & \divideontimes & \divideontimes & \divideontimes \\
     &  &  &  &  &  &  &  &  & \div & \div & \div & \divideontimes & \divideontimes & \divideontimes \\
     &  &  &  &  &  &  &  &  & \div & \div & \div & \divideontimes & \divideontimes & \divideontimes \\
          &  &  &  &  &  &  &  &  & \div & \div & \div & \divideontimes & \divideontimes & \divideontimes \\
          &  &  &  &  &  &  &  &  & \div & \div & \div & \divideontimes & \divideontimes & \divideontimes \\
 \end{smallmatrix}\right)}_{\mathbf{A}^{(k+1)}, \ell=7, r=3, p=m=2},\;\; 
\underbrace{
\left( \begin{smallmatrix}  
\ast & \ast & \ast &  &   &   &   &  &  &  &  &    &  \times & \times & \times \\ 
0    & \ast & \ast &  &   &   &   &  &  &  &  &    &  \times & \times & \times \\
0    & 0    & \ast &  &   &   &   &  &  &  &  &    &  \times & \times & \times \\
0    & 0    & 0    &  &   &   &   &  &  &  &  &    &  \star & \star & \star \\
0    & 0    & 0    &  &   &   &   &  &  &  &  &    &         0       & \star & \star \\
0    & 0    & 0    &  &   &   &   &  &  &  &  &    &         0       &        0       & \star \\
0    & 0    & 0    &  &   &   &   &  &  &  &  &    &         0       &        0       &   0    \\ \hline
 &      &      & \ast & \ast & \ast &  &  &  &  &  &  & \times & \times & \times \\ 
 &      &      & 0    & \ast & \ast &  &  &  &  &  &  & \times & \times & \times \\
 &      &      & 0    & 0    & \ast &  &  &  &  &  &  & \times & \times & \times \\
 &      &      & 0    & 0    & 0    &  &  &  &  &  &  & \star & \star & \star \\
 &      &      & 0    & 0    & 0    &  &  &  &  &  &  &      0     & \star & \star \\
 &      &      & 0    & 0    & 0    &  &  &  &  &  &  &      0     &    0       & \star \\
 &      &      & 0    & 0    & 0    &  &  &  &  &  &  &      0     &        0       &    0 \\\hline 
     &  &  &  &  &  & \ast & \ast & \ast &  &  &  & \times & \times & \times \\
     &  &  &  &  &  & 0    & \ast & \ast &  &  &  & \times & \times & \times \\
     &  &  &  &  &  & 0    & 0    & \ast &  &  &  & \times & \times & \times \\
     &  &  &  &  &  & 0    & 0    & 0    &  &  &  & \star           & \star         & \star \\
     &  &  &  &  &  & 0    & 0    & 0    &  &  &  &      0         & \star           & \star \\
      &  &  &  &  &  & 0   & 0    & 0    &  &  &  &      0         &       0        & \star \\
      &  &  &  &  &  & 0   & 0    & 0    &  &  &  &      0         &       0        &  0  \\  \hline   
     &  &  &  &  &  &  &  &  & \ast & \ast & \ast & \times & \times & \times \\
     &  &  &  &  &  &  &  &  & 0    & \ast & \ast & \times & \times & \times \\
     &  &  &  &  &  &  &  &  & 0    & 0    & \ast & \times & \times & \times \\
     &  &  &  &  &  &  &  &  & 0    & 0    & 0    & \star          & \star          & \star \\
     &  &  &  &  &  &  &  &  & 0    & 0    & 0    &  0             & \star          & \star \\
     &  &  &  &  &  &  &  &  & 0    & 0    & 0    & 0              &        0       & \star \\
     &  &  &  &  &  &  &  &  & 0    & 0    & 0    &     0          &        0       &       0        \\
 \end{smallmatrix}\right)}_{(\mathcal{Q}^{(k+1)})^* \mathbf{A}^{(k+1)}} ,
 \underbrace{ \left( \begin{smallmatrix}  
 \ast & \ast & \ast &  &   &   &   &  &  &  &  &    &  \times & \times & \times \\ 
 0    & \ast & \ast &  &   &   &   &  &  &  &  &    &  \times & \times & \times \\
 0    & 0    & \ast &  &   &   &   &  &  &  &  &    &  \times & \times & \times \\
   &      &      & \ast & \ast & \ast &  &  &  &  &  &  & \times & \times & \times \\ 
   &      &      & 0    & \ast & \ast &  &  &  &  &  &  & \times & \times & \times \\
   &      &      & 0    & 0    & \ast &  &  &  &  &  &  & \times & \times & \times \\
         &  &  &  &  &  & \ast & \ast & \ast &  &  &  & \times & \times & \times \\
         &  &  &  &  &  & 0    & \ast & \ast &  &  &  & \times & \times & \times \\
         &  &  &  &  &  & 0    & 0    & \ast &  &  &  & \times & \times & \times \\
               &  &  &  &  &  &  &  &  & \ast & \ast & \ast & \times & \times & \times \\
               &  &  &  &  &  &  &  &  & 0    & \ast & \ast & \times & \times & \times \\
               &  &  &  &  &  &  &  &  & 0    & 0    & \ast & \times & \times & \times \\ \hline\hline
      &      &      &  &   &   &   &  &  &  &  &    &  \star  & \star  & \star \\
      &      &      &  &   &   &   &  &  &  &  &    & 0       & \star  & \star \\
      &      &      &  &   &   &   &  &  &  &  &    &  0      &  0     & \star \\
  &      &      &      &      &      &  &  &  &  &  &  & \star & \star & \star \\
  &      &      &      &      &      &  &  &  &  &  &  &      0     & \star & \star \\
  &      &      &      &      &     &  &  &  &  &  &  &      0     &    0       & \star \\
      &  &  &  &  &  &      &      &      &  &  &  & \star           & \star         & \star \\
      &  &  &  &  &  &      &     &      &  &  &  &   0    & \star           & \star \\
       &  &  &  &  &  &     &      &      &  &  &  &   0    &  0    & \star \\
      &  &  &  &  &  &  &  &  &      &      &      & \star          & \star          & \star \\
      &  &  &  &  &  &  &  &  &      &      &      &  0             & \star          & \star \\
      &  &  &  &  &  &  &  &  &      &      &      & 0              &        0       & \star \\
             &  &  &  &  &  &    &     &     &  &  &  &   0    &  0    &  0  \\   
           &     &     &  &   &   &   &  &  &  &  &    &  0      &  0     &   0    \\  
        &      &      &     &    &    &  &  &  &  &  &  &      0     &        0       &    0 \\ 
      &  &  &  &  &  &  &  &  &     &     &     &     0          &        0       &       0        \\
  \end{smallmatrix}\right)}_{(\mathcal{Q}^{(k+1)})^* \mathbf{A}^{(k+1)},\;\;\mbox{row permuted}} .
\end{equation}
Of course, the above matrices will not be used as an actual data structure in a computational routine. 
But, this global view of the \textsf{LS} problem is useful for conceptual considerations.
For instance, the $\epsilon^{(k)}_{3}$ part of the residual corresponds to the zero rows of $(\mathcal{Q}^{(k+1)})^*\mathbf{A}^{(k+1)}$ -- 
they build the block of zero rows at the bottom of the row-permuted $\Pi(\mathcal{Q}^{(k+1)})^*\mathbf{A}^{(k+1)}$, see (\ref{eq:structure-partialQR}). The corresponding entries in the transformed right-hand side amount to $\epsilon^{(k)}_{3}$ in the Euclidean norm.

The \textsf{LS} problem with the block upper triangular permuted $\mathbf{B}^{(k+1)}=\Pi (\mathcal{Q}^{(k+1)})^* \mathbf{A}^{(k+1)}$ 
and the corresponding partitioned right-hand side $\mathbf{s}^{(k+1)}= \Pi (\mathcal{Q}^{(k+1)})^* \mathbf{b}$ can be written as, see (\ref{eq:structure-partialQR}),
\begin{equation}\label{zd:eq:VF-LS1}
\begin{pmatrix} 
\mathbf{B}_{[11]}^{(k+1)} & \mathbf{B}_{[12]}^{(k+1)} \cr 0 & \mathbf{B}_{[22]}^{(k+1)}  \cr 0 & 0
\end{pmatrix}
\begin{pmatrix} \bfPhi^{(k+1)} \cr \bfvarphi^{(k+1)} \end{pmatrix} \approxeq 
\begin{pmatrix} \mathbf{s}_1^{(k+1)} \cr \mathbf{s}_2^{(k+1)}  \cr \mathbf{s}_3^{(k+1)}
\end{pmatrix} .
\end{equation} 
\begin{algorithm}[hh]
	\caption{Vector Fitting - Basic Iterations} \label{zd:ALG:MIMOVF-basic}
	\begin{algorithmic}[1]
		\STATE Given: The sampling data $\bfH(\xi_i)$ for $i=1,\ldots, \ell$ ; maximal number of iterations $k_{\max}$.
		\STATE Set $k \leftarrow 0$ and make an initial pole selection $\bfpol^{(k+1)}\in\Cplx^r$ .
		\WHILE{ \{ stopping criterion not satisfied and $k\leq k_{\max}$ \}}
		\STATE Form $\mathbf{A}^{(k+1)}$ and $\mathbf{b}$.
		\STATE Compute $\mathbf{B}^{(k+1)}=\Pi (\mathcal{Q}^{(k+1)})^* \mathbf{A}^{(k+1)}$ and $\mathbf{s}^{(k+1)}= \Pi (\mathcal{Q}^{(k+1)})^* \mathbf{b}$ and partition as in (\ref{zd:eq:VF-LS1}).
		\STATE Solve $\| \mathbf{B}_{[22]}^{(k+1)} \bfvarphi^{(k+1)} - \mathbf{s}_2^{(k+1)}\|_2\longrightarrow\min$ for $\bfvarphi^{(k+1)}$.
		\STATE Set $k\leftarrow k+1$ and compute $\bfpol^{(k+1)} = zeros(1+\sum_{j=1}^r \dres_j^{(k)}/(s-\lambda_j^{(k)}))$.
		\ENDWHILE
		\STATE $\bfPhi = (\mathbf{B}_{[11]}^{(k)})^{-1} \mathbf{s}_1^{(k)}$.
	\end{algorithmic}
\end{algorithm}
\begin{remark}\label{REM:OnLine9}
\em{ Observe that the iteration on $\bfvarphi^{(k)}$ proceeds independently of $\bfPhi$ and indeed, $\bfPhi$ is obtained only in the final step, Line 9, which accomplishes the simultaneous determination of 
residues by minimizing  
$\|\dqw \left(\Cauchy^{(k+1)}\varPhi^{(k+1)}(u,v,:) -  \mathbb{S}(u,v,:)\right)\|_2$, for $u=1,\ldots,p$, $v=1,\ldots,m$.	 This observation  was first exploited in \cite{Deschrijver-Mrozowski-Dhaene-Zutter-2008}.  This may be implemented as a solution of an  \textsf{LS} problem with multiple right-hand sides,
and additional measures can be taken to compute more accurate residues, see \S \ref{SS=HALA}. 
Stopping criteria (Line 3) will be discussed in \S \ref{SS=StopCrit}. }
\end{remark}
%
%
\section{Numerical issues that arise in standard \textsf{VF}}\label{S=NumericalDetails}
The key variables in \textsf{VF} are computed as solutions of \textsf{LS} problems, where the coefficient matrices are built from
Cauchy and diagonally scaled Cauchy matrices, thus potentially highly ill-conditioned.
Further, as we hope to capture the data by reducing the residual, we also expect cancellation to take place. These issues pose tough challenges to numerical analyst during the finite precision implementation of the algorithm. In this section, we discuss several important details that are at the core of a robust implementation of  \textsf{VF}.


\subsection{Least squares solution and rank revealing QR factorization}\label{SS=LS+QR}
To fully understand  the global behavior of \textsf{VF} iterations in finite precision arithmetic,  
it is crucial to investigate all the details of an \textsf{LS} solver used in a robust software implementation.
For example, consider  Line 6. in Algorithm \ref{zd:ALG:MIMOVF-basic}, i.e., consider the  \textsf{LS} problem $\| \mathbf{B}_{[22]} \bfvarphi - \mathbf{s}\|_2\rightarrow\min$ where we now drop all superfluous indices to ease notation.  In a \textsc{matlab} implementation, the solution
is obtained using the backslash operator, i.e., $\bfvarphi = \mathbf{B}_{[22]} \backslash \mathbf{s}$, or using the pseudoinverse, i.e., $\bfvarphi = \textsf{pinv}(\mathbf{B}_{[22]}) \mathbf{s}$, computed using the SVD and an appropriate threshold for determining numerical rank.
The state of the art \textsf{LAPACK} library \cite{LAPACK} provides driver routines \textsf{xgelsy}, based on a complete orthogonal decomposition,  and \textsf{xgelss},
\textsf{xgelsd}, based on the SVD decomposition.  

 We briefly describe the decomposition approach:  
In the first step, the column pivoted QR factorization is computed and written in partitioned form 
\begin{equation}\label{eq:RRQR}
\mathbf{B}_{[22]} P = W T = \begin{pmatrix} W_1 & W_2 \end{pmatrix} \begin{pmatrix} T_{[11]} & T_{[12]} \cr 0 & T_{[22]}\end{pmatrix},\;\; W^* W = \Id , \;\; \|T_{[22]}\|_F \leq \epsilon \|T_{[11]}\|_F ,
\end{equation}
where $\epsilon$ is a threshold value, e.g. $\epsilon = n\roff$. 
As a consequence of the Businger--Golub pivoting \cite{bus-gol-65}, 
\begin{equation}\label{eq:Tii}
|T_{ii}| \geq \sqrt{\sum_{j=i}^k |T_{jk}|^2},\;\; 1 \leq i\leq k \leq r.
\end{equation}
%
\textcolor{black}{In the case of differently weighted rows of $\mathbf{B}_{[22]}$, numerical stability can be enhanced by using Powell-Reid complete pivoting \cite{Powell-Reid-1968} or by presorting the rows in order of decreasing $\infty$-norm \cite{cox-hig-98}}. 
The actual size of $T_{[11]}$ may be determined by an incremental condition number estimator, or by inspecting for gaps in the sequence $|T_{11}| \geq |T_{22}|\geq\cdots\geq |T_{nn}|$.  If no such partition is possible, then $T=T_{[11]}$ and the block $T_{[22]}$ is void. In ill-conditioned cases, as we could have in Algorithm \ref{zd:ALG:MIMOVF-basic}, such a partition is likely to be visible, see, e.g., Figure \ref{FIG-Structure-BQR-Heat-1}. Then, $T_{[22]}$ is deemed negligible noise and set to zero. This is justified by backward error analysis: there exists a small perturbation of the initial matrix $\mathbf{B}_{[22]}$ such that  this part of the triangular factor  is exactly zero. Then, an additional orthogonal reduction transformation $Z$ (similar to QR factorization, see, e.g., \textsf{LAPACK} routine \textsf{xtzrzf}) is deployed from the right leading to a URV decomposition
\begin{equation}\label{eq:LS:solution}
\mathbf{B}_{[22]} P \approx W \begin{pmatrix} \widehat{T}_{[11]} & 0 \cr 0 & 0 \end{pmatrix} Z,\;\;\mbox{and the solution}\;\; \bfvarphi = P Z^*\begin{pmatrix} \widehat{T}_{[11]}^{-1} W_1^* \mathbf{s} \cr 0 \end{pmatrix} .
\end{equation}
The vector $\bfvarphi$ in (\ref{eq:LS:solution}) is the minimal Euclidean norm solution of a nearby (backward perturbed) problem. However, in the rank deficient case, other particular choices from the solution manifold might be of interest. For instance, once we set $T_{[22]}$ in (\ref{eq:RRQR}) to zero, we can use
\begin{equation}\label{eq:LS:solution2}
\mathbf{B}_{[22]} P \approx W \begin{pmatrix} {T}_{[11]} &T_{[12]}  \cr 0 & 0 \end{pmatrix},\;\;\mbox{and the solution}\;\; \bfvarphi^{(0)} = P \begin{pmatrix} {T}_{[11]}^{-1} W_1^* \mathbf{s} \cr 0 \end{pmatrix} ,
\end{equation}
with the same residual norm as $\bfvarphi$ in (\ref{eq:LS:solution}). Note that while
the \textsc{matlab} command $\mathbf{B}_{[22]} \backslash \mathbf{s}$ computes $\bfvarphi^{(0)}$,  
the SVD based $\textsf{pinv}(\mathbf{B}_{[22]}) \mathbf{s}$ and 
the \textsf{LAPACK} routines return the minimal norm solution $\bfvarphi$.
\textcolor{black}{
These details may also significantly impact the computation in Line 9 of Algorithm \ref{zd:ALG:MIMOVF-basic} (cf. Remark
\ref{REM:OnLine9}). Numerical rank deficiency will trigger truncation and in the case of  the solution method in (\ref{eq:LS:solution2}), some of the residues in 
(\ref{eq:H_r(k)-final}) will be computed as $p\times m$ zero matrices, thus effectively removing the corresponding poles from $\bfH_r(s)$. We discuss this residue computation step in more detail in \S \ref{SS=HALA}.}

Determining the numerical rank is a delicate procedure and it should be tailored to
a particular application, based on all available information and interpretation of
the solution. For instance, what is a sensible choice for the threshold $\epsilon$ in
(\ref{eq:RRQR}) and how do we decide whether to prefer the solution of minimal Euclidean length or the solution with most zero entries? What can we infer from the numerical rank of 
$\mathbf{B}_{[22]}$? These issues are further
discussed in \S \ref{SS=noise}. 	
\subsection{Convergence introduces noise}\label{SS=noise}
In this section, we analyze and illustrate that as  \textsf{VF} proceeds, the coefficient matrices in the pole identification phase tend to become noisy with a significant drop in column norms that may coincide also with a reduction in numerical rank. This prompts us to advise caution when rescaling columns in order to improve the condition number, since rescaling columns that have been computed
through massive cancellations will preclude inferring an accurate numerical rank. Since \textsf{VF} simultaneously fits all input--output pairs using a common set of poles, 
it suffices  to focus our analysis on only one fixed input--output pair $(u,v)$. For simplicity of the notation, we drop the iteration index $k$, and take unit weights, i.e., $\dqw=\Id_\ell$. %
 If $r$ is large enough and the poles have settled, then, with some small $error$,
$$
\mathbb{S}(u,v,1:\ell) 
\approx  \Cauchy x + error = (Q)_1 (R)_{11} x + error,
$$
{where $x=\varPhi(u,v,1:r)$, see Remark \ref{REM:OnLine9}.}
Here we used the QR factorization (\ref{eq:concurrent-QRs}). Now, the right hand side in the error contribution $\epsilon_2$ in (\ref{eq:eps123}) that corresponds to the pair $(u,v)$ is
\begin{equation}\label{eq:smallRHS}
(Q_{uv})_2^* \mathbb{S}(u,v,1:\ell) =  (Q_{uv})_2^* (Q)_1 (R)_{11} x 
+ (Q_{uv})_2^* error = (Q_{uv})_2^* error .
\end{equation}
The vectors of the structure (\ref{eq:smallRHS}) are building the vector $\mathbf{s}_2$  in (\ref{zd:eq:VF-LS1}).
Furthermore, using (\ref{eq:concurrent-QRs}), 
$$
(R_{uv})_{22} = (Q_{uv})_2^* \underbrace{\mathrm{diag}(\mathbb{S}(u,v,1:\ell))}_{\ell\times\ell} \Cauchy = 
(Q_{uv})_2^* \left\{\left[(\Cauchy x + error)\begin{pmatrix} 1 & \ldots& 1\end{pmatrix}\right] \circ \Cauchy \right\} ,
$$
where $\circ$ denotes the Hadamard product. Hence, a $j$th column of $(R_{uv})_{22}$ reads
$${\displaystyle 
	(R_{uv})_{22}(:,j) =(Q_{uv})_2^* \left\{ \left[ (Q)_1 (R)_{11} x + error\right] \circ 
	\left(\begin{smallmatrix} {1}/{(\xi_1 - \pol_j)} \cr \vdots \cr {1}/{(\xi_\ell - \pol_j)}\end{smallmatrix}\right)\right\}},
$$
which means that $(R_{uv})_{22}(:,j)$ could also be small, depending on the
position of $\lambda_j$ relative to the $\xi_i$s. 
The \textsf{LS} coefficient matrix $\mathbf{B}_{[22]}$ in line 6. of Algorithm \ref{zd:ALG:MIMOVF-basic} is assembled from the matrices $(R_{uv})_{22}$, and we can expect that it will have many small entries that are (when computed in floating point arithmetic) mostly contaminated by the roundoff noise. We illustrate this on an example.
\begin{example}\label{Ex:Structure-B}
	{\em 
		We use the one-dimensional heat diffusion equation model \cite{NICONET-report}, obtained by spatial discretization
		of 
		$
		\frac{\partial}{\partial t}T(x,t) = \alpha \frac{\partial^2}{\partial x^2} T(x,t) + u(x,t),\;\;0<x<1,\;\; t>0
		$
		with the zero boundary and initial conditions. The discretized system is of order $n=197$. We generate $\ell=1000$ samples and set $r=80$. The structures  of $\mathbf{B}_{[22]}$ and its column pivoted triangular factor are given in Figure \ref{FIG-Structure-BQR-Heat-1}. 
		The column norms of $\mathbf{B}_{[22]}$ in the first step are so particularly ordered due to the ordering of the initial poles  $\lambda_j = \alpha_j \pm \imunit \beta_j$, where the $\beta_j$s are logarithmically spaced between the minimal and the maximal
		sampling frequency and $\alpha_j=-\beta_j$.		
		Note the sharp drop in the column norms in the second iteration, after the relocated poles induced better approximation.
		\begin{figure}
			\begin{center}
				\includegraphics[width=2.9in,height=2.8in]{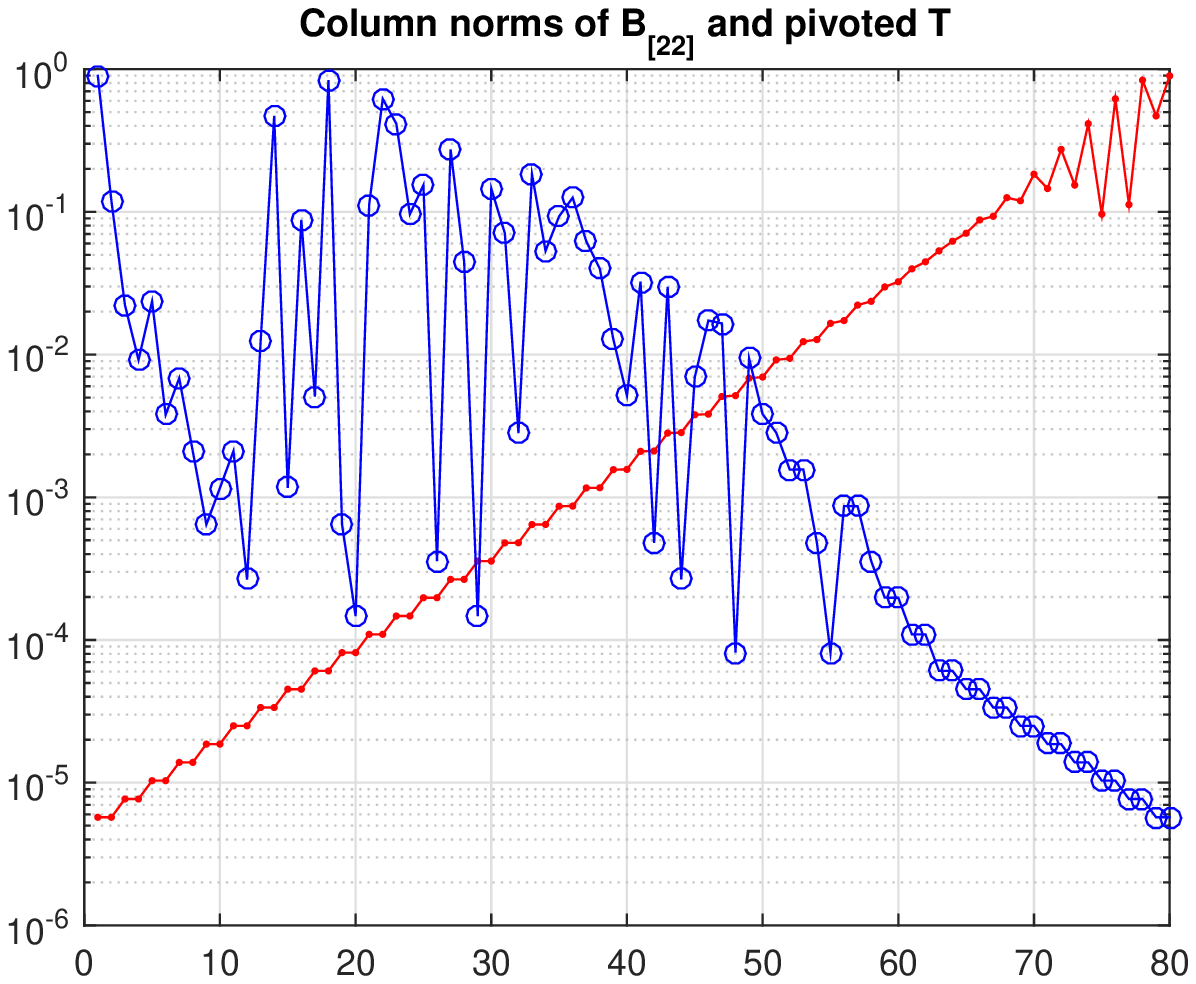}
				\includegraphics[width=2.9in,height=2.8in]{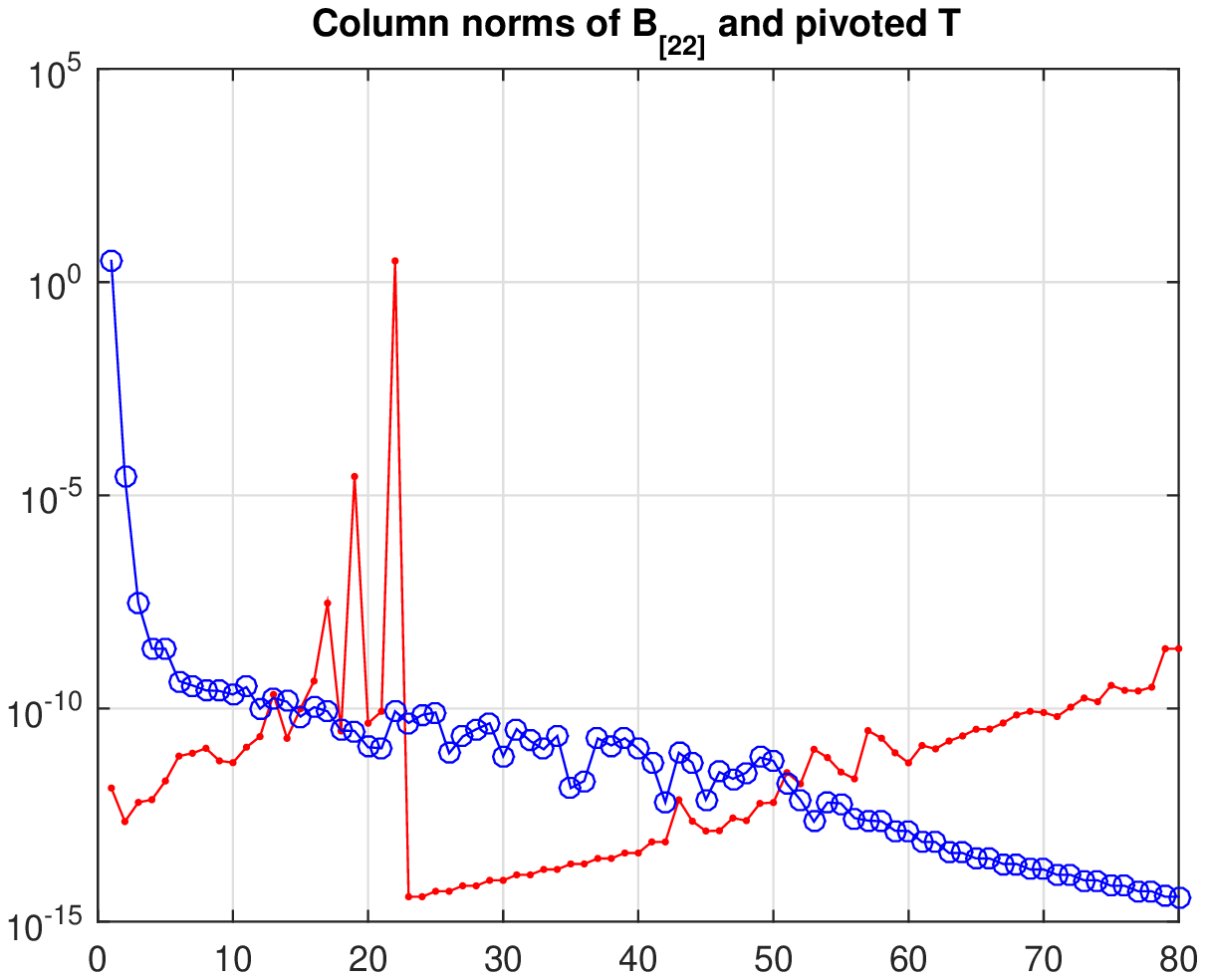}
			\end{center}
			\caption{\label{FIG-Structure-BQR-Heat-1} The structure of the matrix $\mathbf{B}_{[22]}$ and its column pivoted triangular factor in line 6. during the first two iterations in Algorithm \ref{zd:ALG:MIMOVF-basic}. In the first plot, showing the data in the first iteration, the column norms  of $\mathbf{B}_{[22]}$ are marked with (red) $\cdot-$ and the column norms its triangular factor in the Businger-Golub pivoted QR factorization are marked with (blue) $\circ-$. 
				The  second plot shows the same information, but in the second iteration.}
		\end{figure}
	}
\end{example}

\subsection{The Quandary of Column Scaling}\label{SS=ColumnScaling}
In the \textsf{VF} literature, it is often recommended to scale the columns of the \textsf{LS} coefficient matrix in line 6 of Algorithm \ref{zd:ALG:MIMOVF-basic}, to make them all of the same Euclidean length,  before deploying the \emph{backslash} solver, and then to rescale the solution, see, 
%
%
e.g., \cite{Gustavsen-EMC-2007}.  One desirable effect of this column equilibration step is to reduce the effective condition number of \textsf{LS} coefficient matrix (see e.g., \cite{lawson1974solving,sluis1969condition}).   While this can be beneficial, nonetheless this tactic also may have a variety of deleterious effects and, in our opinion, must be considered with caution.  Of foremost concern, following the discussion from \S \ref{SS=noise}, is that scaling noisy matrix columns effectively increases the influence of noise, and allows these noisy columns to participate in the column pivoting process of the QR factorization, with a possibility that some of them become drafted and taken upfront as important.  This, in turn, interferes with the rank revealing process and possibly precludes truncation based on a partition 
 as in (\ref{eq:RRQR}). For illuminating discussions related to this issue we refer to \cite{Golub:1976:RDL:892104}, \cite{Golub198041}, \cite{Stewart-Rank-1993}. In the following two examples we illustrate the potentially baleful effects of column scaling in the particular context of  
 \textsf{VF} iteration. 

\begin{example}\label{NumEx-2}
{\em 
We continue using the heat model from Example \ref{Ex:Structure-B}.
In Figure \ref{FIG-Structure-diagR-Heat-1} we show, for the first two iterations, the moduli of the diagonal entries ($|T_{ii}|$) of the triangular factors of $\mathbf{B}_{[22]}$ (appearing in line 6 of Algorithm \ref{zd:ALG:MIMOVF-basic}) without and with column equilibration. Note that, due to the diagonal dominance (\ref{eq:Tii}), the distribution of $|T_{ii}|$ is decisive for numerical rank revealing. 

\begin{figure}
	\begin{center}
		\includegraphics[width=2.9in,height=2.8in]{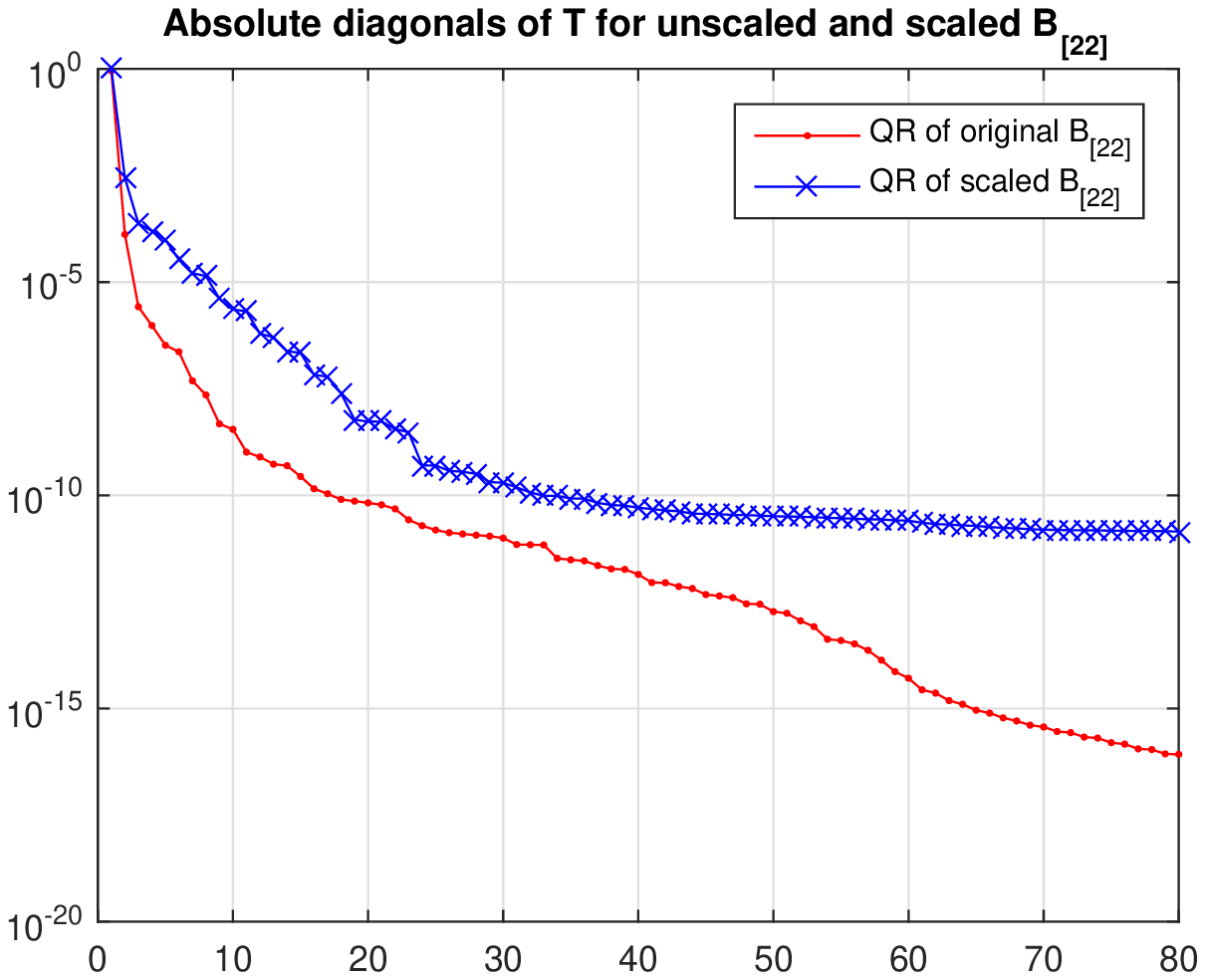}
		\includegraphics[width=2.9in,height=2.8in]{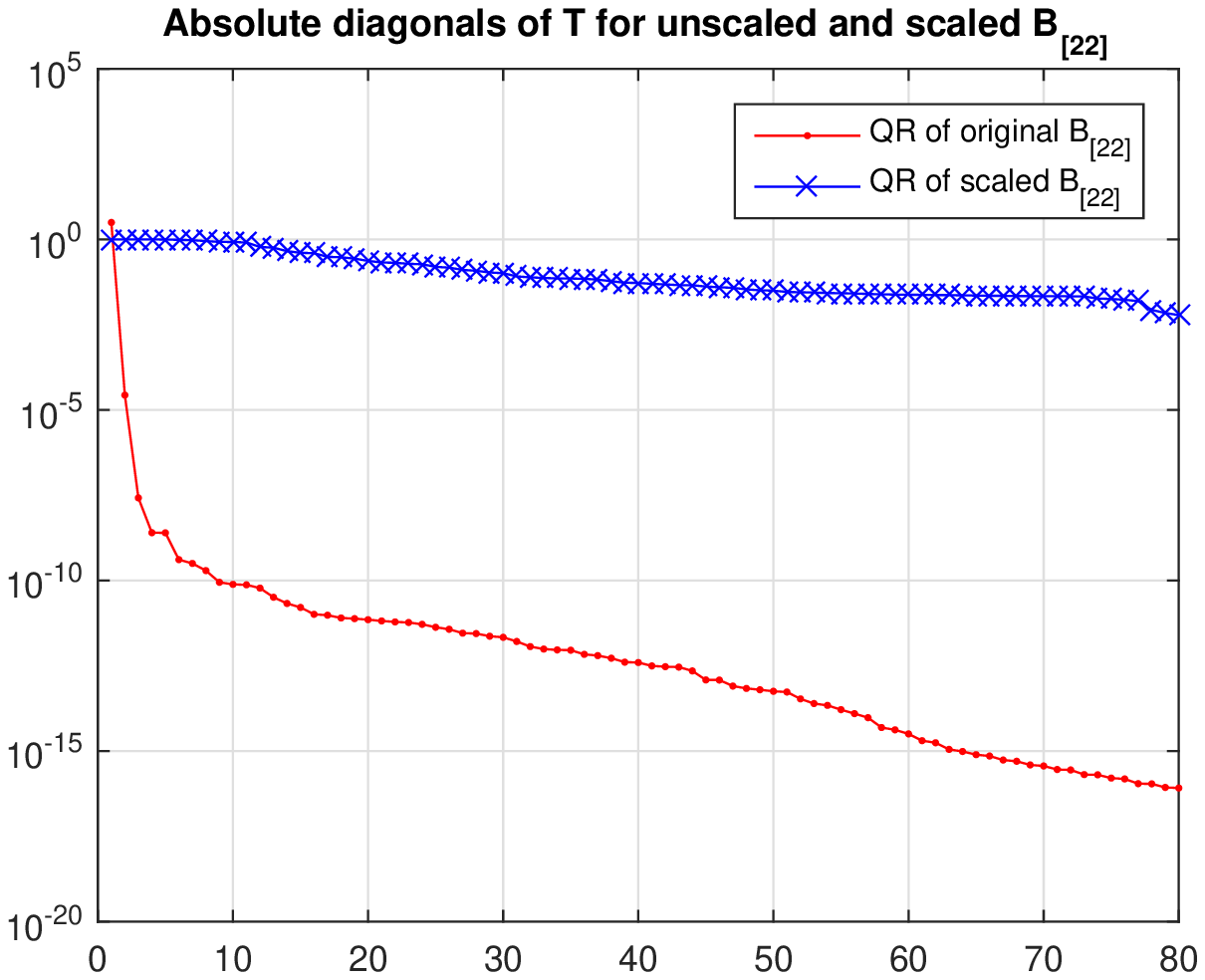}
	\end{center}
	\caption{\label{FIG-Structure-diagR-Heat-1} (Example \ref{Ex:Structure-B} cont.) The structure of the pivoted triangular factors (cf.(\ref{eq:Tii})) of $\mathbf{B}_{[22]}$ in the first two iterations in Algorithm \ref{zd:ALG:MIMOVF-basic}. The plot shows the values $|T_{ii}|$ of the unscaled $\mathbf{B}_{[22]}$ (marked with (red) $\cdot-$) and of the column equilibrated  $\mathbf{B}_{[22]}$ (marked with (blue) $\times-$). }
\end{figure}

We now illustrate how the numerical rank deficiency may be manifested during the \textsf{VF} iterations. 
		We solve the \textsf{LS} problems for the $\bfvarphi^{(k)}$s using only a simple modification of \textsc{matlab}'s \emph{backslash} operator: first reorder the equations so that the rows of the coefficient matrix have decreasing $\ell_\infty$ norms, and then apply \emph{backslash}.\footnote{Recall the discussion in \S \ref{SS=LS+QR}.}
This stabilizes the \textsf{LS} solution process in much the same way as does Powell-Reid complete pivoting 
		(see \cite{cox-hig-98, higham2000qr}).		
		We  use $1000$ frequencies and $r=80$. The samples are matched perfectly with both our implementation of 
		\textsf{VF} and \textsf{vectfit3} \cite{vectfit3} (up to relative errors of the order of $10^{-13}$).  The first plot in Figure \ref{FIG-Heat-HSVD}
		shows the structure of the denominators $\bfvarphi^{(k)}$ throughout ten iterations. 
		Each $\bfvarphi^{(k)}$ is represented
		by the sorted vector of $\log_{10}(|\bfvarphi^{(k)}|/\|\bfvarphi^{(k)}\|_1)$, and the zero entries of $\bfvarphi^{(k)}$ are not shown. For $k=1,\ldots, 10$,
		the values of $\textsf{sum}(|\bfvarphi^{(k)}|/\|\bfvarphi^{(k)}\|_\infty > \roff)$ are,  respectively,
		$53$,    $43$,    $38$,    $30$,    $44$,    $44$,    $44$,    $40$,    $42$,    $44$. (If we restart the approximation with $r=53$, the number of nonzero coefficients throughout the iterations are $40$,    $36$,    $38$,    $41$,    $41$,     $42$,    $41$,    $41$,    $39$,    $41$.)
		To interpret these numbers, we compute the Hankel singular values $\sigma_1\geq\cdots\geq\sigma_{197}$ and superimpose them on the graph as 
		$\log_{10}(\sigma_i/\sigma_1)$ -- those values marked by $\diamond$.
		 Since the $\sigma_i$s are forming a ``devil's staircase" and there is no clear cutoff index. 
		For instance,  $\sigma_{30}/\sigma_1\approx 1.28e-11$,  $\sigma_{36}/\sigma_1\approx
		2.17e-13$,  $\sigma_{44}/\sigma_1\approx 1.12e-16$,  $\sigma_{53}/\sigma_1\approx 9.00e-17$. 
		(If we use the backslash without the initial row pivoting, the numbers of nonzero coefficients throughout the iterations are $53$, $44$, $37$, $42$,    $50$, $50$, $50$, $49$, $49$, $50$.)

Recall from Proposition \ref{PROP1} that the barycentric nodes corresponding to $\varphi_{j}^{(k)}=0$ are the poles of the current approximant $\bfH_r^{(k)}$ and that the corresponding residues are accessible 
		by an explicit formula (\ref{eq:residues}), thus allowing an estimate of the contribution of the pole and perhaps discarding it and reducing $r$.
		The matching of the number of the untruncated entries in the \textsf{LS} solutions $\bfvarphi^{(k)}$ with the number of significant Hankel singular values is striking and may offer machinery to readjust the order of the  approximant, $r$, during the iterations. 
		This matching cannot be guaranteed in general but offers a great promise.
		 Full understanding of the potential of the \textsf{VF} for determining the order of the underlying system (e.g., as in the Loewner framework \cite{Mayo-2007}) remains an important open problem. 
		The right-hand side plot  in Figure \ref{FIG-Heat-HSVD} also shows $\log_{10}(|\bfvarphi^{(k)}| /\|\bfvarphi^{(k)}\|_1)$, but with the column scaling
		of the least squares coefficient matrix and rescaling the solution, as in \textsf{vectfit3}. Note that, once the scaling is applied, 
		connection to the  Hankel singular values decay is lost; supporting our discussion on the undesirable  effects of column scaling in \textsf{VF}.
		\begin{figure}
			\begin{center}
				\includegraphics[width=2.9in,height=2.5in]{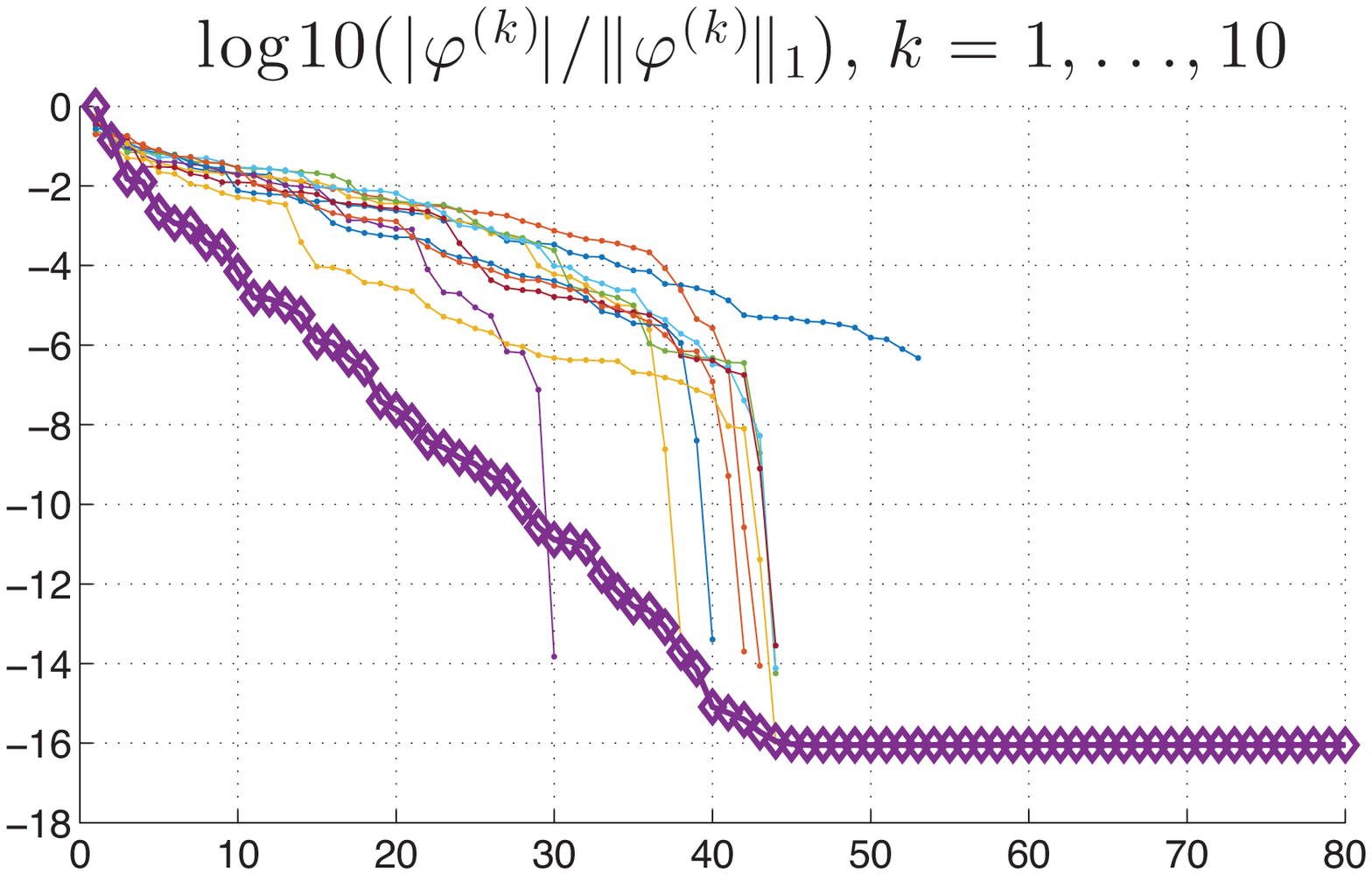}
				\includegraphics[width=2.9in,height=2.5in]{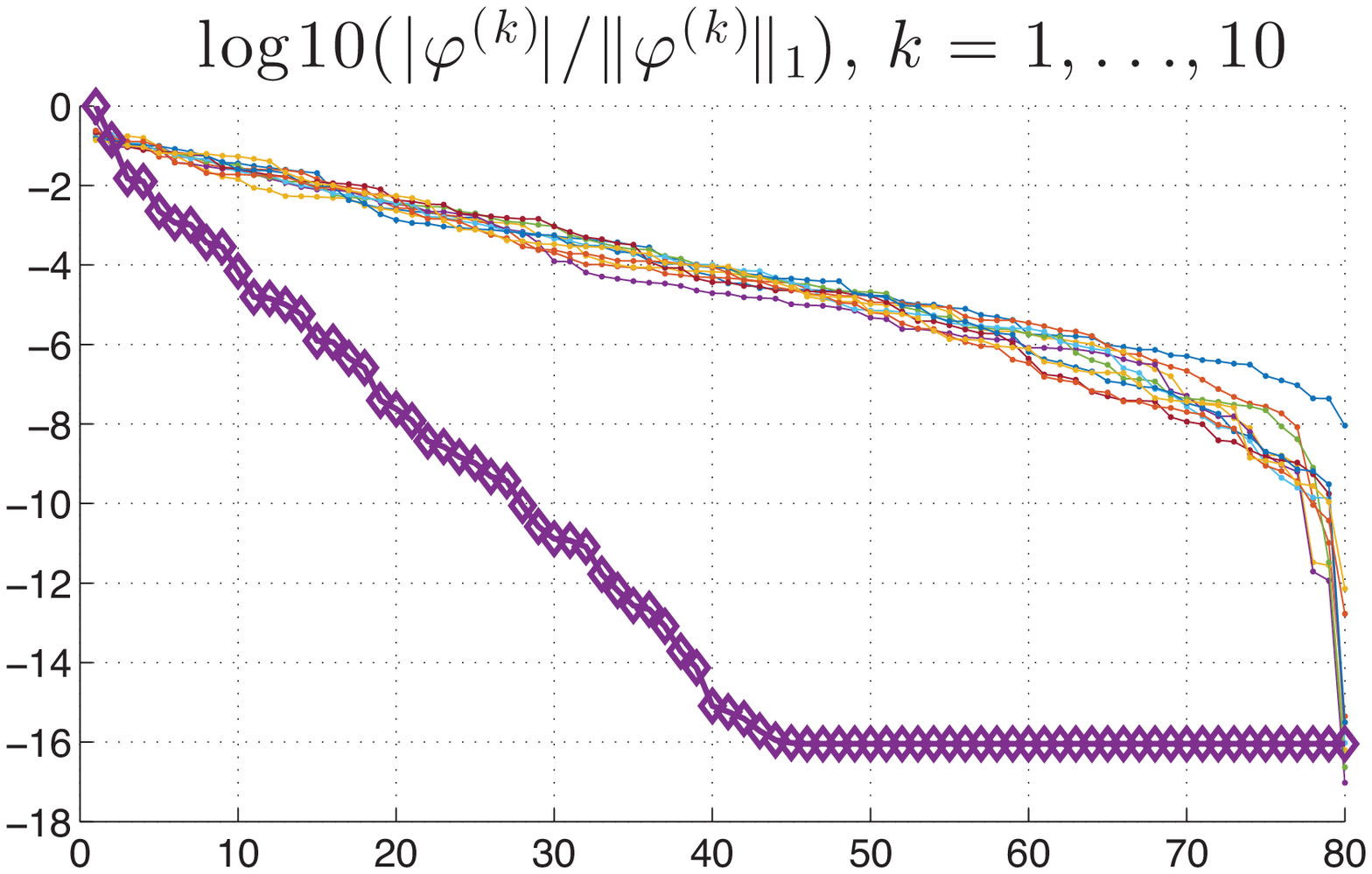}
			\end{center}
			\caption{\label{FIG-Heat-HSVD} (Example \ref{NumEx-2}.)
History of the first 10 iterations, $k=1,\ldots,10$.  Plots show $log_{10}(|\varphi_{j}^{(k)}|//\|\bfvarphi^{(k)}\|_1)$ vs $j$;  only nonzero coefficients are shown. Normalized Hankel singular values are represented as diamonds. 
				}
\end{figure} 
	}
\end{example}		
%
%

\subsection{ \textsf{mimoVF}: Putting the pieces together}
In this section, based on our preceding analysis  of \textsf{VF} for matrix-valued rational approximation problem,
we start testing our new implementation of  \textsf{VF}. We will call  the new implementation and the corresponding   \textsc{matlab} toolbox \textsf{mimoVF}. This section will provide examples for verification and validation of \textsf{mimoVF}.
We will compare our implementation with the original \textsf{vectfit3} \cite{vectfit3}\footnote{The options used deploy the relaxed vector fitting technique \cite{Gustavsen-2006}.}  and show that our proposed modifications based on the theoretical analysis can substantially improve the results.  While on average \textsf{vectfit3}
performs well, in the ill-conditioned cases, it has difficulties with numerical issues addressed in this paper. 

For the resulting rational approximation $\bfH_r$, define the tensor $\HT_r(:,:,i)=\bfH_r(\xi_i)$, $i=1,\ldots, \ell$,
and the relative \textsf{LS} error as $\bfgamma = \|\HT - \HT_r \|_F / \|\HT\|_F$. 
Recall that $\HT(:,:,i)=\bfH(\xi_i)\in\Cplx^{p\times m}$, $i=1,\ldots, \ell$, contains the original samples that are 
either measurements, or computed from a state space realization of the underlying LTI dynamical system.

\subsubsection{A stress test}  \label{NumEx-01}
{
We consider a model for the ISS 1R module \cite{NICONET-report}  with $m=p=3$. The underlying dynamical system has dimension $n=270$. This example presents a  difficult test case with rather vivid dynamics. The model is very hard to approximate  and presents significant challenges to model reduction; see \cite{gugercin2001iss}. To that end, we choose $r=50$ and take $\ell=300$ samples. The initial barycentric nodes are chosen as the eigenvalues of a pseudo-random real stable $r\times r$ matrix, a potentially poor initialization. Using these initial nodes, two iteration steps are taken both in \textsf{vecfit3} and \textsf{mimoVF}.  The goal of this example is to illustrate that once the computations in each iteration step are made more robust (following our preceding analysis), high-fidelity rational approximants can still be achieved even with a small number iteration count or even in the cases of poorly initial choices of barycentric nodes. 
%
%
%
The results of \textsf{vectfit3} and \textsf{mimoVF} are shown on Figure \ref{FIG-compare-to-vf3-ISS270_2iter}
where we depict the amplitude frequency response plots for the data and for the rational approximants. Note that this model has 
$m=3$ inputs and $p=3$ outputs, there are $9$ input/output channels corresponding to the different lines in Figure \ref{FIG-compare-to-vf3-ISS270_2iter}.
The figure clearly illustrates the \textsf{mimoVF} performs significantly better than \textsf{vecfit3} for this example.
Recall that both functions are given the same set of initial poles. Despite this unfavorable choice of initial poles, a restricted number of iterations, and 
ill-conditioned \textsf{LS} matrices, \textsf{mimoVF} succeeds to compute a model with relative \textsf{LS} error below $\gamma\approx 6.45\cdot 10^{-3}$. On the other hand, the relative error due to \textsf{vectfit3} is  
$\gamma \approx 10.41$; a significantly higher value than the error due to \textsf{mimoVF}. 
It is possible that 
 allowing \textsf{vectfit3} to iterate further might realign the poles better, leading to a smaller \textsf{LS} error and an accurate approximate. But, of course, this comes with additional costs since every step of the  iteration requires to solve a potentially large-scale \textsf{LS} problem; especially when $m$ and $p$ are large. Therefore, any reduction in the iteration count is a gain in terms of computational efficiency.
\begin{figure}
	\begin{center}
		\includegraphics[width=2.9in,height=2.8in]{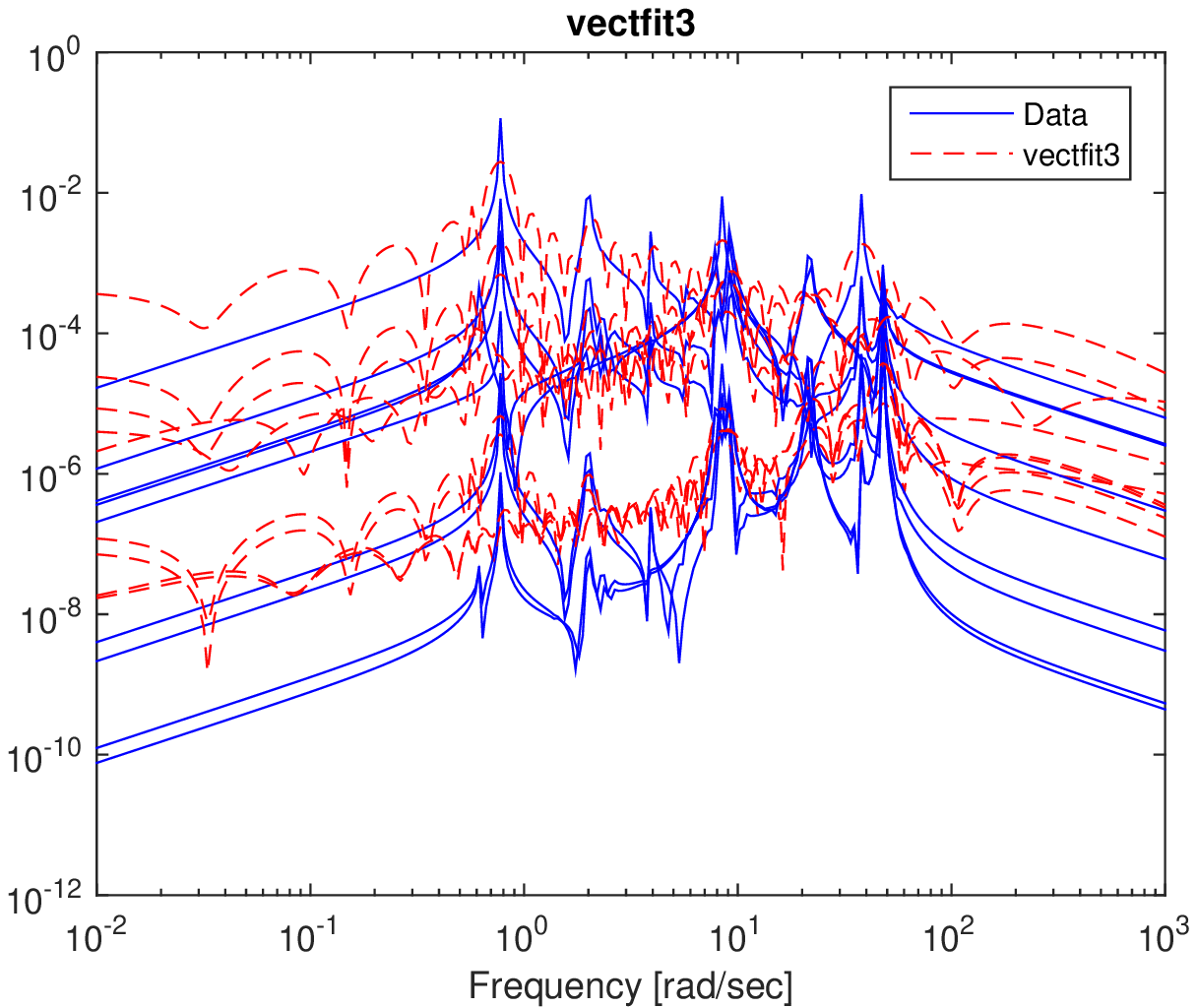}
		\includegraphics[width=2.9in,height=2.8in]{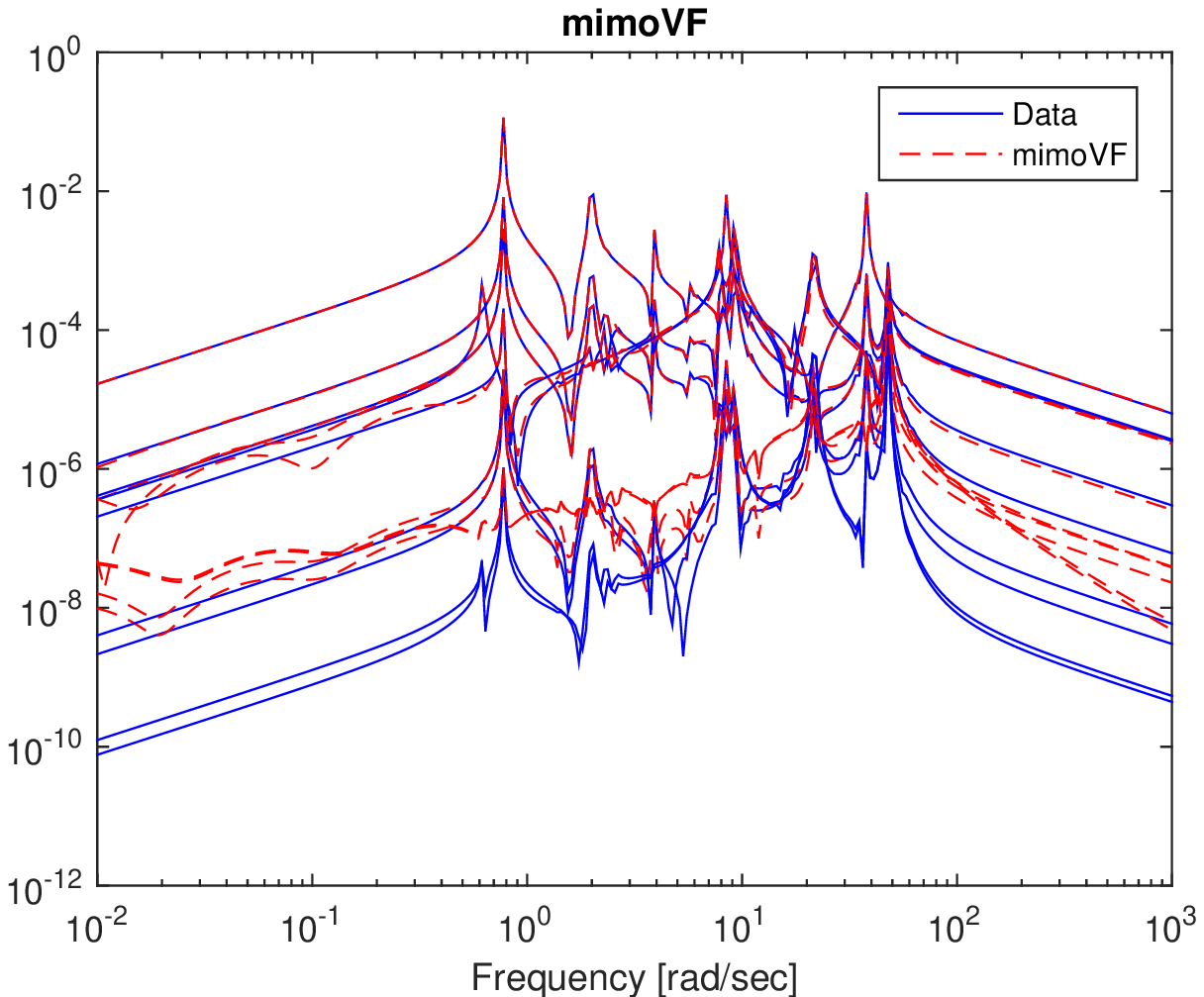}
	\end{center}
	\caption{\label{FIG-compare-to-vf3-ISS270_2iter} 
		(Example of \S \ref{NumEx-01}.) 
		Comparison of  \textsf{mimoVF} and the \textsf{vectfit3} on the ISS 1R module with initial poles set as the eigenvalues of a pseudo-random stable	real matrix,  $\ell=300$ and $r=50$. 
		\textcolor{black}{(The frequency response magnitudes of each of the possible nine input/output pairings is plotted in solid blue; the corresponding frequency response magnitudes from rational approximations provided by \textsf{vectfit3} and \textsf{mimoVF} appear as dashed red lines.)  } 
The purpose of the experiment is to check the robustness of the numerical implementation in the case of unpropitious distribution of the barycentric nodes.  The first plot shows the output of \textsf{vectfit3}
		(with the relative error $\gamma > 10$), and the second of \textsf{mimoVF} ($\gamma < 10^{-2}$), both after two iterations. 
	}
\end{figure} 
}

\subsubsection{How to compute the residues in the final step}\label{SS=HALA}
One of the advantages of the barycentric implementation of the \textsf{SK} iterations over the original approach using polynomial representations is in the avoidance of high powers of $\xi_i$ (which may cause overflow and underflow in finite precision arithmetic) as producing ill-conditioned Vandermonde matrices. The additional scalings by $1/|d^{(k)}(\xi_i)|^2$, which is another potential source of ill-conditioning, has been elegantly removed by the \textsf{VF} formulation and compensated by reallocating the barycentric nodes $\lambda_j^{(k)}$. However, once the \textsf{VF} 
iterations are completed, one needs to solve for the final residues $\bfPhi$ in Line 9. of Algorithm \ref{zd:ALG:MIMOVF-basic} for the converged poles. This step needs to be performed carefully as the coefficient matrix that determines the residues for given set of poles is a Cauchy matrix, which, together with  Vandermonde matrices, is among the most notoriously ill-conditioned matrices. To illustrate, the spectral condition number of an arbitrary $100\times 100$ real Vandermomde matrix is larger than $3\cdot 10^{28}$, and the condition number of the 
$100\times 100$ Hilbert (Cauchy) matrix is more than $10^{150}$. The column norms of the latter are between $0.07$ and $1.3$, thus no column scaling can substantially reduce the condition number. Furthermore, the additional weightings $\qw_i$ (whose values may spread many orders of magnitude) may further worsen the conditioning of the least squares coefficient matrix. 
This all is a menace to the final computed residues, in particular when the order $r$ is sufficiently high and in cases of unfavorably distributed nodes. This issue has to be addressed if the method is to be applied to truly challenging problems with complex dynamics and of high orders, for instance, for $m, p, r$ in hundreds. In this section we focus our attention to the very last step -- given poles of a rational approximant, how to best numerically extract  the residues.

\begin{example}\label{NumEx-01-A} 
	{\em We continue to use the ISS 1R example from \S \ref{NumEx-01}. However, in this case, we choose 
good initial poles of the form $\lambda_j = \alpha_j \pm \imunit\beta_j$, where the (positive) $\beta_j$s are log-spaced over the frequency sample interval and $\alpha_j=-\beta_j$ as often recommended in the \textsf{VF} literature for good initial pole selection. 
 We take $\ell=500$ and choose $r=100$. Recall that the underlying system has dimension $n=270$ with 
 $p=m=3$; therefore with $r=100$ and $\ell=500$ samples, one expects to obtain a very good approximation.
 The  amplitude frequency response plots are depicted in  Figure \ref{FIG-compare-to-vf3-ISS270_2iter_logpoles} for both \textsf{vecfit3} and \textsf{mimoVF}, once again illustrating that
 \textsf{mimoVF} outperforms \textsf{vecfit3}; the relative \textsf{LS} errors due to \textsf{mimoVF} and \textsf{vecfit3} are respectively, $9.47\cdot 10^{-1}$ and  $4.90\cdot 10^{-3}$.
  In this case, \textsf{vecfit3} suffers from the numerical ill-conditioning of the final residue computation.
Thus, this example  shows that even a plenty of good initial barycentric nodes, one does not necessarily guarantee a good approximation due to the numerical issues arising in the residue computation step.
\begin{figure}
	\begin{center}
		\includegraphics[width=2.9in,height=2.8in]{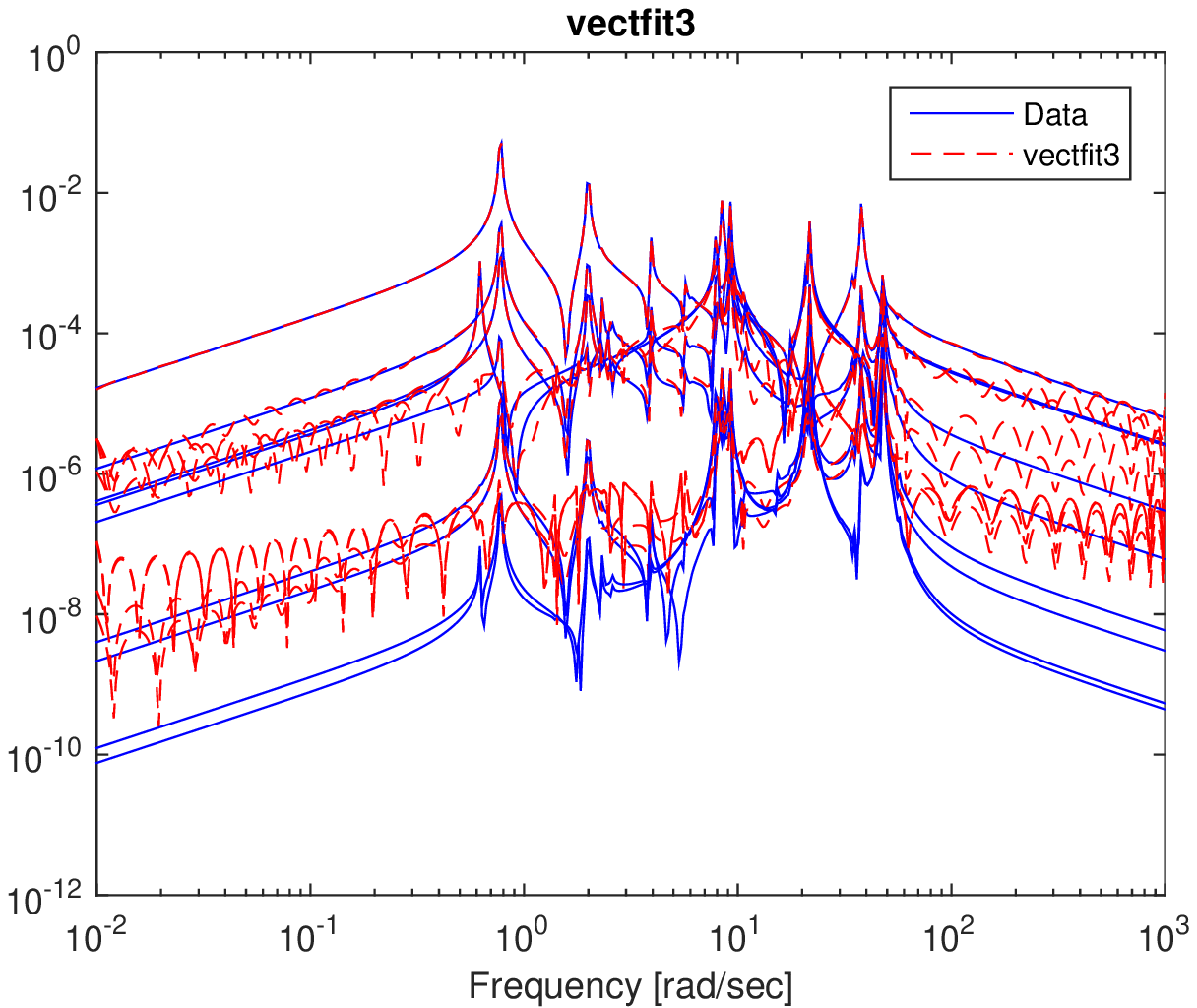}
		\includegraphics[width=2.9in,height=2.8in]{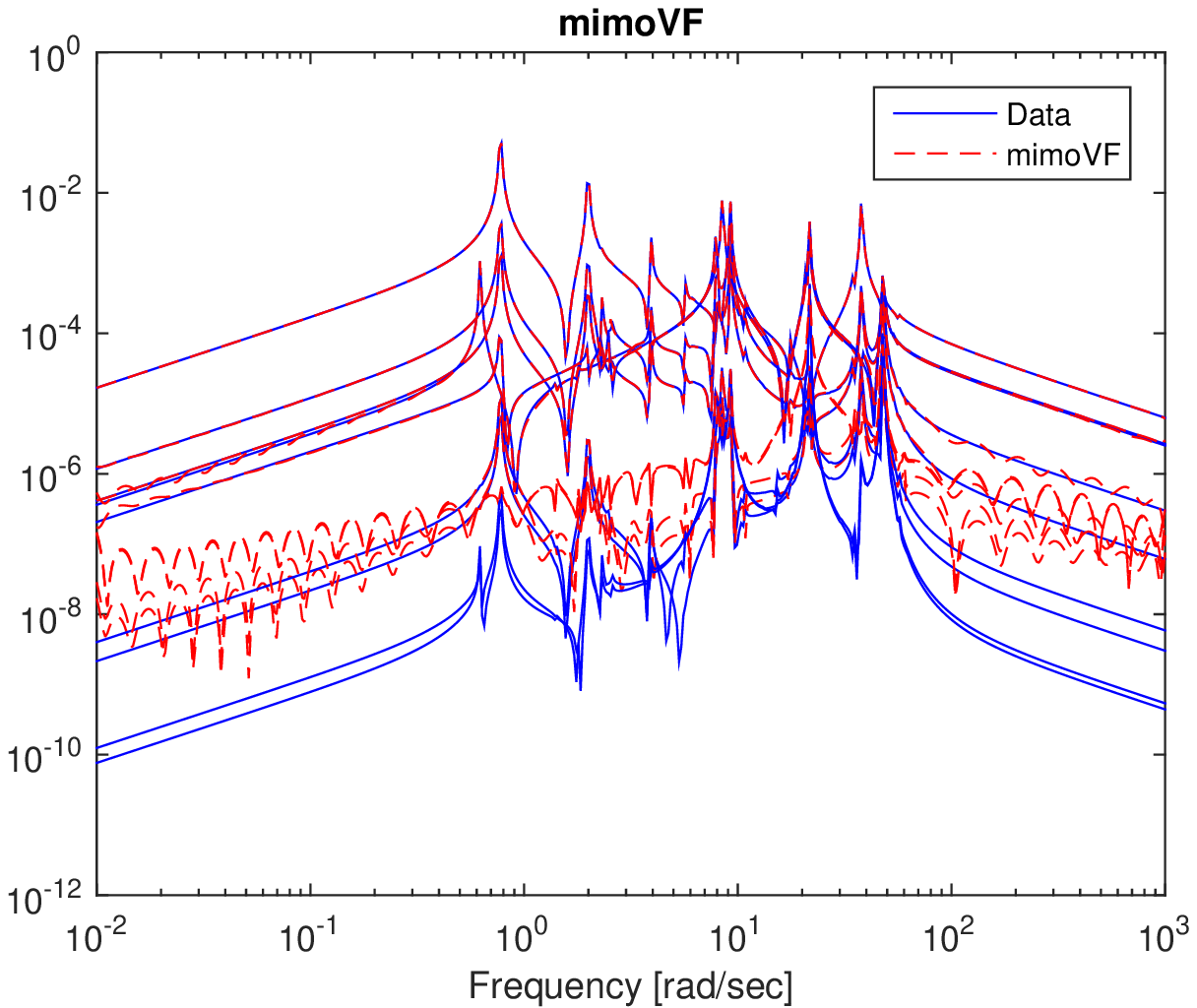}
		\includegraphics[width=2.9in,height=2.8in]{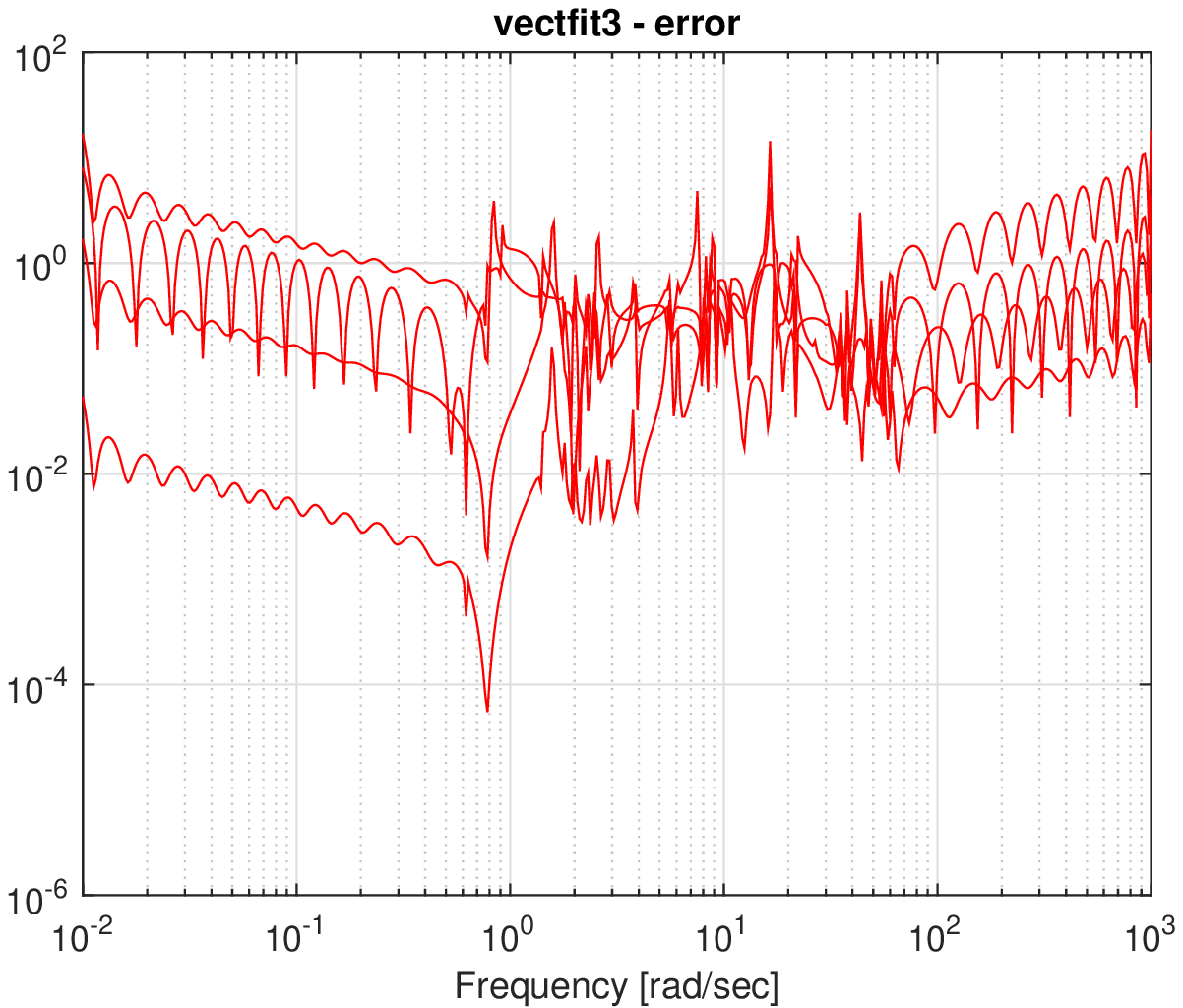}
		\includegraphics[width=2.9in,height=2.8in]{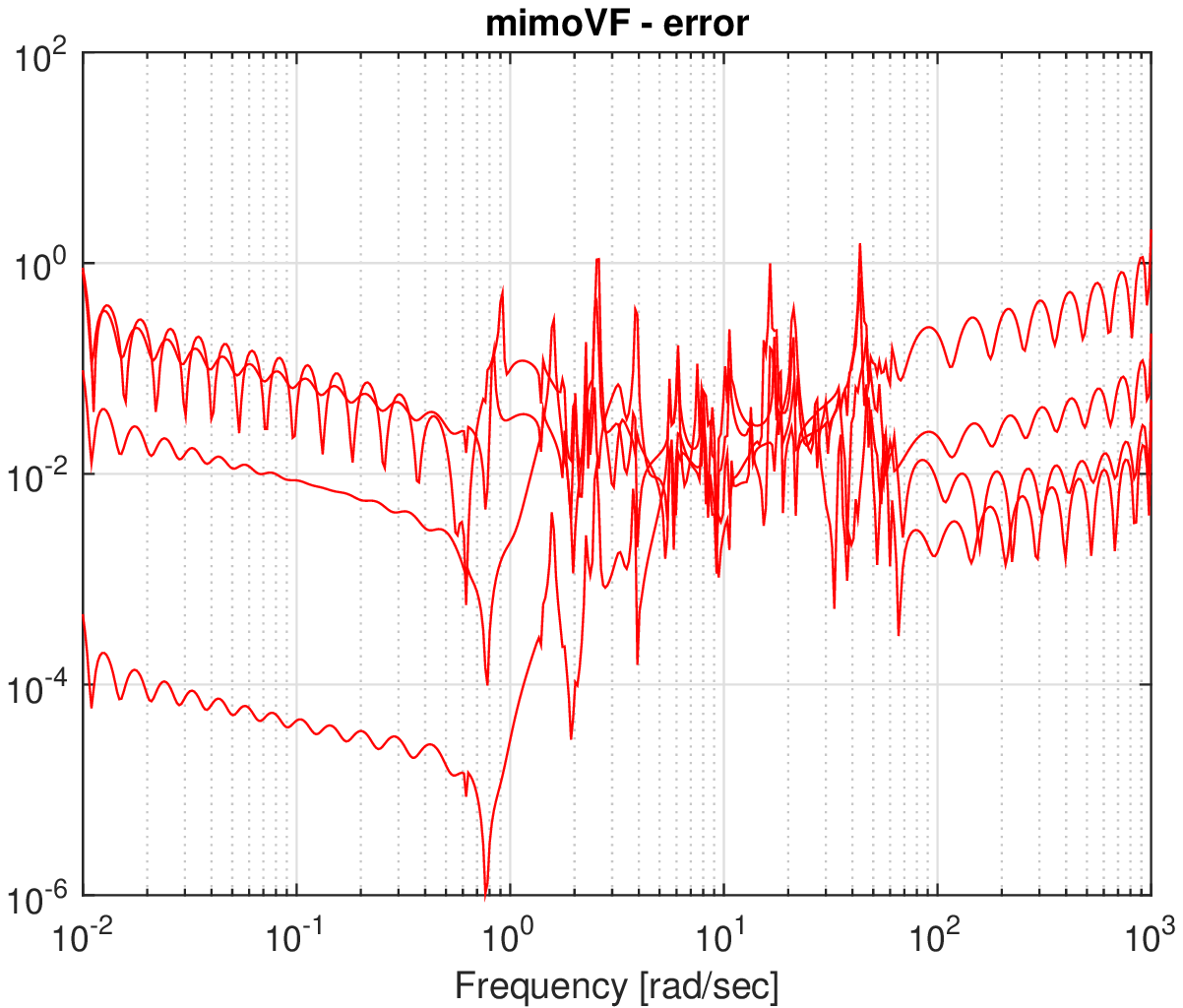}	
	\end{center}
	\caption{\label{FIG-compare-to-vf3-ISS270_2iter_logpoles} 
		(Example \ref{NumEx-01-A}.) 
		Comparison of  \textsf{mimoVF} and the \textsf{vectfit3} on an ISS example with initial poles set as log-spaced (imaginary parts log-spaced over the frequency range). The first plot shows the output of \textsf{vectfit3}
		(with the relative error $\gamma \approx 9.47\cdot 10^{-1}$), and the second of \textsf{mimoVF} ($\gamma \approx 4.90\cdot 10^{-3}$) after one iteration. In this example, $\ell=500$ and $r=100$. The second row shows relative approximation error $|\bfH(\xi_i)_{uv}-\bfH_r(\xi)_{uv}|/|\bfH(\xi_i)_{uv}|$ for the four dominant input-output pairs over all samples.  Since the order of the underlying system is $n=270$ and $p=m=3$, $r=100$ should provide good approximation.
	}
\end{figure} 
}
\end{example}
\subsubsection*{A regularization approach for residue extraction}
Even though \textsf{mimoVF} performed relatively well in Example \ref{NumEx-01-A},
 some of the less-dominant input/output pairs were not captured as accurately as one would prefer; see Figure \ref{FIG-compare-to-vf3-ISS270_2iter_logpoles}. In this section, we consider a regularization technique to further improve the final residue extraction step in \textsf{mimoVF}. 
Recall Line 9. of Algorithm 
 \ref{zd:ALG:MIMOVF-basic} to compute the final residues: solving $\bfPhi = \bfB_{[11]}^{-1} \mathbf{s}_1$. As noted in Remark \ref{REM:OnLine9} this corresponds to the simultaneous determination of the residue matrices by solving  $\|\dqw \left(\Cauchy^{(k+1)}\varPhi^{(k+1)}(u,v,:) -  \mathbb{S}(u,v,:)\right)\|_2\longrightarrow\min$, $u=1,\ldots,p$, $v=1,\ldots,m$.	
 To simplify the notation, denote this \textsf{LS} problem by $\|\dqw \Cauchy x -h\|_2\longrightarrow \min$, where
  $\Cauchy=\Cauchy_{\bfxi,\bflambda}$ is a Cauchy matrix as in (\ref{eq:A=PC}), $h$ is the corresponding scaled right-hand side,
  $\bflambda$ is closed under conjugation and  and the solution vector should also be closed under conjugation. 
Such a constrained problem can be replaced by an equivalent unconstrained \textsf{LS} problem
\begin{equation}
\left\| \begin{pmatrix} \dqw \Cauchy_{\bfxi,\bflambda} \cr
\dqw \Cauchy_{\cc{\bfxi},\bflambda} \end{pmatrix} x  - \begin{pmatrix} h \cr \cc{h} \end{pmatrix} \right\|_2 \equiv \|\widehat{\Cauchy} x - \widehat{h} \|_2 \longrightarrow\min 
\end{equation}
with the coefficient matrix again of the diagonally scaled Cauchy structure,
$\widehat{\Cauchy} = (\dqw\oplus\dqw) \Cauchy_{(\bfxi,\cc{\bfxi}),\lambda}$.

The SVD of $\widehat{\Cauchy}$ can be computed to high relative accuracy based on the  pivoted LU decomposition $\Pi_1 \widehat{\Cauchy}\Pi_2=LDU$, where
each entry (including the tiniest ones) of the computed factors $\widetilde{L}$, $\widetilde{D}$, $\widetilde{U}$ is computed to high relative accuracy, and $\widetilde{L}$, $\widetilde{U}$ are well conditioned. The ill-conditioning of
$\widehat{\Cauchy}$ is revealed in the diagonal matrix $\widetilde{D}$.
This decomposition can be used immediately in an \textsf{LS} solver \cite{Castro-GonzalezCDM13}, or it can be used to compute an accurate SVD 
\cite{dgesvd-99}, \cite{Demmel-99-AccurateSVD}, \cite{drm-ves-VW-1, drm-ves-VW-2}
which is then used to compute an approximate \textsf{LS} solution. 
$\widehat{\Cauchy}$ can be severely ill-conditioned so that small changes
of $\xi_i$s and $\lambda_j$s can cause significant perturbation of the SVD. However, we can consider the values of 	$\xi_i$s and $\lambda_j$s, as stored in the machine memory, as exact and attempting to compute accurate SVD is justified.

Let $\widehat{\Cauchy}=W\Sigma V^*$ be the SVD and let the unique\footnote{Since all nodes are distinct and the poles are assumed simple, the matrix is of full column rank.} \textsf{LS} solution be
$x=V\Sigma^\dagger W^* = \sum_{i=1}^r v_i (w_i^*\widehat{h})/\sigma_i$.
Unfortunately,  an accurate SVD is  not enough to have the \textsf{LS} solution computed to high relative accuracy, and additional regularization techniques must be deployed. This is in particular important if the right-hand side is contaminated by noise. In the Tichonov regularization, we choose $\mu\geq 0$ and use the solution of $\|\widehat{\Cauchy}  x - \widehat{h}\|_2^2 + \mu^2 \|x\|_2^2\rightarrow\min$, explicitly computable as
\begin{equation}\label{eq:xmu}
x_\mu = \sum_{i=1}^r \frac{\sigma_i}{\sigma_i^2+\mu^2} (w_i^*\widehat{h}) v_i .
\end{equation}
The parameter $\mu$ can be further adjusted using the Morozov discrepancy principle \cite{morozov}, i.e., to achieve  $\|\widehat{\Cauchy}  x_\mu - \widehat{h}\|_2\approx 
\nu$, where $\nu$ is the estimated level of noise $\delta\widehat{h}$ in the right-hand side, $\nu\approx \|\delta\widehat{h}\|_2$. 


\begin{example}\label{NumEx-01-B} 
{\em
Here we continue Example \ref{NumEx-01-A}, use the same data, and apply \textsf{vecfit3} and \textsf{mimoVF} where we use the preceding regularization approach in the final residue computation step.
 Using accurate SVD of $\widehat{\Cauchy}$ we compute $x_\mu$ as in (\ref{eq:xmu}) with an ad hoc choice of  $\mu=10^{-3}$. The results after only {\it one} iteration of \textsf{mimoVF} and {\it two} iterations of \textsf{vectfit3} are shown on Figure \ref{FIG-compare-to-vf3-ISS270_2iter_logpoles2}.  \textsf{mimoVF}
 still has smaller \textsf{LS} error;  $4.90\cdot 10^{-3}$ compared to $1.77\cdot 10^{-1}$ due to \textsf{vectfit3}.
 It is important to note that  \textsf{mimoVF} achieves this better performance after only one iteration. \textsf{vectfit3} has the error of $1.77\cdot 10^{-1}$ after two iterations. 
 This shows that a more robust implementation may reduce the total number of iterations needed to reach satisfactory approximation; thus reducing the overall computational complexity.
	\begin{figure}
		\begin{center}
			\includegraphics[width=2.9in,height=2.8in]{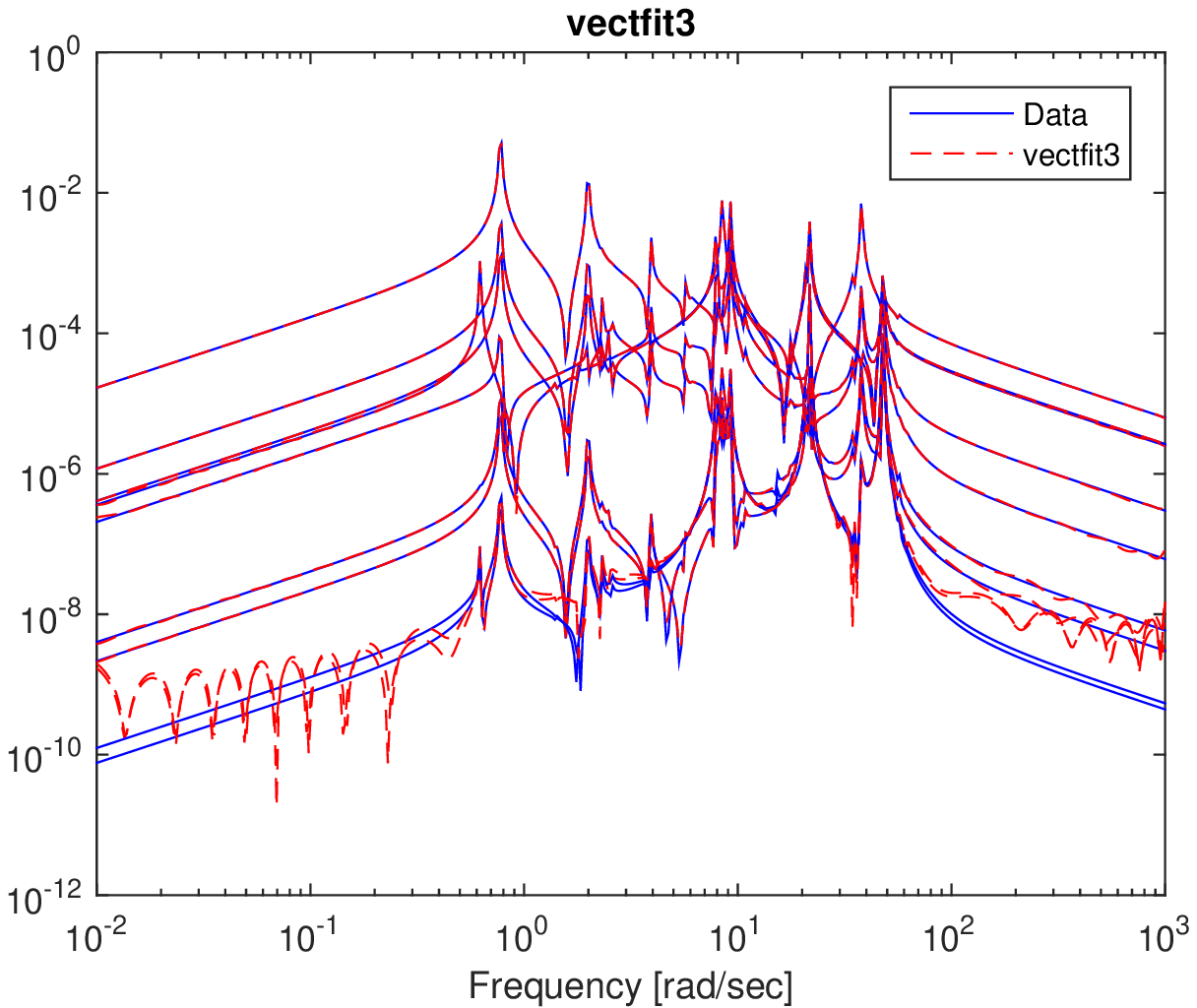}
			\includegraphics[width=2.9in,height=2.8in]{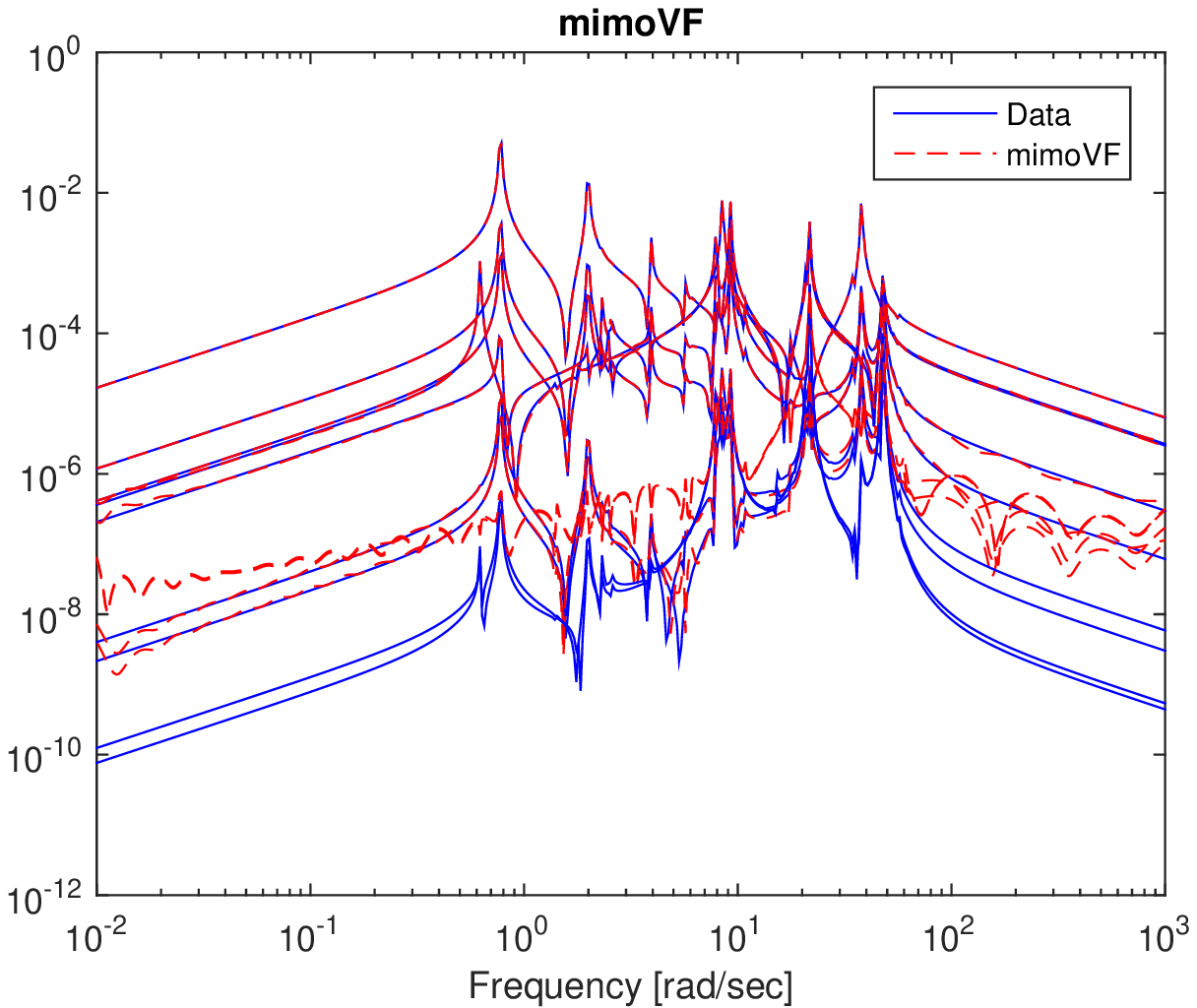}
		\end{center}
		\caption{\label{FIG-compare-to-vf3-ISS270_2iter_logpoles2} 
			Comparison of  \textsf{mimoVF} and the \textsf{vectfit3} on an ISS example with initial poles set as log-spaced (imaginary parts log-spaced over the frequency range).   In this example, $\ell=500$ and $r=100$. The first plot shows the output of \textsf{vectfit3} after \underline{two} iterations
			(with the relative error $\gamma \approx 1.77\cdot 10^{-1}$), and the second of \textsf{mimoVF} ($\gamma \approx 4.90\cdot 10^{-3}$) after \underline{one} iteration.  
		}
	\end{figure} 
	}
\end{example}

\begin{remark}
{\em
	The LDU decomposition, which is the initial step of the accurate SVD, can also be used in the pole identification phase to compute accurate QR factorization of Cauchy matrices at all stages of the computation. We have not included those modifications, for the sake of brevity of the presentation. Interested readers can find the details of such an approach in \cite{drm-00-angles}.	
}
\end{remark}

\subsubsection{Stopping criterion} \label{SS=StopCrit}
It should be noted that the stopping criterion in the \textsf{VF} framework is rather vaguely specified. To the best of our knowledge, the \textsf{VF} literature does not provide a precise and numerically justified strategy of halting the iterations. {For instance, \textsf{vectfit3} allows only running for a given
fixed number of iterations.}
	
Following our recent analysis \cite{Drmac-Gugercin-Beattie:VF-2014-SISC}, we propose to  declare the $|{\dres}_j^{(k+1)}|$s ``small enough" if
\begin{equation}
\sum_{j=1}^r \frac{|{\dres}_j^{(k+1)}|}{|\Re{e}(\pol_j^{(k+1)})|}\equiv \theta^{(k+1)} \leq \bfeps, \;\;\mbox{where $\bfeps$ is a suitable threshold.}
\end{equation}
%
%
This seems appropriate because $\max_{s\in\imunit\R}|d^{(k+1)}(s) -1|\leq \theta^{(k+1)}$, and in (\ref{eq:VF-LS-problem}) we can write
$$
 \bfS^{(i)} (1 + \sum_{j=1}^r \frac{{\dres}_j^{(k+1)}}{\xi_i-\pol_j^{(k+1)}}) = 
 \bfS^{(i)} + \Delta \bfS^{(i)},\;\; \|\Delta \bfS^{(i)}\|_F \leq \theta^{(k+1)} \|\bfS^{(i)}\|_F,
$$
thus interpreting this as introducing backward perturbation into the data. In fact, this interpretation can be a guidance for choosing the threshold value $\bfeps$ by following a discrepancy principle, i.e., so that this backward error matches the estimated size of the noise level on the input.

\subsection{Guidelines for efficient implementation}\label{SSS:LessQRs}

We now further discuss implementation details that are relevant for an efficient
software implementation of \textsf{mimoVF} or  Algorithm \ref{zd:ALG:MIMOVF-basic} in general. 
Recall that \textsf{VF} for MIMO systems with $m$ input and $p$ inputs will require 
$p\cdot m$ QR factorizations (\ref{eq:concurrent-QRs}) of size $\ell \times (2r)$ before 
advancing toward finding $\dres_j^{(k+1)}$. It is clear that this is  a demanding computational challenge even for moderate 
$m$ and $p$; e.g., in the case $m=p=100$, it will require $p\cdot m =10000$ QR factorizations of size $\ell \times (2r)$. Since these factorizations are independent, they
can be very efficiently parallelized and the whole computation can be optimized for a multi-core computing machinery.
This has been  nicely described by Chinea and  Grivet-Talocia \cite{Chinea-G.Talocia-2011}, who showed a nearly ideal speedup on
a four quad-core architecture. 

\subsubsection{Efficient computation of  $\mathbf{B}_{[22]}$}\label{SS=B22}
It was pointed out earlier that the QR factorizations in (\ref{eq:concurrent-QRs}) are independent of $(u,v)$ in the first $r$ columns and $(R^{(k+1)})_{11}$ from (\ref{eq:PHIuv}) can be computed by a single QR factorization, optionally with column pivoting, of $\dqw \Cauchy^{(k+1)}$, i.e., 
%
%
\textcolor{black}{$\dqw\Cauchy^{(k+1)} \Pi = V^{(k+1)}\left( \begin{smallmatrix} T^{(k+1)} \cr 0 \end{smallmatrix}\right)$.
  } 
Further, the introduction of rank-revealing column pivoting $\Pi$ in this factorization incurs a negligible overhead, while preserving the structure (\ref{eq:structure-partialQR}). We discussed in \S \ref{SS=LS+QR} that this pivoting is very important for numerical robustness of the \textsf{LS} solution as well. In an \textsf{LAPACK}-style implementation, the matrix $V^{(k+1)}$ can be computed and stored in form of $r$ Householder vectors (using \textsf{Xgeqp3}), and then, using 
\textsf{Xormqr}, $(V^{(k+1)})^*$ can be concurrently applied to all $-\dqw D^{(uv)}\Cauchy^{(k+1)}$, $u=1,\ldots, p$, $v=1,\ldots, m$. Then, it only remains to compute the QR factorizations of the $(r+1:\ell,r+1:2r)$ submatrices of 
$-(V^{(k+1)})^* \dqw D^{(uv)}\Cauchy^{(k+1)}$. One should note that in a blocked QR factorization a similar computation is done anyway in the process of computing the QR factorizations (\ref{eq:concurrent-QRs}). 
The computed triangular $r\times r$ factors are the $(2,2)$ blocks in (\ref{eq:concurrent-QRs}) that build
the matrix $\mathbf{B}_{[22]}^{(k+1)}$. 
Hence, the saving of this modified approach is equivalent to the cost of $(p m -1)$ QR factorizations of size $\ell\times r$, or, approximately, $const\cdot (pm-1)\ell r^2$. The total work
on the QR factorizations (\ref{eq:concurrent-QRs}) without this modification is $4\cdot const \cdot pm\ell r^2$.

Further, when we are solving only for the $\dres_j^{(k)}$ during the \textsf{VF} iterations,  the elements 
of $\mathcal{Q}^* A$, denoted by $\times$ in (\ref{eq:concurrent-QRs}) and (\ref{eq:structure-partialQR}) are not used in the pole identification phase, but they are computed as $(R^{(k+1)}_{uv})_{12}$ parts of the
QR factorizations (\ref{eq:concurrent-QRs}). On the other hand, once the poles are fixed, the \textsf{LS} problem is solved with the approximant of the form (\ref{eq:H_r(k)}),
but with the unit denominator, $d^{(k)}(s)\equiv 1$, or,  equivalently, with
$\dres_j^{(k+1)}=0$, $j=1,\ldots, r$. This means that, for computing (\ref{eq:H_r(k)-final}) by the Algorithm \ref{zd:ALG:MIMOVF-basic},
we do not compute the matrices  $(R^{(k+1)}_{uv})_{12}$, which further reduces the complexity. 
%
%
Our implementation of this more efficient approach is based on adapting the \textsf{LAPACK}'s functions \textsf{Xormqr}, \textsf{Xlarft} and \textsf{Xlarfb}. 

\subsubsection{Locally pivoted factorization}
In the presence of noise and ill-conditioning, pivoting is essential when using the QR factorization.
Hence, we propose to include pivoting in the procedure outlined in \S \ref{SS=B22}.
More precisely, the QR factorization of $\dqw \Cauchy^{(k+1)}$ is computed with column pivoting, i.e.,
$\dqw\Cauchy^{(k+1)}\Pi^{(k+1)} = V^{(k+1)}\left( \begin{smallmatrix} T^{(k+1)} \cr 0 \end{smallmatrix}\right)$. This enhances the accuracy of the computed residues in line 9. of Algorithm \ref{zd:ALG:MIMOVF-basic}. In a software implementation, Line 9. is reshaped into the \textsf{LS} problem with the coefficient matrix $T^{(k+1)}$ and with
$m\cdot p$ right-hand sides, for all input-output pairs. Optionally, one can use the truncation discussed in \S \ref{SS=LS+QR}, or the accurate SVD as explained in \S \ref{SS=HALA}.

Further, we also advocate the use of pivoting when computing the QR factorizations of the $(r+1:\ell,r+1:2r)$ submatrices of 
$-(V^{(k+1)})^* \dqw D^{(uv)}\Cauchy^{(k+1)}$. This increases the accuracy of the computed matrix $\mathbf{B}_{[22]}^{(k+1)}$ in line 6. For details how pivoting influences the accuracy of the rows of the computed triangular QR factor we refer to \cite{Drmac-Bujanovic-2008}. Furthermore, this may allow (in the cases of numerical rank deficiency, as revealed by the pivoted QR factorization and discussed in \S \ref{SS=LS+QR}) to set  certain numbers of rows of  $(R^{(k+1)}_{uv})_{22}$ to zero and thus increased the number of zero rows in $\mathbf{B}_{[22]}^{(k+1)}$ and reduced the complexity of Line 6 in Algorithm \ref{zd:ALG:MIMOVF-basic}, 
where, as described in \S \ref{SS=LS+QR}, the \textsf{LS} solver starts with the QR factorization with column pivoting. 

The pivoted QR factorization of the tall and skinny matrix $\mathbf{B}_{[22]}^{(k+1)}$ can be computed by e.g., first computing the QR without pivoting using the techniques of \cite{DBLP:journals/corr/abs-1301-1071}, and then computing the pivoted QR factorization of the computed $r\times r$ triangular factor, or e.g. as in \cite{Demmel:EECS-2013-46}.		
These approaches become particularly attractive if $p\cdot m$ and $r$ are large.

\begin{remark}
	{\em 
Interestingly, we do not need to compute the $(R^{(k+1)}_{uv})_{22}$s. Instead, we can build the matrix $\mathbf{B}_{[22]}^{(k+1)}$ from the $(r+1:\ell,r+1:2r)$ submatrices of 
$-(V^{(k+1)})^* \dqw D^{(uv)}\Cauchy^{(k+1)}$. This will lead to increased number of rows in $\mathbf{B}_{[22]}^{(k+1)}$, but overall it reduces the complexity with potential gain increased if the QR factorization of $\mathbf{B}_{[22]}^{(k+1)}$ is computed  using the strategies of \cite{DBLP:journals/corr/abs-1301-1071}, \cite{Demmel:EECS-2013-46}.
	}
\end{remark}

\section{Numerical Quadrature in \textsf{mimoVF} for discretized $\Hardy_2$ approximation}
\label{SS:quad}
The framework for \textsf{mimoVF} is  based on the algebraic least squares (LS) error minimization (\ref{basicOptProb}) where one usually chooses the weights $\rho_j=1$ and the nodes $\xi_j$ are usually selected heuristically. In \cite{Drmac-Gugercin-Beattie:VF-2014-SISC}, for single-input/single-output (SISO) systems, we have shown that 
with the underlying dynamical system in mind, reformulating the discrete \textsf{LS} problem
 as discretization of an underlying continuos  $\Hardy_2$ error measure and then choosing the nodes and weights by an appropriate numerical quadrature improves the performance of \textsf{VF} significantly. The same conclusion holds in the MIMO case as well since, once the common set of poles has been determined,  \textsf{mimoVF} works separately on each input--to-output pair. We illustrate these considerations briefly in this section.

\subsection{$\Hardy_2$ approximation and numerical quadrature}

 The algebraic least squares error is closely related to 
 the $\Hardy_2$ system norm. 
More precisely, consider the space $\Hardy_{2,+}^{p\times m}$ of $p\times m$ matrix functions $\bfM(s)$, analytic in the open right half-plane $\Cplx_+ =\{s\in\Cplx\; : \; \Im(s)> 0\}$, such that $\sup_{x>0}\int_{\infty}^{\infty} \|\bfM(x+\imunit y)\|_F^2 dy < \infty$. The space
$\Hardy_{2,+}^{p\times m}$
is a   Hilbert space
 with the associated inner product and norm defined by
\begin{equation}
\langle\bfM_1, \bfM_2\rangle_{\Hardy_2} =\frac{1}{2\pi} \int_{-\infty}^{\infty} \mathrm{Trace}\left(
\cc{\bfM_1(\imunit\omega)}\bfM_2(\imunit\omega)^T\right)\,d\omega,
\;\;\;\|\bfM\|_{\Hardy_2} = \left(\frac{1}{2\pi} \int_{-\infty}^{\infty} \left\|\bfM(\imunit\omega)\right\|_F^2\,d\omega \right)^{1/2} .
\end{equation}
 The 
$\Hardy_2$ approximation problem is, then, to find a degree-$r$ rational approximant, $\bfH_r(s)$, that minimizes the 
$\Hardy_2$ error norm $\|\bfH - \widetilde{\bfH}_r\|_{\Hardy_2}$  over all degree-$r$ rational function  $\widetilde{\bfH}_r(s)$.
Such an optimal rational approximant must satisfy certain Hermite tangential  interpolation conditions;
for details we refer to \cite{Ant2010imr,Gugercin_Antoulas_Beattie:2008:IRKA}. 
  The \emph{Iterative Rational Krylov Algorithm} (\textsf{IRKA}) of Gugercin et al. \cite{Gugercin_Antoulas_Beattie:2008:IRKA}
   is a numerically effective iterative algorithm that constructs degree-$r$ rational approximatants satisfying the $\Hardy_2$-optimality conditions.  
   
 Our goal in this section is to repeat the success of \cite{Drmac-Gugercin-Beattie:VF-2014-SISC} for SISO systems, i.e., 
 improve the performance of \textsf{mimoVF} by  formulating the discrete \textsf{LS} measure as discretization of  the continuous $\Hardy_2$ error. Towards this goal, approximate the $\Hardy_2$ error with a quadrature role to obtain
{\small
 \begin{eqnarray}\label{eq:H2error_discretized}
\int_{-\infty}^{+\infty} \!\!\!\| \bfH(\imunit\omega)-\bfH_r(\imunit\omega)\|_F^2 d\omega &\approx& \!\!
\sum_{j=1}^\ell \rho_j^2  \|\bfH( \xi_j) - \bfH_r( \xi_j)\|_F^2 + \rho_{+}^2\,M_{+}[|\bfH-\bfH_r|^2] +\rho_{-}^2\,M_{-}[|\bfH-\bfH_r|^2]~~~~
\end{eqnarray} }
 where $M_{\pm}[G]$ are linear functionals of $G$ that capture information about asymptotic behavior of $G$ at $\pm\infty$.
Note that the usual \textsf{VF} formulation correspond to  $\rho_{+}=\rho_{-}=0$, with all other $\rho_j=1$,
and  choosing sampling nodes $\xi_j$ to be equidistant and in complex conjugate pairs. Thus, 
 the usual \textsf{VF} objective function is
as a composite trapezoid quadrature rule for the integral in (\ref{eq:H2error_discretized}) 
approximating the $\Hardy_2$ error. As we discussed in  \cite{Drmac-Gugercin-Beattie:VF-2014-SISC}, 
 more effective quadrature options may be considered, e.g., Gauss-Legendre, Gauss-Kronrod, and Gauss-Hermite quadrature rules.  We do not go into the details of what quadrature method to choose here; since our main point is just to illustrate that  \textsf{mimoVF} can perform much better once formulated as a discretized $\Hardy_2$ measure. 

\subsection{A numerical example}
Here, with a simple example, we illustrate the effect of choosing the sampling points and the weights using numerical quadrature.  We use the Clenshaw-Curtis type quadrature rule developed by Boyd \cite{Boyd-1987}. 
We use the ISS 1R module \cite{gugercin2001iss} with $m=3$ inputs and $p=3$ outputs. We  use only $100$ function evaluations ($\ell=200$) and apply
 \textsf{mimoVF} to this data set for different $r$ values. Resulting relative $\mathcal{H}_2$ errors are shown in Table \ref{tab:quad} below.
The quadrature-based selection yields the smallest error in each case; for $r=20$ and $r=30$, it leads to one order of magnitude improvements.
\begin{table}[hh]
\centering
\begin{tabular}{|l||c|c|c|c|} 
\hline
 ~\hspace{0.6cm}Method & $r=10$ & $r=20$ & $r=30$ & $r=40$ \\ \hline 
\textsf{mimoVF} without Quadrature &  $3.4104 \times 10^{-1}$ &  $3.1775 \times 10^{-1}$  & $1.2778 \times 10^{-1}$ & $ 5.0257 \times 10^{-2}$ \\ \hline 
\textsf{mimoVF} with\phantom{out} Quadrature  & $2.5209\times 10^{-1}$  & $4.6074 \times 10^{-2}$& $3.3226 \times 10^{-2}$  & $ 2.1436 \times 10^{-2}$\\ \hline 
\end{tabular}
\caption{Effect of quadrature nodes and weights on \textsf{mimoVF}}
\label{tab:quad}
\end{table}

\section{Controlling the McMillan Degree} \label{sec_rank1_VFMIMO}
Let $\{\respm_j\}_1^r\subset \IC^{p \times m}$ and $\{\lambda_j\}_1^r\subset \IC$ denote the final set of matrix residues and poles, respectively, resulting from \textsf{mimoVF}. To ease notational clutter, we drop the iteration index $k$. 
The associated rational matrix approximant can be represented as
 \begin{equation}\label{eq:VF-ss-realization}
\bfH_r(s) =  \sum_{j=1}^r \frac{\respm_j}{s-\pol_j} =  \left(\begin{smallmatrix}
 \Id_p &  \ldots & \Id_p
 \end{smallmatrix}\right)\!\! \left(\begin{smallmatrix}
 \frac{\Id_p}{s-\lambda_1} &  & \cr
 & \ddots & \cr  &  & \frac{\Id_p}{s-\lambda_r}
 \end{smallmatrix}\right) \!\! \left(\begin{smallmatrix}\respm_1 \cr \vdots \cr \respm_r \end{smallmatrix} \right)
 = \left(\begin{smallmatrix}\respm_1 & \ldots & \respm_r \end{smallmatrix}\right)\!\!
 \left(\begin{smallmatrix}
 \frac{\Id_m}{s-\lambda_1} &  & \cr
 & \ddots & \cr  &  & \frac{\Id_m}{s-\lambda_r}
 \end{smallmatrix}\right) \!\! \left(\begin{smallmatrix}
 \Id_m \cr  \vdots \cr \Id_m
 \end{smallmatrix}\right)
 \end{equation}
If $\bfH_r(s)$ has simple poles, then $\bfH_r(s)$ has nominal McMillan degree $\mathsf{deg}(\bfH_r(s))=\sum_{j=1}^r \mathsf{rank}(\respm_j)$. Evidently, $r\leq \mathsf{deg}(\bfH_r(s)) \leq r\min(p,m)$.  Note that $\mathsf{deg}(\bfH_r(s))$ will be strictly larger than $r$ unless the residues, $\respm_j$, \emph{all} have rank $1$, and indeed, $\mathsf{deg}(\bfH_r(s))$ can  be potentially much larger than $r$ if either the input space or output space has significant dimension.  McMillan degree is a proxy for the complexity involved in evaluating an approximant, therefore it is generally desirable and sometimes necessary to reduce the McMillan degree of a rational approximant to the target value of $r$, while retaining its approximating quality as much as possible. 
One straightforward method to accomplish this is to use truncation as suggested in \cite{Gustavsen-Semlyen-2004}: For each $j=1,\,\ldots,\,r$, find the best rank-one
approximation of $\respm_j \approx \ c_j\, b_j^T$ where $c_j\in \IC^{p}$ and $b_j \in \IC^{m}$. 
%
However, it is often the case that the residues $\respm_j$ are not close to rank--one matrices and so, truncation to rank-one residues can substantially increase the \textsf{LS} error.
 \cite{Gustavsen-Semlyen-2004} suggested using the 
\textsf{SVD} to determine the numerical ranks of the $\respm_j$s and truncating them to their respective best low-rank (not necessarily rank--one) approximations; 
%
%
{\cite{Gustavsen-Semlyen-2004} also proposed Gauss-Newton correction, but no details on how to proceed in this direction were provided.}

We propose here two different approaches toward retaining rank-$1$ residues, allowing us to achieve a true McMillan degree of $r$ while keeping the approximation quality as high as possible.  The first approach, presented in \S \ref{ss=ALS-correction}, is based on a nonlinear least-squares minimization, the second one, presented in \S \ref{SS::H2-res-corr}, combines \textsf{mimoVF} with well-established optimal systems-theoretic model reduction methodologies.


\subsection{Rank-one residue correction via Alternating Least Squares}\label{ss=ALS-correction}
We seek an optimal rational approximant, $\widehat{\bfH}_r(s)$, having the same poles, $\{\lambda_j\}_1^r$, as 
the \textsf{mimoVF} approximant, but taking the form
\begin{equation}\label{Hr_sum_cbt}
\widehat{\bfH}_r(s) = \sum_{j=1}^r \frac{c_j b_j^T}{s-\lambda_j} \equiv C (s\Id - \Lambda)^{-1} B^T,
\;\;\mbox{where}\;\;\Lambda=\mathrm{diag}(\lambda_j)_{j=1}^r,\;\;
\begin{array}{l}
C=\begin{pmatrix} c_1 &\ldots & c_r\end{pmatrix} \cr
B=\begin{pmatrix} b_1 &\ldots & b_r\end{pmatrix} .
\end{array}
\end{equation}
Optimality here will mean that $C$ and $B$ are chosen so that $\widehat{\bfH}_r(s)$ satisfies 
\begin{equation}\label{VFcorrect-LSerror}
\min_{C,\ B} \sum_{i=1}^\ell \| \sum_{j=1}^r  \frac{c_j b_j^T}{\xi_i-\lambda_j} - \bfH(\xi_i)\|_F^2  = 
\min_{C,\ B}  \| \underbrace{(\Cauchy \otimes \Id_p)}_{\mathcal{M}} \begin{pmatrix} c_1 b_1^T \cr \vdots \cr c_rb_r^T\end{pmatrix} - 
\begin{pmatrix} \bfH(\xi_1) \cr \vdots \cr \bfH(\xi_\ell) \end{pmatrix}\|_F^2.
\end{equation}
A weighting factor $\qw_i>0$ can also be attached to
	each sample; yet for simplicity of presentation, we take all $\qw_i=1$. 
In (\ref{VFcorrect-LSerror}), $\Cauchy\in\Cplx^{\ell\times r}$ denotes the Cauchy matrix $\Cauchy_{ij}=1/(\xi_i-\lambda_j)$.
In many applications, $\widehat{\bfH}_r(s)$ should be real-valued for real-valued $s$.  
In that case, a constraint is added that the poles $\lambda_j$ and the residues
$c_j b_j^T$ must be closed under conjugation: all non-real poles appear in complex conjugate pairs, say $\lambda_j,\lambda_{j+1}=\cc{\lambda_j}$;  $c_j$, $b_j$ are real if $\lambda_j$ is real, otherwise 
$c_{j+1}=\cc{c_j}$, $b_{j+1}=\cc{b_j}$. 

The nonlinearity of the \textsf{LS} error (\ref{VFcorrect-LSerror}) with respect to the variables, $B$ and $C$,
can be evaded by reformulating the problem in terms of alternating least squares (\textsf{ALS}): if $B$ (alternatively, $C$) is fixed, then (\ref{VFcorrect-LSerror}) becomes a linear least squares problem  in terms of $C$ (alternatively, $B$). 
So, we minimize alternately with respect to $C$ (holding $B$ fixed) and then with respect to $B$ (holding $C$ fixed), repeating the cycle until convergence.  We provide some algorithmic details below 
and illustrate the effectiveness of \textsf{ALS} iteration 
with an example.   An analogous approach has been used for ``residue correction" in realization-independent (data-driven) approaches to optimal $\Hardy_2$ model reduction \cite{Beattie_Gugercin::RealizationIndependent}.
%
\paragraph{\textbf{Correction of $C$}}
Assume $B$ is fixed and seek an updated $C$ (with conforming conjugation symmetry) that
will minimize the \textsf{LS} error (\ref{VFcorrect-LSerror}).
 To that end, we vectorize the error matrix column-wise and write the 
$k$th column ($k=1,\ldots, m$)  of the residual matrix in (\ref{VFcorrect-LSerror}) as
\begin{equation}\label{VFcorrect-LSerror-v}
\mathcal{M} \begin{pmatrix} (b_1)_k\Id &   &    \cr 
  &  \ddots  &    \cr
  &    &  (b_r)_k \Id \end{pmatrix} \begin{pmatrix} c_1 \cr \vdots \cr c_r\end{pmatrix}
  - \begin{pmatrix} \bfH(\xi_1)e_k \cr \vdots \cr \bfH(\xi_\ell) e_k \end{pmatrix} = 
  \mathcal{M} (\mathrm{diag}(B(k,:)) \otimes \Id_p)
  \begin{pmatrix} c_1 \cr \vdots \cr c_r\end{pmatrix}
  - \begin{pmatrix} \bfH(\xi_1)e_k \cr \vdots \cr \bfH(\xi_\ell) e_k \end{pmatrix} .
\end{equation}
Stacking  all columns together, the problem becomes: minimize the Euclidean norm of
the residual 
\begin{equation}\label{VFresidualVectorized}
\begin{pmatrix}
\mathcal{M} (\Delta_1 \otimes \Id_p) \cr \hline
\vdots \cr \hline 
\mathcal{M} (\Delta_m \otimes \Id_p) 
\end{pmatrix}
\begin{pmatrix} c_1 \cr \vdots \cr c_r\end{pmatrix} - 
\begin{pmatrix} 
\bfH(\xi_1)e_1 \cr \vdots \cr \bfH(\xi_\ell)e_1 \cr \hline
\vdots \cr \hline 
 \bfH(\xi_1)e_m \cr \vdots \cr \bfH(\xi_\ell)e_m
\end{pmatrix},\;\; \mbox{where}\;\;\Delta_i = \mathrm{diag}(B(i,:)),\;\;i=1,\ldots, m, 
\end{equation}
with conjugation symmetry constraints: $c_j$ is real if $\lambda_j$ is real and
$c_{k}=\cc{c_j}$, if $\lambda_{k}=\cc{\lambda_j}$. 
Note that the number of rows above is $\ell \cdot m \cdot p$, and the number of unknowns
is $p\cdot r$. In practice, $\ell$ is much larger than $m, p, r$, and it is always assumed that $\ell\geq 2r$.

Let $\Cauchy = Q \left(\begin{smallmatrix} R \cr 0\end{smallmatrix}\right) = \widehat{Q}R$ be the QR factorization,
where $Q=\begin{pmatrix} \widehat{Q} & \breve{Q}\end{pmatrix}$, $\widehat{Q}=Q(:,1:r)$.
Then $\mathcal{M} = (Q\otimes\Id_p) (\left(\begin{smallmatrix} R \cr 0\end{smallmatrix}\right)\otimes \Id_p)$
is the QR factorization of $\mathcal{M}$. Multiplying the blocks in the residual (\ref{VFresidualVectorized})
by $(Q^* \otimes \Id_p)$ and using the block-partitioned structure of $Q$, we obtain an equivalent \textsf{LS} problem:

\begin{equation}
\left\| \begin{pmatrix}
( R \otimes \Id_p) (\Delta_1 \otimes \Id_p) \cr 0 \cr \hline
\vdots \cr \hline 
( R\otimes \Id_p) (\Delta_m \otimes \Id_p) \cr
0
\end{pmatrix}
\begin{pmatrix} c_1 \cr \vdots \cr c_r\end{pmatrix} - 
\begin{pmatrix} 
(\widehat{Q}^* \otimes \Id_p) \mathrm{vec}(\HT(:,1,:))\cr 
(\breve{Q}^* \otimes \Id_p) \mathrm{vec}(\HT(:,1,:))\cr\hline
\vdots \cr \hline 
(\widehat{Q}^* \otimes \Id_p)\mathrm{vec}(\HT(:,m,:)) \cr
(\breve{Q}^* \otimes \Id_p)\mathrm{vec}(\HT(:,m,:))
\end{pmatrix} \right\|_F \longrightarrow \min.
\end{equation}
The blocks $(\breve{Q}^* \otimes \Id_p)\mathrm{vec}(\HT(:,i,:))$, $i=1,\ldots, m$, in the
right-hand side constitute a part of the residual that cannot be influenced with any choice of
the $c_j$s and the corresponding $(\ell-r)\cdot p \cdot m$ equations (with the corresponding zero rows in the
coefficient matrix) are dropped, i.e., only the thin QR factorization $\Cauchy=\widehat{Q}R$ is needed.
This reduces the row dimension of the problem from $\ell\cdot p\cdot m$ to $r\cdot p\cdot m$.
Using the properties of the Kronecker product, we can further simplify it to 
\begin{equation}\label{LS_stackedRs}
\left\| \left[\begin{pmatrix}
 R \Delta_1 \cr \hline
\vdots \cr \hline 
R\Delta_m 
\end{pmatrix} \otimes \Id_p \right]
\begin{pmatrix} c_1 \cr \vdots \cr c_r\end{pmatrix} - 
\begin{pmatrix} 
(\widehat{Q}^* \otimes \Id_p) \mathrm{vec}(\HT(:,1,:))\cr 
\hline
\vdots \cr \hline 
(\widehat{Q}^* \otimes \Id_p)\mathrm{vec}(\HT(:,m,:)) 
\end{pmatrix} \right\|_F \longrightarrow \min.
\end{equation}
To solve (\ref{LS_stackedRs}) we compute the QR factorizations
\begin{equation}
R_{\boxminus} = U \begin{pmatrix} T \cr 0\end{pmatrix},\;\; R_{\boxminus}\otimes \Id_p = (U\otimes \Id_p)
(\begin{pmatrix} T \cr 0\end{pmatrix}\otimes \Id_p),\;\;
\mbox{where}\;\; R_{\boxminus} = \begin{pmatrix}
R \Delta_1 \cr \hline
\vdots \cr \hline 
R\Delta_m 
\end{pmatrix} \in\Cplx^{m\cdot r \times r} ,
\end{equation}
and, using the partition $U=\begin{pmatrix} \widehat{U} & \breve{U}\end{pmatrix}$, we reduce the
problem to solving the triangular system
\begin{equation}\label{eq:TkronI-system}
(T\otimes\Id_p) \begin{pmatrix} c_1 \cr \vdots \cr c_r\end{pmatrix} = 
(\widehat{U}^* \otimes \Id_p)
\begin{pmatrix} 
(\widehat{Q}^* \otimes \Id_p) \mathrm{vec}(\HT(:,1,:))\cr 
\hline
\vdots \cr \hline 
(\widehat{Q}^* \otimes \Id_p)\mathrm{vec}(\HT(:,m,:)) 
\end{pmatrix} .
\end{equation}
Note that only the thin QR factorization $R_{\boxminus} = \widehat{U} T$ is needed.
Folding the unknowns back into the structure of $C$ we obtain, using that 
$(\widehat{Q}^* \otimes \Id_p) \mathrm{vec}(\HT(:,i,:)) = \mathrm{vec}(\HT(:,i,:)\widehat{Q}^{*T})$,
\begin{eqnarray}
\mathrm{vec} ( C T^T )  &=&  (\widehat{U}^* \otimes \Id_p) \begin{pmatrix} 
\mathrm{vec}(\HT(:,1,:)\widehat{Q}^{*T})\cr 
\hline
\vdots \cr \hline 
\mathrm{vec}(\HT(:,m,:)\widehat{Q}^{*T})
\end{pmatrix}
 = (\widehat{U}^* \otimes \Id_p)
 \mathrm{vec}(\begin{pmatrix} \HT(:,1,:)\widehat{Q}^{*T} & \ldots & \HT(:,m,:)\widehat{Q}^{*T}\end{pmatrix}) \nonumber \\
 &=& \mathrm{vec}(\begin{pmatrix} \HT(:,1,:)\widehat{Q}^{*T} & \ldots & \HT(:,m,:)\widehat{Q}^{*T}\end{pmatrix}\widehat{U}^{*T}).
\end{eqnarray}
As an alternative to solving (\ref{eq:TkronI-system}), $C$ can be computed efficiently as the solution of a triangular matrix equation. The formula $C = \begin{pmatrix} \HT(:,1,:)\widehat{Q}^{*T} & \ldots & \HT(:,m,:)\widehat{Q}^{*T}\end{pmatrix}\widehat{U}^{*T} T^{-T}$ is rich in BLAS 3 operations and can be highly optimized. Finally, we note that the QR factorizations involved can be done with pivoting, but we omit details for the sake of simplicity.
\paragraph{\textbf{Correction of $B$}}
If the matrix $C$ is fixed and we want to update $B$, we use the preceding procedure, with a few
simple modifications. First, transpose the residuals at each $\xi_i$ to get
$\sum_{j=1}^r  \frac{b_j c_j^T}{\xi_i-\lambda_j} - \bfH(\xi_i)^T$. As a consequence, swap the roles of
the $c_j$s and the $b_j$s, and use $\HT(i,:,:)$ instead of $\HT(:,i,:)$. The rest follows
\emph{mutatis mutandis}. 


\paragraph{\textbf{A numerical example}}\label{SS=ALSex}
\label{NumEx-1}
We illustrate the usefulness of the \textsf{ALS} correction process in building a final approximant $\widehat{\bfH}_r$ that has exact McMillan degree $r$.   We use the data of Example \ref{NumEx-01}, and the output of \textsf{mimoVF} after the second iteration. The simple truncation of the residue matrices causes the \textsf{LS} error jump
from $\gamma\approx 6.45 \cdot 10^{-3}$ to $\gamma\approx 2.72$, and one step of 
\textsf{ALS} correction reduces it down to $\gamma\approx 1.41 \cdot 10^{-2}$. 
This improvement is evident in Figure \ref{FIG-compare-to-vf3-ISS270_oneiter_tr}. 
%
%
\begin{figure}
	\begin{center}
		\includegraphics[width=2.9in,height=2.8in]{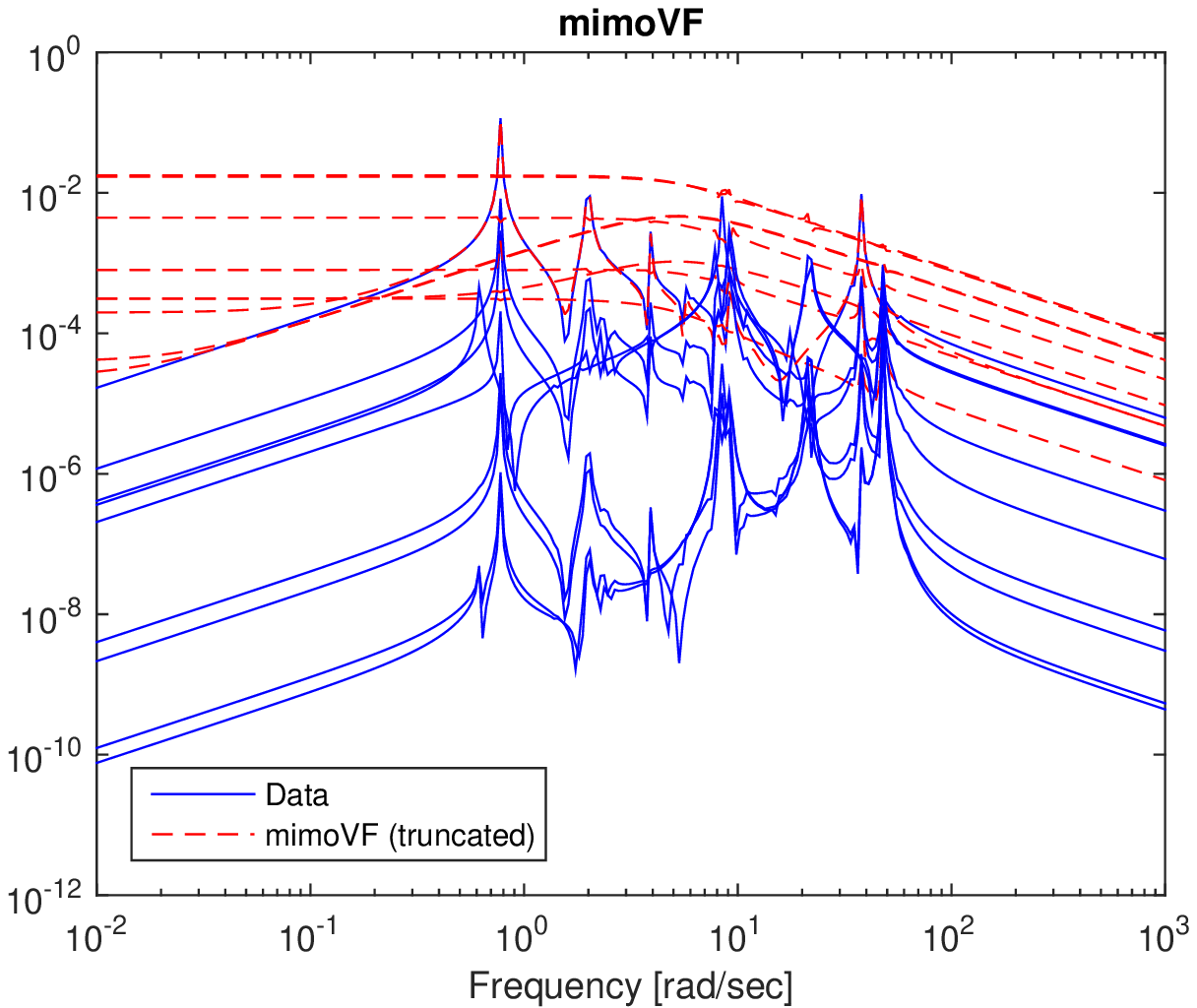}
		\includegraphics[width=2.9in,height=2.8in]{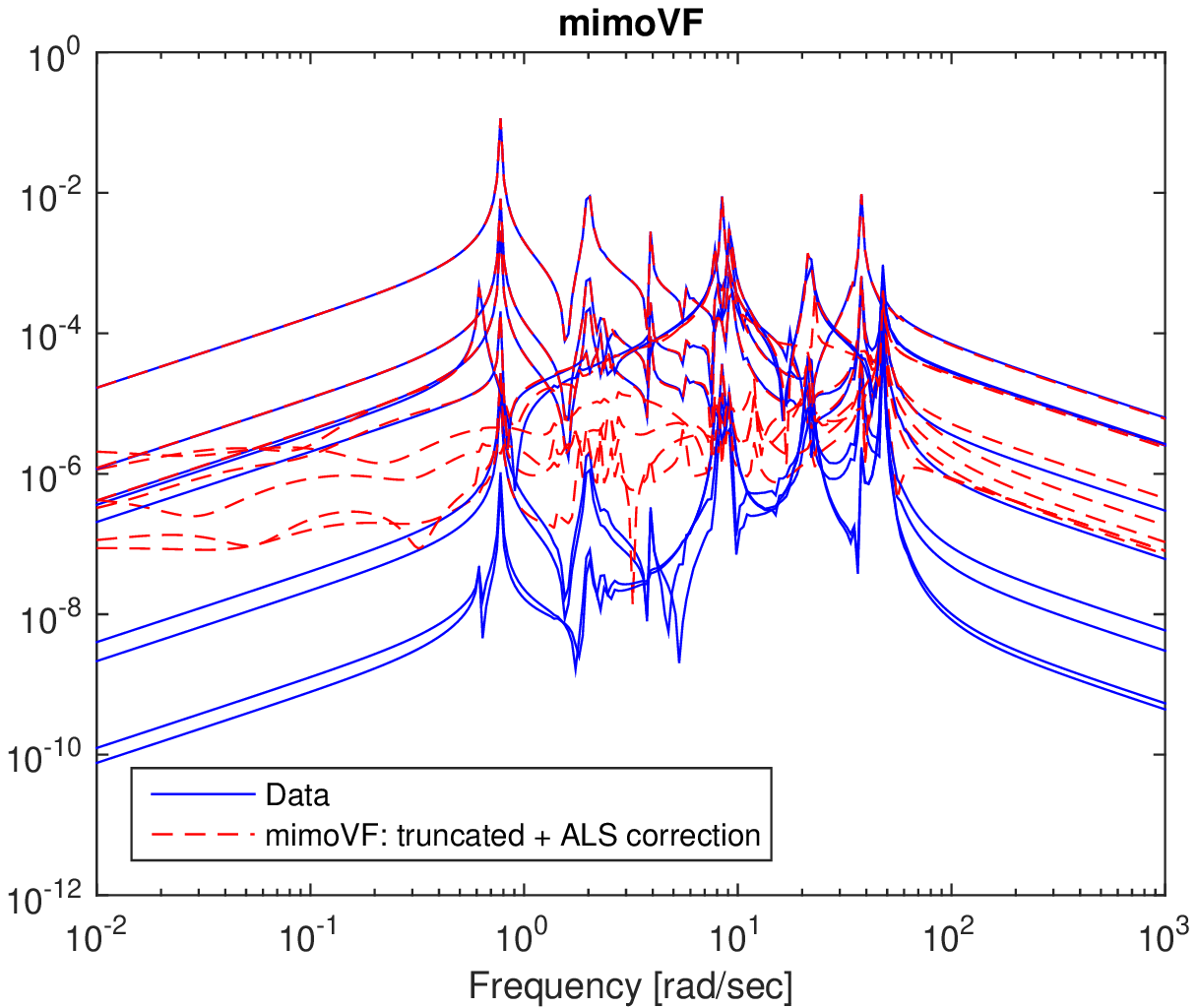}
	\end{center}
	\caption{\label{FIG-compare-to-vf3-ISS270_oneiter_tr} (Example \ref{NumEx-1}.) Illustration of the truncation of \textsf{mimoVF} and the \textsf{ALS} correction. (Truncation applied to the \textsf{mimoVF} output shown on the right graph on Figure \ref{FIG-compare-to-vf3-ISS270_2iter}. Only one \textsf{ALS} iteration is used.) The relative $\Hardy_2$ error of $\widehat{\bfH}_r$ is $\chi\approx 1.50e-01$.
	}
\end{figure} 


\subsection{Rank-one residue correction via $\Hardy_2/\Hardy_{\infty}$ model reduction approaches}\label{SS::H2-res-corr}
The \textsf{ALS} iteration described above is built purely upon algebraic least squares error minimization. 
However, if the underlying context relates the rational approximants to dynamical systems, 
then it may be advantageous to perform this reduction using well-developed systems-theoretic reduction tools. 
Recasting our rank-one residue correction problem into this setting, we consider 
constructing an $r$-th order system $\widehat{\bfH}_r$ that closely
approximates the \textsf{VF} computed model $\bfH_r$ in some appropriate system norm.

The $\Hardy_2$ norm discussed in \S \ref{SS:quad} is the most natural choice and the first one we consider. This approach is compelling  when weights and nodes in \textsf{mimoVF} are chosen using an appropriate quadrature as in \S \ref{SS:quad} and the algebraic \textsf{LS} measure is viewed as a discretized $\Hardy_2$ measure. In this case, the complete procedure; both  \textsf{mimoVF} step and reduction to true McMillan degree-$r$ will be 
performed with the $\Hardy_2$ system norm in mind. To achieve this goal, i.e., to minimize  $\|\bfH_r - \widehat{\bfH}_r \|_{\Hardy_2}$ over all stable $r$th order $\widehat{\bfH}_r$, we will apply the optimal $\Hardy_2$ approximation method  \textsf{IRKA} of \cite{Gugercin_Antoulas_Beattie:2008:IRKA} as modified in \cite{Beattie_Gugercin::RealizationIndependent} for a realization independent procedure.\footnote{It is not required that the underlying transfer function is rational.}  Note that the output of \textsf{mimoVF}, $\bfH_r(s)$,  has the McMillan degree up to $r \min(p,m)$; thus $\bfH_r(s)$ can have a modest state-space dimension. If, for example, $r=80$ and $m=p=50$, $\bfH_r$ can have degree as high as $4000$. Thus, it is important to perform this second reduction step effectively.
 A particularly attractive aspect of the \textsf{IRKA} framework of \cite{Beattie_Gugercin::RealizationIndependent} is that 
 it  needs only function and derivative evaluations at dynamically generated points. This works perfectly in our setting since the explicit state-space form of the \textsf{mimoVF} output in (\ref{eq:VF-ss-realization}) makes these computations 
 trivial. This approach can be viewed as a data driven implementation of the \textsf{IRKA} -- the measurements are fed into  \textsf{mimoVF} to produce an intermediate model, a surrogate of the order  $\widetilde{r}\geq r$, based on measurements, and then this intermediate model is reduced by \textsf{IRKA} to its locally best $r$th order approximant. 

The $\Hardy_\infty$ norm is another commonly used system norm. For a stable dynamical system with transfer function $\bfH(s)$, the $\Hardy_\infty$ norm is defined as ${\| \bfH\|_{\Hardy_\infty} = \sup_{\omega \in \IR} 
\| \bfH(\imunit \omega)\|_2}$. For details, we refer the reader to \cite{ZhDG96}.  The commonly used approach to model reduction towards obtaining a small $\Hardy_\infty$ error measure is Balanced Truncation (\textsf{BT}) \cite{moore1981principal,mullis1976synthesis}. Even though \textsf{BT} requires solving two Lyapunov equations, once again particular state space realization in (\ref{eq:VF-ss-realization}) allows straightforward solution of the Lyapunov equations and makes the \textsf{BT} related computations cheap. Thus, we may also employ \textsf{BT} in reduction 
to true McMillan degree-$r$ without much additional computational cost.

\subsection{An aggregate procedure: \bdgfit} Our overall approach to adapting \textsf{VF} to matrix-valued rational approximation 
consists first of the \textsf{mimoVF} process (described in detail in \S \ref{S=Background} and \S \ref{S=NumericalDetails}), 
 followed by a post-processing step that performs the reduction to true McMillan degree-$r$ with minimal loss of fidelity. 
This post processing stage can be performed either by the \textsf{ALS} correction of \S \ref{ss=ALS-correction} or by systems-theoretic approaches such as  \textsf{IRKA} or \textsf{BT} as described in \S\ref{SS::H2-res-corr}. 
We will refer to this two-step process as \bdgfit.

%

\subsection*{Numerical Examples}\label{SS=bdgfit}

Here, we illustrate the performance of  \bdgfit \ with four numerical examples. In each case, we investigate the effect of the methodology employed in the post-processing stage
on the overall approximation quality. 
We also compare the final models produced by \bdgfit \ with the 
optimal-$\Hardy_2$ approximations obtained by \textsf{IRKA}.

\subsubsection{Heat Model}  \label{sec:heat_rank1}

We consider the Heat Model from the \textsf{NICONET} Benchmark collection \cite{NICONET-report}; the model has $m=2$ inputs and 
$p=2$  outputs.   We use only $20$ function evaluations ($\ell=40$ samples due to complex conjugacy) and obtain rational approximations of order $r=6$ and $r=10$. Table \ref{table:heat_rank1} lists the resulting relative $\Hardy_2$ errors due to different approaches.  The first row is the error due to the output of \textsf{mimoVF}. Note that this approximation has order $r\times m=2r$ since it has full-rank residues.  This is not our final approximation and is included here as a reference point. We obtain a true degree-$r$ approximant using four different approaches: (i) simple rank-$1$ truncation of the residues by SVD, (ii) \textsf{ALS} correction of \S \ref{ss=ALS-correction}, (iii) \textsf{IRKA} on the degree-$2r$ output of  \textsf{mimoVF} to reduce it to degree-$r$ (iv)  \textsf{BT} on the degree-$2r$ output of \textsf{mimoVF}  to reduce it to degree-$r$. These four methods are labeled, respectively, as \bdgfit-(\textsf{Trnct}), \bdgfit-(\textsf{ALS}), \bdgfit-(\textsf{IRKA}), and \bdgfit-(\textsf{BT}).  The first observation is that the simple rank-$1$ truncation of the residues by SVD leads to a high loss of accuracy compared to the \textsf{ALS} correction; this is most apparent in the $r=10$ case where \bdgfit-(\textsf{Trnct}) has one order of magnitude higher error than 
 \bdgfit-(\textsf{ALS}). For both cases, \bdgfit-(\textsf{IRKA}) and \bdgfit-(\textsf{BT}) perform extremely well (especially \bdgfit-(\textsf{IRKA})) and even with a true degree-$r$ approximant, they almost match the accuracy of  the degree-$2r$ \textsf{mimoVF} approximant; i.e., reduction from $2r$ to $r$ causes a negligible  loss of  accuracy. The last row indicates the relative $\Hardy_2$ error  associated with the optimal approximant 
 from \textsf{IRKA}. As expected, \textsf{IRKA} yields smaller error; we do not anticipate to beat the continuous optimal approximation via a discretized least-square measure. However,  it is important to note that  \bdgfit-(\textsf{IRKA}) and \bdgfit-(\textsf{BT}) with only 
 $20$ function evaluation yield results close to those obtained by \textsf{IRKA}; this is  especially true for $r=10$.
\begin{table}[hhh]
\centering
\begin{tabular}{|l||c|c|} 
\hline
 ~\hspace{0.6cm}Method & $r=6$ & $r=10$ \\ \hline 
 \textsf{mimoVF}  ({\bf degree: $2r$})& $1.6530\times 10^{-2}$ & $ 1.0759 \times 10^{-3}$  \\ \hline  \hline
 \bdgfit-(\textsf{Trnct}) & $3.7022\times 10^{-2}$ & $ 2.6137 \times 10^{-2}$\\ \hline 
\bdgfit-(\textsf{ALS}) &  $1.8218\times 10^{-2}$& $ 2.8774 \times 10^{-3}$ \\ \hline 
\bdgfit-(\textsf{IRKA}) & $ {1.7359 \times10^{-2}} $ & $ 1.1686 \times 10^{-3}$ \\ \hline 
 \bdgfit-(\textsf{BT}) & $ 3.0604\times10^{-2} $ & $  1.1591 \times 10^{-3}$\\ \hline  \hline
\textsf{IRKA}  &  $ 8.5566\times10^{-3} $  & $ 1.0925 \times 10^{-3}$\\ \hline
\end{tabular}

\vspace{1ex}
\caption{The relative $\Hardy_2$ errors due to \textsf{mimoVF}, \bdgfit \ and \textsf{IRKA}. 
$20$ function evaluations  } 
\label{table:heat_rank1}
\end{table}

\subsubsection{ISS-1R Module}  \label{sec:iss_rank1}
We repeat the above studies for the ISS 1R module \cite{gugercin2001iss} with $m=3$ inputs and $p=3$ outputs.
We use  $100$ function evaluations ($\ell=200$ samples) and obtain rational approximations of order $r=20$ and $r=30$.
 Table \ref{table:iss1r_rank1} depicts the resulting relative $\Hardy_2$ error values for the same methods used in the previous example in \S \ref{sec:heat_rank1}. As in the previous example, both \bdgfit-(\textsf{IRKA}) and \bdgfit-(\textsf{BT}) yield very accurate results and show negligible loss of accuracy in reduction from the intermediate $3r$ approximant to the final degree-$r$ approximant; the order of the \textsf{mimoVF} approximant is reduced three-fold yet not much accuracy is lost.  \bdgfit-(\textsf{IRKA}) and \bdgfit-(\textsf{BT})   again yield approximation errors close to that of \textsf{IRKA}. The main difference from the previous case is that in this case  even the \textsf{ALS} correction suffers from the loss of accuracy as the simple truncation approach. 
  
\begin{table}[hhh]
\centering
\begin{tabular}{|l||c|c|} 
\hline
 ~\hspace{0.6cm}Method & $r=20$ & $r=30$ \\ \hline 
 \textsf{mimoVF}  ({\bf degree: $3r$})& $ 4.6074\times 10^{-2}$ & $ 3.3226  \times 10^{-2}$  \\ \hline  \hline
 \bdgfit-(\textsf{Trnct}) & $ 1.2904 \times 10^{-1}$ & $ 1.2508 \times 10^{-1}$\\ \hline 
\bdgfit-(\textsf{ALS}) &  $ 1.2558 \times 10^{-1}$& $ 1.2223 \times 10^{-1}$ \\ \hline 
\bdgfit-(\textsf{IRKA}) & $ { 7.7305 \times10^{-2}} $ & $ 4.4757 \times 10^{-2}$ \\ \hline 
 \bdgfit-(\textsf{BT}) & $7.7457 \times10^{-2} $ & $  3.3483 \times 10^{-2}$\\ \hline  \hline
\textsf{IRKA}  &  $6.7779 \times10^{-2} $  & $1.1423  \times 10^{-2}$\\ \hline
\end{tabular}

\vspace{1ex}
\caption{The relative $\Hardy_2$ errors due to \textsf{mimoVF}, \bdgfit \ and \textsf{IRKA}.
$100$ function evaluations 
}
\label{table:iss1r_rank1}

\end{table}

\subsubsection{ISS-12A Module}  \label{sec:iss12a_rank1}
We now investigate the larger ISS 12A module \cite{gugercin2001iss} with $m=3$ inputs and $p=3$ outputs. We focus on this problem since the underlying system of degree $n=1412$ is very hard to approximate with a lower order system; it presents significant challenges to model reduction, not necessarily from a computation perspective but from an approximation quality perspective. 
As  illustrated in  \cite{gugercin2001iss}, the Hankel singular values decay rather slowly, so to obtain a reduced model with a relative error tolerance of $10^{-3}$, one needs a reduced model of order at least $226$ even using balanced truncation. We use $250$ function evaluation, obtain rational approximations of order  $r=80$ using  \bdgfit,  and compare the result with the continuous optimal approximation \textsf{IRKA}. 
 Table \ref{table:iss12a} depicts the resulting relative $\Hardy_2$ error values. As before, both \bdgfit-(\textsf{IRKA}) and \bdgfit-(\textsf{BT})  show negligible loss of accuracy in reduction from the intermediate $3r$ approximant to the final degree-$r$ approximant and have approximation errors close to that of \textsf{IRKA}. 
    \bdgfit-(\textsf{Trnct}) and \bdgfit-(\textsf{ALS}) perform reasonably well as well in this case with \bdgfit-(\textsf{Trnct}) having the largest error among the four. 
\begin{table}[hhh]
\centering
\begin{tabular}{|l||c|c|} 
\hline
 ~\hspace{0.6cm}Method & $r=80$   \\ \hline 
 \textsf{mimoVF}  ({\bf degree: $3r$})& $ 1.4678\times 10^{-1}$    \\ \hline  \hline
  \bdgfit-(\textsf{Trnct}) & $ 2.4549 \times 10^{-1}$ \\ \hline 
\bdgfit-(\textsf{ALS}) &  $ 2.2075 \times 10^{-1}$ \\ \hline 
\bdgfit-(\textsf{IRKA}) & $ { 1.5116 \times10^{-1}} $  \\ \hline 
 \bdgfit-(\textsf{BT}) & $1.5130 \times10^{-1} $  \\ \hline  \hline
\textsf{IRKA}  &  $1.1317 \times10^{-1} $   \\ \hline
\end{tabular}

\vspace{1ex}
\caption{The relative $\Hardy_2$ errors due to \textsf{mimoVF}, \bdgfit \ and \textsf{IRKA}.
$250$ function evaluations 
}
\label{table:iss12a}

\end{table}

\subsubsection{A power system example with large input/output space}  \label{sec:4646}
This example results from  small-signal stability studies for large power systems. The specific model we consider here is one of the models from the Brazilian Interconnect Power
System (BIPS); refer to \cite{Rommes2007}\footnote{This model can be downloaded from  {\tt https://sites.google.com/site/rommes/software}} for details.

The underlying dynamical system has dimension $n=13250$ with $m=46$ inputs and $p=46$ outputs. We apply
\bdgfit-(\textsf{BT})  to obtain our rational approximant. We use $\ell=200$ frequency samples.  The \textsf{mimoVF} (first step of \bdgfit) is applied with $r=40$. Due to  $m=p=46$, the output of \textsf{mimoVF} has an effective
McMillan degree of $r\times m = 1840$.  Note that this is only marginally a reduced model, having a McMillan degree roughly $14\%$ of the originally system order. The decay of the leading Hankel singular values of the intermediate model is shown in the upper plot of Figure \ref{fig:4646}. Note the slow decay;  even after the $300^{\rm th}$ one,  the normalized Hankel singular values  are still above the threshold of $10^{-4}$. This system is difficult to reduce. Indeed, in earlier works, even for simpler versions of the model having a smaller number of input and outputs (such as $m=p=28$), a reduced model of degree $291$ was used; see \cite{Rommes2006}. We choose the final degree to be a point at which the normalized Hankel singular values have decayed below $2\times 10^{-4}$, leading to a  final McMillan degree of $253$ (around $2\%$ of the original). The sigma plots, i.e., $\| \bfH(\imunit \omega)\|_2$ vs $\omega \in \IR$, for  the full model and the final  \bdgfit~approximant are shown in the lower plot of Figure \ref{fig:4646}. As the figure illustrates, the \bdgfit~approximant does an excellent job in approximating the underlying dynamics.
\begin{figure}
	\begin{center}
\includegraphics[width=2.9in,height=2.9in]{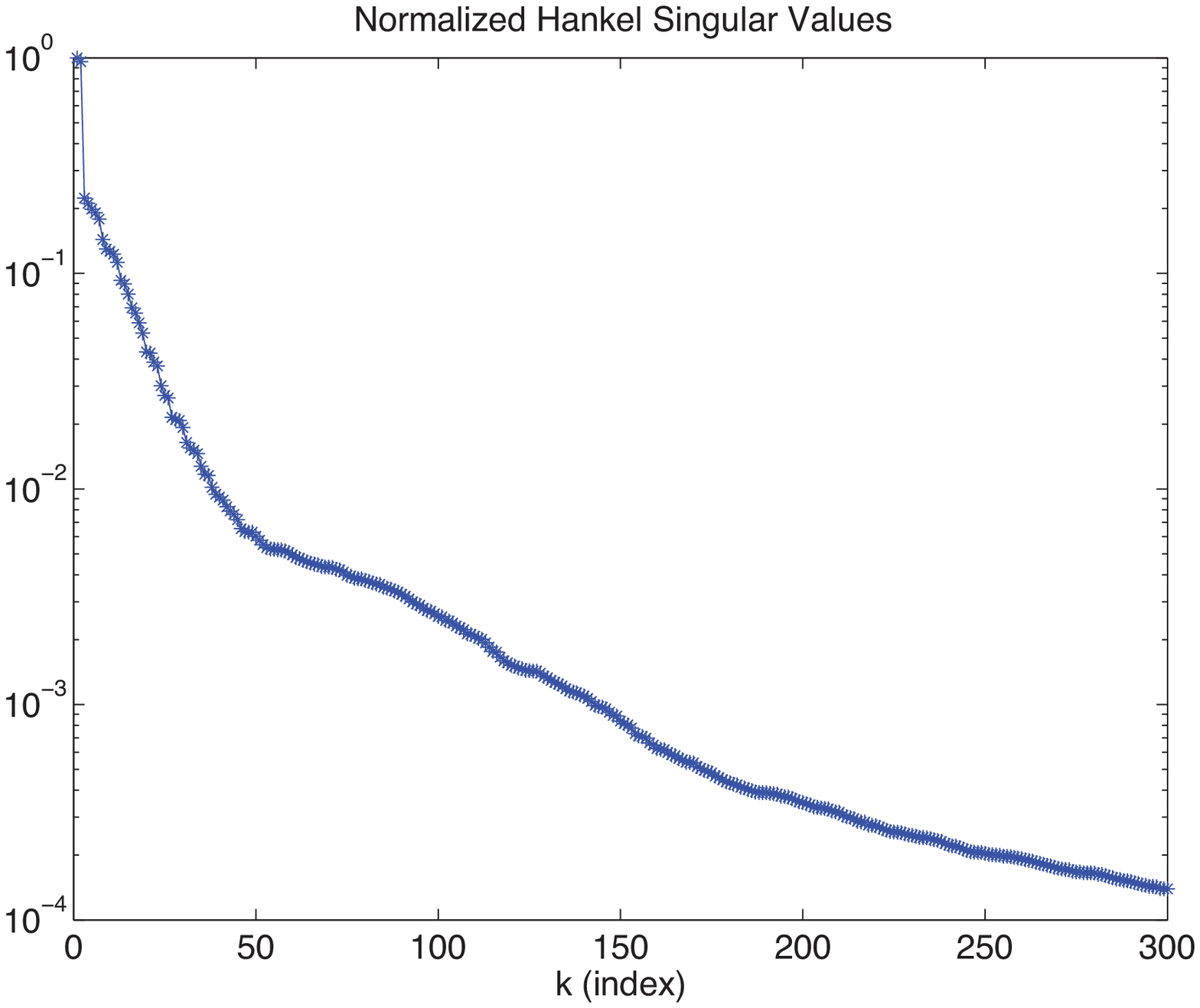} 
\includegraphics[width=2.9in,height=2.9in]{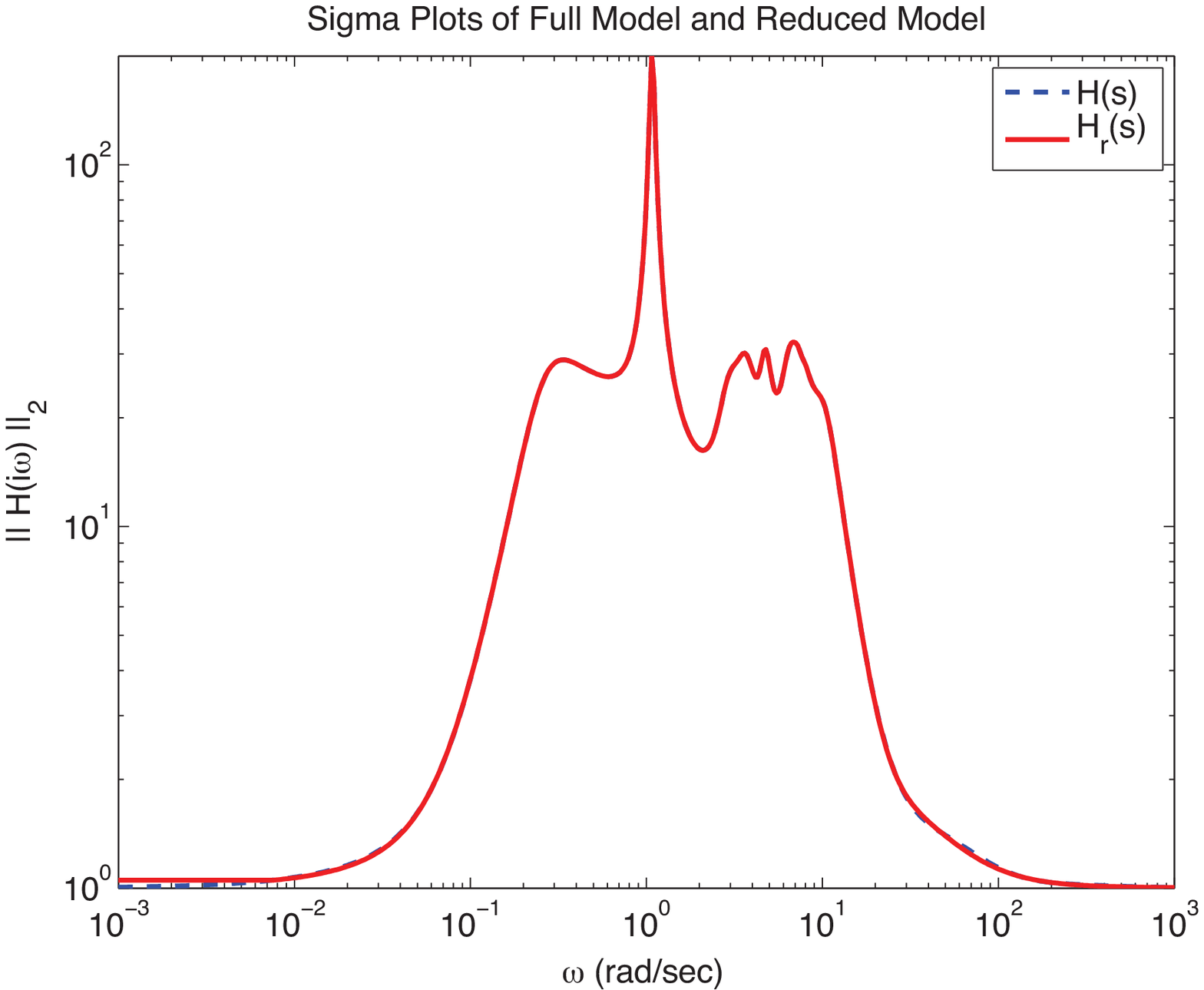} 
	\end{center}
	\caption{\label{fig:4646} (Example \ref{sec:4646}.) Large-scale power system example: Left plot: Decay of the Hankel singular values 
	Right plot:  The sigma plots of the full and reduced model.
	}
\end{figure}

%
%


\section{Conclusions}
The \textsf{VF} method, as originated by Gustavsen and Semlyen,
 has been and continues to be an important tool 
for rational approximation and many authors have applied, modified, and analyzed this approach.  Although their method is based on successive solution of linear \textsf{LS} problems, which are well understood problems in of themselves, we find that subtleties enter that can degrade both the performance and accuracy of current \textsf{VF} approaches, especially for matrix-valued rational approximation of modest dimension which often produce extremely poorly conditioned problems. 
 These issues include 
 \begin{enumerate}
 \item[1)] balancing the potential conflict between rank revealing pivoting in QR factorizations used in 
\textsf{LS} solvers and column-scaling used to improve conditioning; 
\item[2)] avoiding redundant computation within the multiple subproblems solved as part of the large \textsf{LS} problem that arises; 
\item[3)] the need for rigorous termination criteria and the efficient recovery of the best possible rational approximant in case the iteration must be terminated prematurely; 
\item[4)] the use of regularized least squares and the discrepancy principle, both implemented using high accuracy linear algebra methods; and
\item[5)] control of the McMillan degree of the resulting rational approximant.  
\end{enumerate}
 In this paper, we have considered these issues carefully, together with other more minor ones.  
 We have integrated these developments into a robust, efficient implementation of \textsf{VF} for matrix-valued rational approximation, called \textsf{mimoVF}, that appears to be both faster and more accurate than currently available implementations.  
Further, we have connected the underlying discrete \textsf{LS} approximation problem to a continuous optimal $\Hardy_2$ approximation problem through numerical quadrature, which motivated a reformulation of the original \textsf{VF} objective as a weighted \textsf{LS} problem.    For essentially the same cost as \textsf{mimoVF}, we were then able to use this reformulation to significantly improve the quality of the approximation.   
Finally, we have offered here an aggregate procedure, called \bdgfit,~that
 combines \textsf{mimoVF} with $\Hardy_2$/$\Hardy_\infty$-based approximation methods that yield high-fidelity rational approximants with low McMillan degree.

\bibliography{ModRed}
\bibliographystyle{siam}


\end{document}